\newtheorem{theorem}{Theorem}[section]
\newtheorem{corollary}[theorem]{Corollary}
\newtheorem{lemma}[theorem]{Lemma}
\newtheorem{proposition}[theorem]{Proposition}
\newtheorem*{theorem*}{Theorem}
\newtheorem*{corollary*}{Corollary}
\newtheorem{introtheorem}{Theorem}
\Crefname{introtheorem}{Theorem}{Theorems}
\theoremstyle{definition}
\newtheorem{definition}[theorem]{Definition}
\newtheorem{setup}[theorem]{Setup}
\newtheorem*{conjecture*}{Conjecture}
\theoremstyle{remark}
\newtheorem{remark}[theorem]{Remark}
\newenvironment{clm}{\emph{Claim.}}{}
\newenvironment{clmproof}{\emph{Proof of the Claim.}}{}
\newtheorem*{acknowledgment}{Acknowledgment}
\newenvironment{sketch}[1][Sketch]{\par
\pushQED{\qed}%
\normalfont \topsep6\p@\@plus6\p@\relax
\trivlist
\item\relax
{\itshape
#1\@addpunct{.}}\hspace\labelsep\ignorespaces
}{%
\popQED\endtrivlist\@endpefalse
}
\newcommand{\cC}{\mathcal{C}}
\newcommand{\cE}{\mathcal{E}}
\newcommand{\cP}{\mathcal{P}}
\newcommand{\sX}{\mathscr{X}}
\newcommand{\sF}{\mathscr{F}}
\newcommand{\sG}{\mathscr{G}}
\newcommand{\kk}{\Bbbk}
\newcommand{\NN}{\mathbb{N}}
\newcommand{\ZZ}{\mathbb{Z}}
\newcommand{\Ab}{\mathsf{Ab}}
\renewcommand{\mod}{\mathsf{mod}\hspace{.01in}}
\newcommand{\ex}{\mathsf{ex}\hspace{.01in}}
\newcommand{\exn}{\mathsf{ex}_n\hspace{.01in}}
\newcommand{\exnC}{\mathsf{ex}_n\hspace{.01in}\cC}
\newcommand{\exnCop}{\mathsf{ex}_n\hspace{.01in}\cC^{\op}}
\newcommand{\PCn}{\operatorname{Fun}^{\cC}_n}
\newcommand{\PCnop}{\operatorname{Fun}^{\cC^{\op}}_n}
\newcommand{\RCn}{\operatorname{Res}^{\cC}_n}
\newcommand{\OCn}{\operatorname{Op}^{\cC}_n}
\newcommand{\OCnop}{\operatorname{Op}^{{\cC}^{\op}}_n}
\newcommand{\XnC}{\mathbf{C}^{\mathrm{ex}}_n\hspace{.01in}(\cC)}
\newcommand{\XnCop}{\mathbf{C}^{\mathrm{ex}}_n\hspace{.01in}(\cC^{\op})}
\newcommand{\KXnC}{\mathbf{K}^{\mathrm{ex}}_n\hspace{.01in}(\cC)}
\newcommand{\KC}{\mathbf{K}\hspace{.01in}(\cC)}
\newcommand{\K}{\mathbf{K}\hspace{.01in}}
\newcommand{\KXnCop}{\mathbf{K}^{\mathrm{ex}}_n\hspace{.01in}(\cC^{\op})}
\newcommand{\nC}{\mathbf{C}_n\hspace{.01in}(\cC)}
\newcommand{\Mod}{\mathsf{Mod}\hspace{.01in}}
\newcommand{\proj}{\mathsf{proj}\hspace{.01in}}
\newcommand{\cok}{\operatorname{Cok}\nolimits}
\newcommand{\cokm}[1]{p_{\hspace{.001in}#1}}
\newcommand{\Ext}{\operatorname{Ext}\nolimits}
\newcommand{\Hm}{\operatorname{H}\nolimits}
\newcommand{\Z}{\operatorname{Z}\nolimits}
\newcommand{\Hom}{\operatorname{Hom}\nolimits}
\newcommand{\id}{\operatorname{id}\nolimits}
\newcommand{\Id}{\operatorname{Id}\nolimits}
\newcommand{\im}{\operatorname{Im}\nolimits}
\newcommand{\imm}[1]{j_{#1}}
\newcommand{\coimm}[1]{q_{#1}}
\renewcommand{\ker}{\operatorname{Ker}\nolimits}
\newcommand{\kerm}[1]{i_{\hspace{.001in}#1}}
\newcommand{\op}{\operatorname{op}\nolimits}
\newcommand{\pdim}{\operatorname{pdim}\nolimits}
\newcommand{\nker}{left $n$-exact sequence\xspace}
\newcommand{\ncoker}{right $n$-exact sequence\xspace}
\newcommand{\ncokers}{right $n$-exact sequences\xspace}
\newcommand{\aug}{\cokm{\cov{d^X_1}}}
\renewcommand{\subset}{\subseteq}
\def\Biggg#1{{\hbox{$\left#1\vbox to25\p@{}\right.\n@space$}}}
\newcommand{\Tr}{\operatorname{Tr}}
\newcommand{\trnc}{{\Tr_{n+1}^{\cC}}}
\newcommand{\trncop}{{\Tr_{n+1}^{\cC^{\op}}}}
\newcommand{\Pb}{\mathsf{Pb}}
\newcommand{\Po}{\mathsf{Po}}
\newcommand{\sm}[1]{\left[\begin{smallmatrix} #1 \end{smallmatrix}\right]}
\newcommand{\s}{\mbox{}\xspace}
\newcommand{\coloneqq}{:=}
\crefname{figure}{diagram}{diagrams}
\Crefname{figure}{Diagram}{Diagrams}
 \newcommand{\xmapsfrom}[2][]{%
    \ext@arrow3095\leftarrowfill@{#1}{#2}\mapsfromchar
}
\def\Axioms#1{\expandafter\@Axioms\csname c@#1\endcsname}
\def\@Axioms#1{\ifcase#1\or (EA1)\or (EA1${}^{\op}$)\or (EA2)\or (EA2${}^{\op}$)\fi}
\AddEnumerateCounter{\Axioms}{\@Axioms}{First}
\numberwithin{equation}{section}
\newcommand{\covnp}[2][]{{#2}_{\cC}^{#1}}
\newcommand{\connp}[2][]{{#2}_{#1}^{\cC}}
\newcommand{\cov}[2][]{\covnp[{#1}]{[#2]}}
\newcommand{\con}[2][]{\connp[{#1}]{[#2]}}
\newcommand{\covsm}[2][]{\covnp[#1]{\sm{#2}}}
\newcommand{\mylabel}[2]{#2\def\@currentlabel{#2}\label{#1}}
\begin{document}
   \title[The existence and uniqueness of maximal $n$-exact structures]{On $n$-exact categories I: The existence and uniqueness of maximal $n$-exact structures}

   \author{Carlo Klapproth}\address{Institut für Algebra und Zahlentheorie (IAZ), Universität Stuttgart, 70569 Stuttgart, Germany} 
   \email{carlo.klapproth@mathematik.uni-stuttgart.de}
   \keywords{$n$-Exact category, Higher homological algebra} %
   \subjclass{18A25 (Primary) 18E05, 18G99 (Secondary)}
   \maketitle
   
   \begin{abstract}
         This paper is the first part of a series that investigates the existence of $n$-exact structures on idempotent complete additive categories for positive integers $n$.
         It is shown that every idempotent complete additive category has a unique maximal $n$-exact structure.
         We achieve this by constructing a bijection between $n$-exact structures on a category and certain subcategories of its functor category following ideas of Enomoto.
   \end{abstract}

\tableofcontents

   \section{Introduction}
   Jasso introduced $n$-exact categories in \cite{Jas16} to formalise the properties which are inherited by \emph{$n$-cluster-tilting subcategories} of abelian or exact categories.
   Their axiomatic framework is very similar to that of exact categories but $3$-term short exact sequences are replaced by \emph{$n$-exact sequences} which have $(n+2)$-terms, cf.\s\ e.g.\s\ \Cref{def:nker}.
   In particular, the axioms of $n$-exact categories are intrinsic, that is they do not depend on any ambient abelian or exact category.
   Though it is not clear from the onset whether an additive category carries a non-trivial $n$-exact structure and indeed most examples of $n$-exact categories seem to be constructed as $n$-cluster-tilting subcategories of exact categories.
   We follow Enomoto's functorial approach to exact categories \cite{Eno18} to give a more intrinsic approach of classifying all $n$-exact structures on a given idempotent complete category $\cC$.
   This approach is conceptually also similar to Gulisz's recent functorial approach to $n$-abelian categories, cf.\s\ \cite{Gul24}.

   We show that $n$-exact sequences in $\cC$ can be seen as projective resolutions of functors which are contained in a subcategory $\exnC$ of the category of all finitely resolved functors $\mod \cC$, cf.\s\ \Cref{not:functors}.
   Moreover, we develop a modified set of axioms for $n$-exact categories which is equivalent to Jasso's original definition but which is based on a class of conflations which is closed under homotopy equivalence instead of weak isomorphisms, see \Cref{def:nexact} and \Cref{thm:thesame}.
   The above ideas allow us to view $n$-exact structures on $\cC$ as extension closed subcategories of $\exnC$ which are stable under two operations $\Pb^\cC_n$ and $\Po^{\cC}_n$, cf.\s\ \Cref{def:exnc,def:exncop}.
   \begin{introtheorem}[{\Cref{thm:main}}]\label{introthm:main}
      For any idempotent complete additive category $\cC$ there is an inclusion preserving one-to-one correspondence 
      \begin{align*}
         \left\{\parbox{13.2em}{\centering $n$-exact structures $(\cC,\sX)$ on $\cC$}\right\} &\xleftrightarrow{\text{\emph{1:1}}} {\left\{\parbox{18em}{\centering extension closed subcategories $\sF \subset \exn \cC$ such that $\Pb_n^{\cC}(\sF) = \sF$ and $\Po_n^{\cC}(\sF) = \sF$}\right\}}.
      \end{align*}
   \end{introtheorem}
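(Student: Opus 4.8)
The plan is to turn the two sides of the claimed correspondence into explicit assignments and then to match, one axiom at a time, the modified $n$-exact axioms of \Cref{def:nexact} (which by \Cref{thm:thesame} describe the same notion as Jasso's original definition) against the three conditions on the functor side. The starting point is the dictionary developed around \Cref{not:functors}: an $n$-exact sequence $X_\bullet\colon X_0\to\cdots\to X_{n+1}$ in $\cC$ has a \emph{defect} $F_{X_\bullet}\in\exn\cC$, the cokernel of $\Hom(-,X_n)\to\Hom(-,X_{n+1})$, sitting in an exact sequence $0\to\Hom(-,X_0)\to\cdots\to\Hom(-,X_{n+1})\to F_{X_\bullet}\to 0$ in $\mod\cC$; every object of $\exn\cC$ arises as such a defect; and two $n$-exact sequences have isomorphic defects if and only if they are homotopy equivalent, since in $\mod\cC$ projective resolutions are unique up to homotopy and the Yoneda embedding is fully faithful. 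I would then define $\Phi(\cC,\sX)$ to be $\set{F_{X_\bullet}}{X_\bullet\in\sX}$ and $\Psi(\sF)$ to be the class of all $n$-exact sequences $X_\bullet$ with $F_{X_\bullet}\in\sF$. Because $\sX$ contains the split $n$-exact sequences and is closed under isomorphisms and finite direct sums, $\Phi(\cC,\sX)$ is a full subcategory of $\exn\cC$ containing $0$, and both assignments are plainly inclusion preserving; so it remains to prove (i) $\Phi(\cC,\sX)$ is extension closed and stable under $\PbC$ and $\Po_n^{\cC}$, (ii) $\Psi(\sF)$ satisfies the axioms of \Cref{def:nexact}, and (iii) $\Phi$ and $\Psi$ are mutually inverse.

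For (i), stability under $\PbC$ and $\Po_n^{\cC}$ is, given how these operations are defined in \Cref{def:exnc,def:exncop}, essentially a restatement of the pullback and pushout axioms: $\PbC$ and $\Po_n^{\cC}$ compute, on the level of defects, the formation of the $n$-pullback of $X_\bullet$ along a morphism into $X_{n+1}$ and of the $n$-pushout of $X_\bullet$ along a morphism out of $X_0$, and the relevant axioms of \Cref{def:nexact} state that these $n$-exact sequences again belong to $\sX$; since $\sF\subset\PbC(\sF)$ and $\sF\subset\Po_n^{\cC}(\sF)$ hold for formal reasons, the content of $\PbC(\sF)=\sF$ and $\Po_n^{\cC}(\sF)=\sF$ is precisely this closure. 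For extension closedness: given an exact sequence $0\to F_{X_\bullet}\to G\to F_{Y_\bullet}\to 0$ in $\mod\cC$ with $X_\bullet,Y_\bullet\in\sX$, the horseshoe lemma produces a length-$(n+1)$ resolution of $G$ by representables whose left-most map is monic, so $G\in\exn\cC$, together with an $n$-exact sequence $Z_\bullet$ satisfying $F_{Z_\bullet}\cong G$; one then checks that $Z_\bullet$ is obtained from $X_\bullet$ and $Y_\bullet$ by direct sums, $n$-pushouts and $n$-pullbacks, whence $Z_\bullet\in\sX$ and $G\in\Phi(\cC,\sX)$. This last identification is the higher-dimensional analogue of the corresponding step in \cite{Eno18}.

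For (ii), let $\sF\subset\exn\cC$ be extension closed with $\PbC(\sF)=\sF$ and $\Po_n^{\cC}(\sF)=\sF$. That $\Psi(\sF)$ contains the split $n$-exact sequences and is closed under isomorphism, finite direct sums and homotopy equivalence is immediate from the dictionary, since these sequences have defect $0\in\sF$ and homotopy equivalent $n$-exact sequences have isomorphic defects. The pushout axiom holds because, for $X_\bullet\in\Psi(\sF)$ and a morphism out of $X_0$, one constructs the required $n$-pushout $n$-exact sequence by working through the representable resolution of $F_{X_\bullet}$ in $\mod\cC$, and its defect lies in $\Po_n^{\cC}(\sF)=\sF$; the pullback axiom is dual, using $\PbC(\sF)=\sF$. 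The remaining, composition-type axiom I would deduce from extension closedness of $\sF$: the $n$-exact sequence arising from the relevant two-step process has a defect that fits into a short exact sequence in $\mod\cC$ with outer terms in $\sF$, hence lies in $\sF$. Granting (i) and (ii), part (iii) is formal: $\Phi\Psi(\sF)=\sF$ because each object of $\sF$ is the defect of one of its own projective resolutions, and $\Psi\Phi(\cC,\sX)=(\cC,\sX)$ because an $n$-exact sequence whose defect is isomorphic to $F_{X_\bullet}$ for some $X_\bullet\in\sX$ is homotopy equivalent to $X_\bullet$ and therefore already lies in $\sX$.

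I expect the main obstacle to be part (ii), in two respects. First, one must show that the purely functor-categorical hypotheses on $\sF$ are strong enough to force the \emph{existence} of the $n$-pushout and $n$-pullback $n$-exact sequences required by \Cref{def:nexact}, not merely their membership in $\Psi(\sF)$ once built; this is where the passage between an object of $\exn\cC$ and its essentially unique representable resolution does the real work, and where idempotent completeness of $\cC$ is genuinely needed, ensuring that the finitely generated projective functors are precisely the representables, so that resolutions in $\mod\cC$ descend to $(n+2)$-term diagrams in $\cC$ of the expected shape. Second, one has to pin down exactly how $\PbC$ and $\Po_n^{\cC}$ interact with short exact sequences in $\mod\cC$, and how the homotopy relation on $n$-exact sequences matches isomorphism of defects; pushing Enomoto's analysis in \cite{Eno18} through for $(n+2)$-term sequences, where a class of conflations closed under homotopy equivalence genuinely replaces one closed under weak isomorphism, is the technical heart of the argument.
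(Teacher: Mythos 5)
Your overall architecture is the same as the paper's: you use the defect/resolution equivalence ($\PCn$ and $\RCn$) between homotopy classes of $n$-exact sequences and $\exn\cC$, and you translate the axioms of \Cref{def:nexact} one at a time, matching \ref{EA2}/\ref{EA2op} with $\Po^{\cC}_n$- resp.\ $\Pb^{\cC}_n$-stability and the composition axioms with extension closure, exactly as in \Cref{lem:pb,lem:pbcl,lem:exclos,cor:poextcl2}. However, there is a genuine flaw in your step (i). You justify extension closedness of $\Phi(\cC,\sX)$ by saying that the horseshoe complex $Z_\bullet$ "is obtained from $X_\bullet$ and $Y_\bullet$ by direct sums, $n$-pushouts and $n$-pullbacks, whence $Z_\bullet\in\sX$". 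This cannot work as stated: by \Cref{lem:exclos}, for an isomorphism closed class satisfying \ref{EA2op}, extension closure of $\PCn(\sX)$ is \emph{equivalent} to the composition axiom \ref{EA1op}, so it cannot be a consequence of closure under direct sums, weak pullbacks and weak pushouts alone — otherwise the composition axioms would be redundant, which they are not (this independence is precisely why maximal structures are a nontrivial issue). The mechanism that actually proves this step is the composition axiom: the paper realizes the middle term $G$ of $0\to F\to G\to H\to 0$ as $\cok\cov{f_0 d^Y_1}$, where $d^Y_1$ and $f_0$ are $\sX$-deflations with $\cok\cov{d^Y_1}\cong F$ and $\cok\cov{f_0}\cong H$, and then invokes \ref{EA1op}. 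Since in direction (i) you do have all the axioms of an $n$-exact structure at your disposal, the gap is repairable, but your sketch assigns the work to the wrong axioms and the "one then checks" step would fail.

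Two smaller points. First, the trivial inclusion goes the other way: by \Cref{def:exnc}, $\Pb^{\cC}_n(\sF)$ consists of certain objects of $\sF$, so $\Pb^{\cC}_n(\sF)\subseteq\sF$ is formal and the content of $\Pb^{\cC}_n(\sF)=\sF$ is the inclusion $\sF\subseteq\Pb^{\cC}_n(\sF)$; your claim that $\sF\subseteq\Pb^{\cC}_n(\sF)$ holds "for formal reasons" is backwards (the intended meaning survives, but it suggests a misreading of the operator). Second, in your step (ii) the deduction of the composition axiom from extension closedness alone is not quite enough: in the short exact sequence controlling the composite deflation (the analogue of \eqref{eq:sesextseq}) the left-hand term is a quotient of the defect of the first deflation, not that defect itself, and one needs $\Pb^{\cC}_n$-stability (via \Cref{lem:pbcl}) to see that it lies in $\sF$ before extension closure can be applied.
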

   Stability under $\Pb^\cC_n$ resp.\s\ $\Po^\cC_n$ corresponds to deflations resp.\s\ inflations being stable under weak pullbacks resp.\s\ weak pushouts.
   Under these stability assumptions, being extension closed corresponds to inflations and deflations being closed under composition.

   We show, as a corollary of Enomoto's approach, that every idempotent complete additive category carries a unique maximal $n$-exact structure.
   Similar results in the case $n=1$ have been achieved by Sieg \cite{Sie11} for pre-abelian categories and more generally by Crivei \cite{Cri12} for weakly idempotent complete additive categories and by Rump \cite{Rum11} for arbitrary additive categories.
   \begin{introtheorem}[\Cref{thm:exmax}]\label{introthm:main2}
      Every idempotent complete additive category $\cC$ admits a unique maximal $n$-exact structure. 
   \end{introtheorem}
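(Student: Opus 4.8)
The plan is to read this off the correspondence of \Cref{introthm:main}. Because that correspondence preserves inclusions in both directions, a \emph{largest} $n$-exact structure on $\cC$ is precisely the same datum as a largest element of the poset $\mathcal{P}$ of extension closed subcategories $\sF \subset \exn\cC$ satisfying $\Pb_n^\cC(\sF) = \sF$ and $\Po_n^\cC(\sF) = \sF$, ordered by inclusion; and a largest element of a poset, when it exists, is automatically unique. So the theorem reduces to showing that $\mathcal{P}$ has a largest element, and the natural candidate is $\exn\cC$ itself: since every $\sF \in \mathcal{P}$ is by definition contained in $\exn\cC$, it suffices to prove the single claim that $\exn\cC$ belongs to $\mathcal{P}$, i.e.\ that $\exn\cC$ is extension closed in $\mod\cC$ and is fixed by each of $\Pb_n^\cC$ and $\Po_n^\cC$.

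For the two stability conditions I would work with the description of $\exn\cC$ as (roughly) those functors whose projective resolution by representables comes from a complex that is an $n$-exact sequence in the sense of \Cref{def:nker}, i.e.\ one satisfying both the ``$n$-kernel'' and the ``$n$-cokernel'' exactness condition. The operation $\Pb_n^\cC$ is the functorial counterpart of forming weak pullbacks along the deflation of such a complex; it plainly preserves the ``$n$-cokernel'' condition, so the content is that weak pullbacks preserve the ``$n$-kernel'' condition as well, and dually for $\Po_n^\cC$ and weak pushouts. This is exactly the step where idempotent completeness of $\cC$ enters, and where, in the case $n = 1$, the arguments of Rump, Sieg and Crivei are located. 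Together with the essentially formal inclusions $\sF \subseteq \Pb_n^\cC(\sF)$ and $\sF \subseteq \Po_n^\cC(\sF)$, applied to the top subcategory $\sF = \exn\cC$, this gives $\Pb_n^\cC(\exn\cC) = \exn\cC = \Po_n^\cC(\exn\cC)$.

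For extension closedness I would argue by a horseshoe-type construction: given a short exact sequence $0 \to F' \to F \to F'' \to 0$ in $\mod\cC$ with $F', F'' \in \exn\cC$, splice their chosen length-$(n+1)$ representable resolutions to obtain such a resolution of $F$, and then check that the resulting $(n+2)$-term complex in $\cC$ again satisfies both the $n$-kernel and the $n$-cokernel condition; this is a diagram chase with the long exact homology sequence attached to $0 \to F' \to F \to F'' \to 0$ and the corresponding properties of $F'$ and $F''$. I expect this joint verification of the three closure conditions for $\exn\cC$ to be the main obstacle --- it is the one place where the self-dual nature of $n$-exact sequences and the use of idempotent completeness have to be confronted directly rather than transported through \Cref{introthm:main}. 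Granting it, $\exn\cC$ is the largest member of $\mathcal{P}$, and \Cref{introthm:main} then supplies a unique maximal $n$-exact structure, namely the one corresponding to $\sF = \exn\cC$; for $n = 1$ this recovers the maximal exact structure of Rump and Crivei.
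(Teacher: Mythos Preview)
Your plan has a genuine gap: the direction of the ``essentially formal'' inclusion is wrong. By \Cref{def:exnc}, $\Pb_n^\cC(\sF)$ is the full subcategory of those $F \in \sF$ satisfying an \emph{additional} condition (namely that $\im\alpha$ and $F/\im\alpha$ lie in $\sF$ for every $\alpha$ into $F$), so one always has $\Pb_n^\cC(\sF) \subseteq \sF$, not $\sF \subseteq \Pb_n^\cC(\sF)$; see also \Cref{lem:pbextclosed}\ref{item:incl}. Consequently the non-trivial content of $\Pb_n^\cC(\exn\cC) = \exn\cC$ is not what you describe: it is the assertion that for every $F \in \exn\cC$ and every $\alpha \colon G \to F$, both $\im\alpha$ and $F/\im\alpha$ again lie in $\exn\cC$. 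The paper observes (in the remark following \Cref{thm:main}) that this, together with its dual, is \emph{equivalent} to $\cC$ being quasi $n$-abelian. So your candidate $\exn\cC$ is the maximal element of $\mathcal{P}$ precisely in the quasi $n$-abelian case, and your argument cannot go through in general. Even for $n=1$ this fails: not every kernel--cokernel pair in an idempotent complete additive category is a conflation for the maximal exact structure.

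The paper's actual route is to accept that $\exn\cC$ need not be fixed and instead \emph{iterate}: one sets $\mathsf{max}_n\cC \coloneqq (\Po_n^\cC)^\infty\bigl((\Pb_n^\cC)^\infty(\exn\cC)\bigr)$, the intersection over all finite iterates. The substantive work (\Cref{lem:pbextclosed}) is to show that $\Pb_n^\cC$ preserves extension closedness, is monotone, and --- crucially --- that once a subcategory is stabilised under $\Pb_n^\cC$, applying $\Po_n^\cC$ does not destroy this (part \ref{item:pbab}). From this one deduces in \Cref{prop:actualmain} that $\mathsf{max}_n\cC$ is fixed by both operators, extension closed, and contains every $\sF$ with $\Pb_n^\cC(\sF)=\sF=\Po_n^\cC(\sF)$. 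Your proof of extension closedness of $\exn\cC$ via the horseshoe lemma is fine (the paper records this just after \Cref{not:functors}), but it is only the starting point of the iteration, not the endpoint.
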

   In our language it turns out that the operations $\Pb^\cC_n$ and $\Po^\cC_n$ turn extension closed subcategories of $\exnC$ into such.
   As a consequence of this and \Cref{introthm:main} we achieve \Cref{introthm:main2} by repeatedly applying the two operators to $\exnC$ and then intersecting. 
    
   The unique maximal $n$-exact structure on $\cC$ can be trivial, that is it may consist only of the split $n$-exact sequences.
   In the second part to this paper we will use \Cref{introthm:main,introthm:main2} to construct a series of explicit non-degenerate examples arising from $(n+1)$-dimensional objects of geometric origin.

\section{Conventions and notation}
    Throughout this paper we assume the following global setup unless stated otherwise. 
\begin{setup}\label{stp:global}
   Let $n$ be a positive integer and $\cC$ be an idempotent complete additive category.
\end{setup}
 
    Given two morphisms $X \xrightarrow{f} Y \xrightarrow{g} Z$ in a category we denote their composite by $gf$.
    If $F$ is an additive functor between additive categories and $\sX$ is contained in the domain of $F$ then we denote by $F(\sX)$ the essential image of $\sX$ under $F$.
    If $X_\bullet$ is a complex in the domain of $F$ we denote by $F(X_\bullet)$ the complex obtained from applying $F$ termwise to $X_\bullet$.

\subsection{\texorpdfstring{$\cC$}{C}-Modules and \texorpdfstring{$\cC$}{C}-morphisms}
In this section we recall well-known facts about functor categories.
We want to mitigate the set-theoretic issues associated with the category $\Mod \cC$ of all additive functors $\cC^{\op} \to \Ab$ for a potentially non-small category $\cC$.
However, if $\Mod \cC$ is a well-defined category, for example if $\cC$ is small, then $\Mod \cC$ is an abelian category and most of the statements below are direct consequences of this.

We say a \emph{$\cC$-module} is an additive functor $F \colon \cC^{\op} \to \Ab$ and a \emph{$\cC$-morphism} is a natural transformation $\alpha \colon F \to G$ of $\cC$-modules.
We write $\cov{=} \coloneqq \cC(-,=)$ and $\con{-} \coloneqq \cC(-,=)$ for the covariant resp.\s\ contravariant Yoneda embedding.
Moreover, for $X \in \cC$ we write $\cov[X]{=} \coloneqq \cC(X,=)$ and $\con[X]{-} \coloneqq \cC(-,X)$. 

We call a sequence $F \to G \to H$ of $\cC$-modules \emph{pointwise exact at $G$} if the induced sequence $F(X) \to G(X) \to H(X)$ is exact at $G(X)$ in $\Ab$ for all $X \in \cC$.
A $\cC$-morphism $\alpha \colon F \to G$ is called \emph{pointwise monomorphism} (or \emph{pointwise epimorphism}) if $\alpha_X$ is a monomorphism (or epimorphism) in $\Ab$ for all $X \in \cC$.

Through a calculation one can show that the following definition is correct.
\begin{definition}
    For any $\cC$-morphism $\alpha \colon F \to G$ we define a $\cC$-module $\cok \alpha$, called the \emph{cokernel of $\alpha$}, and a $\cC$-morphism $\cokm{\alpha} \colon G \to \cok \alpha$ as follows:
\begin{enumerate}[label={{(\alph*)}}]
    \item For $X \in \cC$ the abelian group $(\cok \alpha)(X)$ is the quotient $G(X)/\im(\alpha_X)$.
    \item For $X \in \cC$ the morphism $(\cokm{\alpha})_X \colon G(X) \to (\cok \alpha)(X)$ is the canonical map.
    \item For $f \in \cC(X,Y)$ the map $(\cok \alpha)(f) \colon (\cok \alpha)(Y) \to (\cok \alpha)(X)$ is the unique morphism in $\Ab$ which satisfies $(\cok \alpha)(f) (\cokm{\alpha})_Y = (\cokm{\alpha})_X G(f)$.
\end{enumerate} 
Similarly, we define $\cC$-modules $\ker \alpha$ and $\im \alpha$, called the \emph{kernel of $\alpha$} resp.\s\ \emph{image of $\alpha$}, and $\cC$-morphisms $\kerm{\alpha} \colon \ker \alpha \to F$ as well as $\coimm{\alpha} \colon F \to \im \alpha$ and $\imm{\alpha} \colon \im \alpha \to G$ with $\alpha = \imm{\alpha}\coimm{\alpha}$, by using the canonical kernels resp.\s\ images in $\Ab$.
We also construct direct sums, pushouts and pullbacks of $\cC$-modules as we do in $\Ab$.
We sometimes write $G/\im \alpha \coloneqq \cok \alpha$.%
\end{definition}

The above $\cC$-modules and $\cC$-morphisms are illustrated in the commutative \Cref{fig:imagefact}
\begin{figure}[H]
\begin{tikzcd}[ampersand replacement=\&,cramped]
	\& F \&\& G \\
	{\ker \alpha} \&\& {\im \alpha} \&\& {\cok \alpha}
	\arrow["\alpha", from=1-2, to=1-4]
	\arrow["{q_{\alpha}}", from=1-2, to=2-3]
	\arrow["{p_{\alpha}}", from=1-4, to=2-5]
	\arrow["{i_{\alpha}}", from=2-1, to=1-2]
	\arrow["0", from=2-1, to=2-3]
	\arrow["{j_{\alpha}}", from=2-3, to=1-4]
	\arrow["0", from=2-3, to=2-5]
\end{tikzcd}
\caption{Canonical $\cC$-modules and $\cC$-morphisms associated to $\alpha$.}
\label{fig:imagefact}
\end{figure}\noindent
where $0 \to \ker \alpha \to F \to \im \alpha \to 0$ and $0 \to \im \alpha \to G \to \cok \alpha \to 0$ form pointwise exact sequences.
We say that $\imm\alpha\coimm \alpha$ is an \emph{image factorisation of $\alpha$}.
The following useful lemma is referred to as the \emph{universal property of the kernel}.

\begin{lemma}\label{lem:kern}
    Let $0 \to F \xrightarrow{\alpha} G \xrightarrow{\beta} H$ be a pointwise exact sequence of $\cC$-modules and $\gamma$ be a $\cC$-morphism ending at $G$. 
    If $\beta \gamma = 0$ then $\gamma$ factors uniquely through $\alpha$.
\end{lemma}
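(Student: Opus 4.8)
The plan is to reduce everything to the corresponding statement in $\Ab$ and then to upgrade the resulting pointwise factorisation to a genuine $\cC$-morphism by a naturality check; this is just the usual universal property of kernels, carried out pointwise. Write $\gamma \colon E \to G$ for the given $\cC$-morphism.

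First I would fix $X \in \cC$ and exploit the hypothesis pointwise. The sequence $0 \to F(X) \xrightarrow{\alpha_X} G(X) \xrightarrow{\beta_X} H(X)$ is exact in $\Ab$, so $\alpha_X$ is a monomorphism and $\im \alpha_X = \ker \beta_X$. Since $\beta\gamma = 0$ we have $\beta_X \gamma_X = 0$, hence $\im \gamma_X \subseteq \ker \beta_X = \im \alpha_X$. As $\alpha_X$ is injective, there is a unique group homomorphism $\delta_X \colon E(X) \to F(X)$ with $\alpha_X \delta_X = \gamma_X$.

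Next I would check that $\delta \coloneqq (\delta_X)_{X \in \cC}$ is a $\cC$-morphism, i.e.\ natural. Given $f \in \cC(X,Y)$, the two parallel maps $F(f)\delta_Y$ and $\delta_X E(f)$ from $E(Y)$ to $F(X)$ become equal after postcomposition with the monomorphism $\alpha_X$: using naturality of $\alpha$ and of $\gamma$ one computes $\alpha_X F(f) \delta_Y = G(f) \alpha_Y \delta_Y = G(f)\gamma_Y = \gamma_X E(f) = \alpha_X \delta_X E(f)$, so $F(f)\delta_Y = \delta_X E(f)$. Thus $\delta \colon E \to F$ is a $\cC$-morphism, and $\alpha\delta = \gamma$ by construction. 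For uniqueness, if $\alpha\delta = \alpha\delta' = \gamma$ then $\alpha_X\delta_X = \alpha_X\delta'_X$ for every $X$, and since $\alpha_X$ is a monomorphism in $\Ab$ we get $\delta_X = \delta'_X$, whence $\delta = \delta'$.

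The only point requiring a little care — the nearest thing to an obstacle — is the naturality verification, which is the diagram chase above and hinges on $\alpha$ being a pointwise monomorphism; everything else is the ordinary universal property of kernels transported pointwise from $\Ab$. (If $\Mod \cC$ happens to be a genuine abelian category the lemma is immediate, as then $\alpha = \kerm{\beta}$ up to the canonical isomorphism.)
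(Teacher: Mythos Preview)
Your proof is correct and follows essentially the same approach as the paper: the paper's proof is a one-sentence sketch indicating that one should use the universal property of the kernel in $\Ab$ pointwise and then use that each $\alpha_X$ is a monomorphism to verify naturality of the resulting family, and your argument simply writes out these details explicitly.
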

\begin{proof}
    This follows from the universal property of the kernel in $\Ab$ using that $\alpha_X$ is a monomorphism for all $X \in \cC$ to show that the obtained family of factorisations is natural.
\end{proof}
The dual property to \Cref{lem:kern} is called the \emph{universal property of the cokernel} and it also holds.
The following direct consequence of the Yoneda lemma implies that $\cov{X}$ is an projective object in $\Mod \cC$ if the latter is a well-defined category.
\begin{lemma}\label{lem:projlift}
    Let $F \xrightarrow{\alpha} G \xrightarrow{\beta} H$ be a pointwise exact sequence of $\cC$-modules, $X$ an object in $\cC$ and $\gamma \colon \cov{X} \to G$ be a $\cC$-morphism with $\beta \gamma = 0$.
    Then $\gamma$ factors through $\alpha$.
\end{lemma}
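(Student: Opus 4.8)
The statement is a direct application of the Yoneda lemma, which I would invoke twice. Recall that for any $\cC$-module $M$ and any $X \in \cC$ the Yoneda lemma supplies a bijection between the $\cC$-morphisms $\cov{X} \to M$ and the elements of the abelian group $M(X)$, sending a $\cC$-morphism $\varphi$ to $\varphi_X(\id_X) \in M(X)$. This correspondence is available even when $\Mod \cC$ fails to be a legitimate category, since a $\cC$-morphism out of the representable functor $\cov{X}$ is both determined by, and can be prescribed by, a single element of $M(X)$; this is the only point in the argument that needs a moment's care in the present generality.

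First I would set $g \coloneqq \gamma_X(\id_X) \in G(X)$, using that $\id_X \in \cC(X,X) = \cov{X}(X)$. Evaluating the identity $\beta\gamma = 0$ at the component $X$ and applying it to $\id_X$ gives $\beta_X(g) = (\beta\gamma)_X(\id_X) = 0$, so $g \in \ker(\beta_X)$. By the assumed pointwise exactness of $F \xrightarrow{\alpha} G \xrightarrow{\beta} H$ at $G$ we have $\ker(\beta_X) = \im(\alpha_X)$ inside $G(X)$, hence there is some $f \in F(X)$ with $\alpha_X(f) = g$.

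Next I would apply the Yoneda lemma to the $\cC$-module $F$ to obtain the $\cC$-morphism $\delta \colon \cov{X} \to F$ determined by $\delta_X(\id_X) = f$. It remains to check $\alpha\delta = \gamma$. Both $\alpha\delta$ and $\gamma$ are $\cC$-morphisms $\cov{X} \to G$, so by the injectivity half of the Yoneda bijection for $G$ it suffices to see that they are sent to the same element of $G(X)$; and indeed $(\alpha\delta)_X(\id_X) = \alpha_X(\delta_X(\id_X)) = \alpha_X(f) = g = \gamma_X(\id_X)$. Therefore $\gamma = \alpha\delta$ factors through $\alpha$, as claimed.

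There is no genuine obstacle here: the proof is entirely routine once the Yoneda correspondence is in hand, and (as noted above) the sole subtlety is that this correspondence for morphisms out of representable functors does not require $\Mod \cC$ to exist as a category. I would expect the author's proof to be a one- or two-line version of exactly this computation.
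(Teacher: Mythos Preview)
Your proof is correct and matches the paper's approach: the paper does not spell out a proof but simply states that the lemma is a ``direct consequence of the Yoneda lemma,'' which is exactly what you have carefully unwound. Your additional remark that the Yoneda bijection for morphisms out of a representable functor does not require $\Mod\cC$ to be a well-defined category is also in the spirit of the paper's surrounding discussion.
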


The following can be seen as an extension of the third isomorphism theorem.

\begin{lemma}\label{lem:rsnake}
For any pair of $\cC$-morphisms $\alpha \colon F \to G$ and $\beta \colon G \to H$ there exists an induced pointwise exact sequence
$0 \to {\im \cokm{\beta\alpha} \beta}  \to {\cok \beta \alpha} \to {\cok \beta} \to 0$
of $\cC$-modules. Moreover, $\cok \alpha/\im \cokm\alpha\kerm\beta \cong \im \cokm{\beta\alpha} \beta$ naturally.
\end{lemma}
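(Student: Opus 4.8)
The plan is to reduce both assertions to elementary statements about quotients of abelian groups, verified pointwise, and then to upgrade the resulting families of homomorphisms to $\cC$-morphisms using the universal properties of the (co)kernel from \Cref{lem:kern} and its dual. Indeed, at each $X\in\cC$, writing $A=F(X)$, $B=G(X)$, $C=H(X)$, one has $\im((\beta\alpha)_X)=\beta_X\alpha_X(A)\subset\beta_X(B)=\im(\beta_X)\subset C$, and the first claimed sequence is exactly the third isomorphism theorem for these three subgroups, while the "moreover" part is the standard description of $\beta_X(B)/\beta_X\alpha_X(A)$ as a quotient of $B/\alpha_X(A)$.

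First I would produce the right-hand map. Since $\cokm{\beta}\beta=0$ we have $\cokm{\beta}(\beta\alpha)=(\cokm{\beta}\beta)\alpha=0$, so the universal property of the cokernel applied to $\cokm{\beta\alpha}\colon H\to\cok\beta\alpha$ yields a unique $\cC$-morphism $\pi\colon\cok\beta\alpha\to\cok\beta$ with $\pi\,\cokm{\beta\alpha}=\cokm{\beta}$; as $\cokm{\beta}$ is a pointwise epimorphism, so is $\pi$. Moreover $\pi(\cokm{\beta\alpha}\beta)=\cokm{\beta}\beta=0$, hence $\imm{\cokm{\beta\alpha}\beta}$ followed by $\pi$ is zero and it remains to check that $0\to\im\cokm{\beta\alpha}\beta\hookrightarrow\cok\beta\alpha\xrightarrow{\pi}\cok\beta\to0$ is pointwise exact. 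The inclusion $\imm{\cokm{\beta\alpha}\beta}$ is automatically a pointwise monomorphism and $\pi$ is a pointwise epimorphism, so one only has to see $\im\cokm{\beta\alpha}\beta=\ker\pi$ pointwise; at $X$ the three modules are $\beta_X(B)/\beta_X\alpha_X(A)$, $C/\beta_X\alpha_X(A)$ and $C/\beta_X(B)$, the map $\pi_X$ is the canonical projection with kernel $\beta_X(B)/\beta_X\alpha_X(A)$, and this is precisely the image of $(\cokm{\beta\alpha}\beta)_X$.

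For the second assertion, from $\cokm{\beta\alpha}(\beta\alpha)=0$ the $\cC$-morphism $\cokm{\beta\alpha}\beta\colon G\to\cok\beta\alpha$ factors uniquely as $\overline\beta\,\cokm{\alpha}$ for a $\cC$-morphism $\overline\beta\colon\cok\alpha\to\cok\beta\alpha$; using $\beta\kerm{\beta}=0$ one gets $\overline\beta\,\cokm{\alpha}\kerm{\beta}=\cokm{\beta\alpha}\beta\kerm{\beta}=0$, so $\overline\beta$ vanishes on $\im\cokm{\alpha}\kerm{\beta}$ and factors through the canonical projection $\cok\alpha\twoheadrightarrow\cok\alpha/\im\cokm{\alpha}\kerm{\beta}$. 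Since both $\cokm{\alpha}$ and this projection are pointwise epimorphisms, the image of $\overline\beta$ equals $\im\cokm{\beta\alpha}\beta$, and forming its image factorisation yields a natural $\cC$-morphism $\theta\colon\cok\alpha/\im\cokm{\alpha}\kerm{\beta}\to\im\cokm{\beta\alpha}\beta$, which I claim is a pointwise isomorphism. At $X$ the morphism $\theta$ is the map $B/(\alpha_X(A)+\ker\beta_X)\to\beta_X(B)/\beta_X\alpha_X(A)$ induced by $\beta_X$; it is surjective by construction, and injective because the pointwise kernel of $(\cokm{\beta\alpha}\beta)_X$ is exactly $\alpha_X(A)+\ker\beta_X$, so $\theta$ is bijective on each $X$ and hence an isomorphism of $\cC$-modules.

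The only real friction I anticipate is notational bookkeeping: one must keep track of which module is a submodule or a quotient of which, and check at each step that the pointwise family of abelian-group homomorphisms is natural in $X$. This, however, is automatic: every morphism used in the argument is either a cokernel projection, an image inclusion, or obtained from a universal property, and \Cref{lem:kern} together with its dual guarantee that such factorisations are unique and therefore natural. No step is genuinely hard; the substance of the lemma is precisely the packaging of the two classical isomorphism theorems into the category of $\cC$-modules.
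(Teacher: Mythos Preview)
Your proof is correct and follows essentially the same approach as the paper: construct the relevant maps via the universal properties of the cokernel, then verify exactness and the claimed isomorphism pointwise in $\Ab$. The paper organises the argument by drawing a single snake-type diagram and reading off the six-term exact sequence, whereas you construct $\pi$ and $\overline\beta$ directly and check the third-isomorphism-theorem identities by hand, but the substance is the same.
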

\begin{proof}
    We are given the solid morphisms of a commutative diagram
\[\begin{tikzcd}[column sep = {5em,between origins}, row sep = {3.5em,between origins}]
	{\ker\beta\alpha} && {\ker\beta} & \\
	{\ker\alpha} & F & G & {\cok \alpha} \\
	0 & & H & \\
	& 0 & {\cok \beta} & {\cok \beta \alpha}.
	\arrow[dotted, from=1-1, to=1-3]
	\arrow["{\kerm{\beta\alpha}}", from=1-1, to=2-2]
	\arrow["{\kerm \beta}"', from=1-3, to=2-3]
	\arrow["{\cokm \alpha \kerm \beta}", dotted, from=1-3, to=2-4]
	\arrow[dotted, from=2-1, to=1-1]
	\arrow["{\kerm{\alpha}}"', from=2-1, to=2-2]
	\arrow["\alpha", from=2-2, to=2-3]
	\arrow["{\beta\alpha}"', from=2-2, to=3-3]
	\arrow["{\cokm \alpha}"', from=2-3, to=2-4]
	\arrow["\beta", from=2-3, to=3-3]
	\arrow[dotted, from=2-4, to=4-4]
	\arrow[dotted, from=3-1, to=2-1]
	\arrow["{\cokm \beta}"', from=3-3, to=4-3]
	\arrow["{\cokm{\beta \alpha}}", from=3-3, to=4-4]
	\arrow[dotted, from=4-3, to=4-2]
	\arrow[dotted, from=4-4, to=4-3]
\end{tikzcd}\]
    Using composition and the universal properties of the kernel and the cokernel we can construct the dotted arrows such that the diagram is commutative. 
    A calclulation in $\Ab$ shows that the dotted sequence is pointwise exact.
    Thus the kernel of the arrow $\cok \beta\alpha \to \cok \beta$ is equal to the image of the arrow $\cok \alpha \to \cok \beta \alpha$ which agrees with $\im \cokm{\alpha\beta}\beta $ as $\cokm{\alpha}$ is a pointwise epimorphism.
    Pointwise exactness at $\cok \alpha$ shows that the kernel of $\cok \alpha \to \cok \beta \alpha$ is equal to $\im \cokm \alpha \kerm \beta$.
    The universal property of the cokernel thus shows that the image of the arrow $\cok \alpha \to \cok \beta \alpha$ is isomorphic to $\cok \alpha/\im {\cokm{\alpha}\kerm{\beta}}$.
\end{proof}

    The following lemma is useful when dealing with weak pullbacks in $\cC$.

\begin{lemma}\label{lem:dia}
    Given the solid $\cC$-morphisms in a diagram
    \[\begin{tikzcd}
	F & G \\
	{G'} & H \\
    \im \cokm{\beta}\beta' & \cok \beta
	\arrow["\alpha", dotted, from=1-1, to=1-2]
	\arrow["{\alpha'}"', dotted, from=1-1, to=2-1]
	\arrow["\beta", from=1-2, to=2-2]
	\arrow["{\beta'}", from=2-1, to=2-2]
    \arrow["\coimm{\cokm\beta\beta'}"', from=2-1, to=3-1, dotted]
    \arrow["\cokm{\beta}", from=2-2, to=3-2]
    \arrow["\imm{\cokm\beta\beta'}", from=3-1, to=3-2, dotted]
\end{tikzcd}\]
   there exists a natural isomorphism $\cok \sm{-\beta & \beta'} \cong \cok \beta/\im (\cokm\beta\beta')$. Additionally, if the dotted $\cC$-morphisms are given such that 
   \[\begin{tikzcd}[ampersand replacement=\&, column sep = 4em]
	F \& {G\oplus G'} \& H
	\arrow["\begin{array}{c} \sm{\alpha \\ \alpha'} \end{array}", from=1-1, to=1-2]
	\arrow["{\sm{-\beta & \beta'}}", from=1-2, to=1-3]
\end{tikzcd}\] 
    is pointwise exact at $G \oplus G'$, then there is a natural isomorphism $\im(\cokm\beta \beta') \cong \cok \alpha'$.
\end{lemma}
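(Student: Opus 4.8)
The plan is to extract both natural isomorphisms from a single application of \Cref{lem:rsnake} to the composable pair $\iota \colon G \to G \oplus G'$ and $\gamma \colon G \oplus G' \to H$, where $\iota = \sm{1\\0}$ is the split inclusion and $\gamma = \sm{-\beta & \beta'}$. Everything rests on two elementary observations: first, $\gamma\iota = -\beta$, so $\cok\gamma\iota = \cok\beta$ and $\cokm{\gamma\iota} = \cokm\beta$ (cokernels are formed pointwise and $-\beta$ has the same image as $\beta$); second, $\cokm\beta\gamma = \cokm\beta\sm{-\beta & \beta'} = \sm{0 & \cokm\beta\beta'}$ since $\cokm\beta\beta = 0$, whence $\im\cokm{\gamma\iota}\gamma = \im(\cokm\beta\beta')$.

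For the first isomorphism I would substitute these identities into the pointwise exact sequence $0 \to \im\cokm{\gamma\iota}\gamma \to \cok\gamma\iota \to \cok\gamma \to 0$ furnished by \Cref{lem:rsnake}; it reads $0 \to \im(\cokm\beta\beta') \to \cok\beta \to \cok\sm{-\beta & \beta'} \to 0$, and the universal property of the cokernel (precisely as used inside the proof of \Cref{lem:rsnake}) then identifies $\cok\sm{-\beta & \beta'}$ with $\cok\beta/\im(\cokm\beta\beta')$ naturally. Equivalently, this is the third isomorphism theorem applied pointwise, since $\cok\sm{-\beta & \beta'} = H/(\im\beta + \im\beta')$ while $\im(\cokm\beta\beta') = (\im\beta + \im\beta')/\im\beta$; routing through \Cref{lem:rsnake} merely supplies the naturality for free.

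For the second isomorphism I would use the remaining conclusion of \Cref{lem:rsnake}, namely the natural isomorphism $\cok\iota/\im\cokm\iota\kerm\gamma \cong \im\cokm{\gamma\iota}\gamma = \im(\cokm\beta\beta')$. Here $\cok\iota$ is identified canonically, hence naturally, with $G'$ via the projection $\sm{0 & 1}$ (using that cokernels commute with finite direct sums), under which $\cokm\iota$ corresponds to $\sm{0 & 1} \colon G \oplus G' \to G'$. The hypothesis that the displayed sequence $F \to G \oplus G' \to H$, with maps $\sm{\alpha \\ \alpha'}$ and $\gamma$, is pointwise exact at $G \oplus G'$ says exactly that the subfunctors $\im\sm{\alpha \\ \alpha'}$ and $\ker\gamma$ of $G \oplus G'$ coincide, both being formed pointwise; composing the inclusion $\kerm\gamma$ with the projection $\sm{0 & 1}$ then shows that $\im\cokm\iota\kerm\gamma$ corresponds to $\im\alpha' \subset G'$. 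Hence $\im(\cokm\beta\beta') \cong \cok\iota/\im\cokm\iota\kerm\gamma \cong G'/\im\alpha' = \cok\alpha'$, naturally.

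I do not expect a genuine obstacle: the whole argument is bookkeeping layered on top of \Cref{lem:rsnake}. The only points requiring a little care are the canonical identification $\cok\iota \cong G'$ together with the transport of $\cokm\iota\kerm\gamma$ along it, and the translation of pointwise exactness at $G \oplus G'$ into the equality of the pointwise subfunctors $\im\sm{\alpha \\ \alpha'}$ and $\ker\sm{-\beta & \beta'}$. Naturality of every isomorphism in sight is inherited from that of the isomorphisms in \Cref{lem:rsnake} and of the splitting isomorphism $\cok\iota \cong G'$, so it propagates through the composite automatically.
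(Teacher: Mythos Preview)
Your proposal is correct and follows essentially the same route as the paper: for the first isomorphism the paper also applies \Cref{lem:rsnake} to the split inclusion $G \to G\oplus G'$ (they use $\sm{-\id_G & 0}^\top$ rather than your $\sm{1\\0}$, an immaterial sign difference) composed with $\sm{-\beta & \beta'}$, invoking the same observation $\im\cokm\beta\sm{-\beta & \beta'}=\im\cokm\beta\beta'$. For the second isomorphism the paper opts for a direct pointwise diagram chase establishing $\ker\coimm{\cokm\beta\beta'}=\im\alpha'$, whereas you route the same computation through the second conclusion of \Cref{lem:rsnake}; the content is identical.
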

\begin{proof}
    The first part follows from \Cref{lem:rsnake} applied to $\sm{-\id_G & 0}^\top$ and $\sm{-\beta & \beta'}$ noticing that $\im \cokm{\beta} \sm{-\beta & \beta'} = \im \cokm{\beta} \beta'$ as $\cokm{\beta}\beta = 0$.
    The second part follows from showing that $\ker \coimm{\cokm\beta\beta'} = \im \alpha'$ for example through a pointwise diagram chase in $\Ab$.
\end{proof}

Similar to \cite[Defininition 2.2]{Eno17}, we introduce the following category which consists of all \emph{finitely resolved $\cC$-modules}.
Notice, all set-theoretic issues can be eliminated because $\mod \cC$ can be identified with a subcategory of the homotopy category $\KC$ of $\cC$.

\begin{definition}\label{lem:fresfunctors}
We denote by $\mod \cC$ the category of all $\cC$-modules $F$ for which there exists an (infinite) pointwise exact sequence of the form $\cdots \to \cov{X_1} \to \cov{X_0} \to F \to 0$ where $X_i \in \cC$ for $i \in \NN$ equipped with $\cC$-morphisms as morphisms and the usual composition.
\end{definition}

The category $\mod \cC$ satisfies the following \Cref{lem:syzygy,cor:exactcat}. 
In the case where $\cC$ is essentially small this is a direct consequence of \cite[Proposition 2.6]{Eno17} and the proof given in loc.\s\ cit.\s\ can be adopted to the case where $\cC$ is potentially not essentially small.

\begin{lemma}\label{lem:syzygy}
   Suppose $0 \to F \to G \to H \to 0$ is a pointwise exact sequence of $\cC$-modules. 
   If two of the three $\cC$-modules $F$, $G$ and $H$ are in $\mod \cC$ then so is the third.
\end{lemma}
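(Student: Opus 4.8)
The plan is to transport the classical horseshoe- and snake-lemma arguments for projective resolutions into $\mod\cC$, exploiting that the representables $\cov{X}$ behave like projective objects. Three facts will be used repeatedly: since $\cC$ is additive, a finite direct sum of representables is again representable; $\mod\cC$ is closed under finite direct sums (take termwise direct sums of resolutions); and by \Cref{lem:projlift} every $\cC$-morphism $\cov{X}\to B$ lifts along any pointwise epimorphism $A\twoheadrightarrow B$. The only syzygy fact needed is this: if $M\in\mod\cC$ and $\cdots\to\cov{X_1}\xrightarrow{d_1}\cov{X_0}\xrightarrow{\varepsilon} M\to 0$ is a fixed resolution, then $\Omega M = \ker\varepsilon = \im d_1$ again lies in $\mod\cC$, as $\cdots\to\cov{X_2}\to\cov{X_1}\to\Omega M\to 0$ is pointwise exact. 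All exact sequences produced below are checked pointwise, i.e.\ after evaluating at each object of $\cC$, where one argues in $\Ab$ and naturality of the resulting maps is automatic; this is what lets us bypass the possibly ill-defined category $\Mod\cC$ and work only with the explicit constructions of \Cref{lem:kern,lem:rsnake,lem:dia}.

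Label the cases \textbf{(i)} $F,H\in\mod\cC$, \textbf{(ii)} $F,G\in\mod\cC$, \textbf{(iii)} $G,H\in\mod\cC$, and prove them in this order. For \textbf{(i)} I would run the horseshoe lemma: fix resolutions $P_\bullet\to F$ and $R_\bullet\to H$ by representables, lift $R_0\to H$ along $G\twoheadrightarrow H$, and check that the resulting $\cC$-morphism $P_0\oplus R_0\to G$ is a pointwise epimorphism whose pointwise kernel sits in a pointwise exact sequence $0\to\Omega F\to\Omega G\to\Omega H\to 0$; since $\Omega F$ and $\Omega H$ are resolved by the truncations of $P_\bullet$ and $R_\bullet$, iterating this construction degreewise assembles a resolution $P_\bullet\oplus R_\bullet\to G$, so $G\in\mod\cC$. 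For \textbf{(ii)}, fix a resolution of $G$ with zeroth term $\cov{X_0}$ and put $\Omega G=\ker(\cov{X_0}\twoheadrightarrow G)\in\mod\cC$; composing with the epimorphism $G\twoheadrightarrow H$ yields a pointwise epimorphism $\cov{X_0}\twoheadrightarrow H$ whose pointwise kernel $K$ fits, by a pointwise snake argument, into a pointwise exact sequence $0\to\Omega G\to K\to F\to 0$. By case \textbf{(i)} this gives $K\in\mod\cC$, and splicing a resolution of $K$ onto $\cov{X_0}\twoheadrightarrow H$ produces a resolution of $H$ by representables.

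For \textbf{(iii)}, write the sequence as $0\to F\to G\xrightarrow{g} H\to 0$ and fix resolutions of $G$ and $H$ with zeroth terms $\cov{X_0}$ and $\cov{Y_0}$ and first syzygies $\Omega G,\Omega H\in\mod\cC$. Lifting the cover $\cov{Y_0}\twoheadrightarrow H$ along $g$ and combining with the cover $\cov{X_0}\twoheadrightarrow G$ yields a pointwise epimorphism $\rho\colon\cov{X_0}\oplus\cov{Y_0}\twoheadrightarrow G$; consider also $g\rho\colon\cov{X_0}\oplus\cov{Y_0}\twoheadrightarrow H$. Using that $\cov{X_0}$ and $\cov{Y_0}$ are projective one can absorb the two lifts by automorphisms of $\cov{X_0}\oplus\cov{Y_0}$ of shape $\sm{\id&\ast\\0&\id}$ and $\sm{\id&0\\\ast&\id}$, which identifies $\ker\rho\cong\Omega G\oplus\cov{Y_0}$ and $\ker(g\rho)\cong\cov{X_0}\oplus\Omega H$; both are in $\mod\cC$. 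A final pointwise snake argument then gives a pointwise exact sequence $0\to\ker\rho\to\ker(g\rho)\to F\to 0$, whence $F\in\mod\cC$ by case \textbf{(ii)}.

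The step I expect to be the main obstacle is case \textbf{(i)}: one must verify, purely through pointwise exactness and the hand-built kernels, cokernels and direct sums of the preceding subsection rather than through the abelian structure of $\Mod\cC$, that the lifted map $P_0\oplus R_0\to G$ is a pointwise epimorphism and that its kernel really does produce the next sequence $0\to\Omega F\to\Omega G\to\Omega H\to 0$, and then to organise the degreewise output into an honest complex of representables resolving $G$. Everything else reduces to routine pointwise diagram chases in $\Ab$; this is the adaptation to general $\cC$ of the proof of \cite[Proposition 2.6]{Eno17}.
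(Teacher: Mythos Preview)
Your proposal is correct and is precisely the adaptation the paper has in mind: the paper does not give its own proof but simply states that the result follows from \cite[Proposition~2.6]{Eno17} in the essentially small case and that the argument there can be adapted when $\cC$ is not essentially small. Your sketch carries out exactly this adaptation, running the horseshoe and snake lemmas pointwise in $\Ab$ and using \Cref{lem:projlift} in place of genuine projectivity; the only cosmetic remark is that in case \textbf{(i)} the resolution of $G$ has upper-triangular rather than diagonal differentials, which you implicitly acknowledge when flagging that step as the main obstacle.
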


In the following lemma the assumption that $\cC$ is idempotent complete is crucial to show that $\proj \cC$ consists up to isomorphism precisely of the representable functors.

\begin{lemma}\label{cor:exactcat}
    Let $\sX$ be the class of all pointwise short exact sequences in $\mod \cC$.
    Then $(\mod \cC, \sX)$ is an idempotent complete exact category with enough projective objects $\proj \cC$, which are up to natural equivalence precisely the representable functors $\cov{X}$ for $X \in \cC$.
\end{lemma}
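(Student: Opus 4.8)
The plan is to present $(\mod\cC,\sX)$ as a full additive, extension-closed subcategory of an ambient ``pointwise abelian'' category and to read everything else off the defining resolutions; when $\cC$ is essentially small this is precisely \cite[Proposition~2.6]{Eno17}, and the argument I have in mind is a mild adaptation to arbitrary $\cC$. First I would record that $\Mod\cC$ behaves like an abelian category: it is one whenever it is a well-defined category, and in general the pointwise kernels, cokernels, images, pullbacks and pushouts constructed above turn the pointwise short exact sequences into its (maximal) exact structure, the set-theoretic scruples being absorbed by the identification of $\mod\cC$ with a subcategory of $\KC$ noted after \Cref{lem:fresfunctors}. By \Cref{lem:syzygy}, $\mod\cC$ is closed under extensions in $\Mod\cC$ (and is additive, the direct sum of two resolutions resolving a direct sum); hence, by the standard fact that an extension-closed full additive subcategory of an exact category is again exact with conflations the short exact sequences all of whose terms lie in it (due to Quillen; see e.g.\ B\"uhler's survey on exact categories), $(\mod\cC,\sX)$ is an exact category.

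Next I would pin down the projectives. By the Yoneda consequence \Cref{lem:projlift} every representable $\cov{X}$ lifts along pointwise epimorphisms, so it is projective in $\Mod\cC$ and hence in $\mod\cC$. For $F\in\mod\cC$ the defining resolution $\cdots\to\cov{X_1}\to\cov{X_0}\to F\to0$ furnishes a deflation $\cov{X_0}\twoheadrightarrow F$ whose kernel $\Omega F$ is resolved by the tail $\cdots\to\cov{X_2}\to\cov{X_1}\to\Omega F\to0$ and so again lies in $\mod\cC$; thus $\mod\cC$ has enough projectives. Conversely, if $P\in\mod\cC$ is projective, then splitting such a deflation $\cov{X}\twoheadrightarrow P$ exhibits $P$ as a direct summand of a representable functor, and here idempotent completeness of $\cC$ is used: by the Yoneda lemma $\End_{\Mod\cC}(\cov{X})\cong\cC(X,X)$, the idempotent of $\cC(X,X)$ cutting out $P$ splits through some $Y\in\cC$, and its image is $\cov{Y}$, so that $P\cong\cov{Y}$. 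Hence $\proj\cC$ is exactly the class of representable functors.

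It remains to verify that $(\mod\cC,\sX)$ is idempotent complete. Since an idempotent $e$ of $F\in\mod\cC$ splits in $\Mod\cC$ as $F\cong\im e\oplus\ker e$, and a split short exact sequence is pointwise exact, it suffices to show that $\mod\cC$ is closed under direct summands in $\Mod\cC$. Let $F$ be a summand of $H=F\oplus G\in\mod\cC$ and pick a representable pointwise epimorphism $\cov{Y}\twoheadrightarrow H$; its components $g_F\colon\cov{Y}\twoheadrightarrow F$ and $g_G\colon\cov{Y}\twoheadrightarrow G$ are again pointwise epimorphisms, and a short pointwise diagram chase shows the sum map $\ker g_F\oplus\ker g_G\to\cov{Y}$ is a pointwise epimorphism with kernel $\ker(\cov{Y}\to H)=\Omega H$. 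As $\cov{Y}$ is projective this sequence splits, so $\ker g_F$ is a direct summand of $\cov{Y}\oplus\Omega H$, which lies in $\mod\cC$ by \Cref{lem:syzygy}. Iterating, with $(F,H)$ replaced by $(\ker g_F,\ \cov{Y}\oplus\Omega H)$, produces a pointwise exact sequence $\cdots\to\cov{Y_1}\to\cov{Y_0}\to F\to0$, whence $F\in\mod\cC$; notice this step does not itself use idempotent completeness of $\cC$.

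The hard part is the last step: once \Cref{lem:syzygy} and \Cref{lem:projlift} are in hand the exact structure and the identification of the projectives are essentially formal, whereas closure of $\mod\cC$ under direct summands does not follow from the two-out-of-three property alone and requires the iterative construction of a representable resolution of a summand sketched above. (Alternatively one could lift the splitting idempotent to a homotopy idempotent on a representable resolution and split it in the homotopy category of bounded-above complexes of representables, but the direct argument is cleaner.)
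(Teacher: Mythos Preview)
Your proposal is correct and follows precisely the approach the paper indicates: the paper gives no proof of its own but refers to \cite[Proposition~2.6]{Eno17} for the essentially small case and asserts that the argument adapts, which is exactly what you carry out. Your detailed treatment of closure under direct summands (the iterative resolution of a summand via the split sequence $0\to\Omega H\to\ker g_F\oplus\ker g_G\to\cov{Y}\to0$) fills in the one step the paper leaves entirely to the reader.
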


    We will make use of the fact that giving a $\cC$-morphism $\cov{X} \to \cov{Y}$ is the same as giving a morphism $X \to Y$ for any pair $X,Y \in \cC$ because $\cC \cong \proj \cC$ via $\cov{-} \colon \cC \xrightarrow{\sim} \proj \cC$.
    We have the following corollary of the description of the projective dimension in exact categories via the vanishing of the $\Ext$-functor and the long exact $\Ext$-sequence.
\begin{lemma}\label{lem:syzygy2}
    If $0 \to F \to \cov{X} \to G \to 0$ is a pointwise exact sequence in $\mod \cC$ with $X \in \cC$ and $G \notin \proj \cC$ then $\pdim_{\mod \cC} F = \pdim_{\mod \cC} G - 1$, where $\infty - 1 \coloneqq \infty$. 
\end{lemma}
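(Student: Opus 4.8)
The plan is to use that $\cov{X}$ is a projective object of the exact category $(\mod \cC, \sX)$ by \Cref{cor:exactcat}, so that the given pointwise short exact sequence $0 \to F \to \cov{X} \to G \to 0$ realises $F$ as a first syzygy of $G$. Fixing an arbitrary $H \in \mod \cC$ and applying the long exact sequence for $\Ext^{\bullet}_{\mod \cC}(-, H)$, the vanishing $\Ext^i_{\mod \cC}(\cov{X}, H) = 0$ for all $i \geq 1$ turns the connecting homomorphisms into isomorphisms $\Ext^i_{\mod \cC}(F, H) \cong \Ext^{i+1}_{\mod \cC}(G, H)$, natural in $H$, for every $i \geq 1$. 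Here $\Ext^{\bullet}_{\mod \cC}$ and its long exact sequence are available because $(\mod \cC, \sX)$ is an exact category with enough projectives (\Cref{cor:exactcat}) and every object of $\mod \cC$, in particular $F$ and $G$, admits a projective resolution by construction; the potential largeness of $\cC$ is harmless since $\mod \cC$ embeds into $\KC$.

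Next I would invoke the standard characterisation of projective dimension in an exact category with enough projectives: for $M \in \mod \cC$ one has $\pdim_{\mod \cC} M = \sup \{\, i \geq 1 \mid \Ext^i_{\mod \cC}(M, -) \neq 0 \,\}$, interpreting the supremum of the empty set as $0$ (which occurs exactly when $M$ is projective) and that of an unbounded set as $\infty$. Since $G \notin \proj \cC$, \Cref{cor:exactcat} gives that $G$ is not projective, so $\Ext^1_{\mod \cC}(G, -) \neq 0$, whence $\pdim_{\mod \cC} G \geq 1$ and $\pdim_{\mod \cC} G - 1$ is a well-defined element of $\NN \cup \{\infty\}$.

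It then remains to transport the shift isomorphisms along this characterisation. If $d \coloneqq \pdim_{\mod \cC} G$ is finite, then $\Ext^{d+1}_{\mod \cC}(G, -) = 0$ forces $\Ext^{d}_{\mod \cC}(F, -) = 0$ and hence $\pdim_{\mod \cC} F \leq d - 1$; if moreover $d \geq 2$ then $\Ext^{d}_{\mod \cC}(G, -) \neq 0$ yields $\Ext^{d-1}_{\mod \cC}(F, -) \neq 0$, so equality holds, while if $d = 1$ the isomorphism $\Ext^1_{\mod \cC}(F,-) \cong \Ext^2_{\mod \cC}(G,-) = 0$ shows $F$ is projective, i.e.\ $\pdim_{\mod \cC} F = 0 = d - 1$. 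If instead $\pdim_{\mod \cC} G = \infty$, then $\Ext^{i}_{\mod \cC}(G, -) \neq 0$ for arbitrarily large $i$, hence $\Ext^{i-1}_{\mod \cC}(F, -) \neq 0$ for arbitrarily large $i$, so $\pdim_{\mod \cC} F = \infty = \infty - 1$. I do not anticipate a real obstacle: the argument is the usual syzygy/dimension-shift, and the only points deserving care are the legitimacy of $\Ext^{\bullet}_{\mod \cC}$ in the possibly non-small setting and the bookkeeping of the degenerate case $d = 1$ together with the $\infty - 1 = \infty$ convention.
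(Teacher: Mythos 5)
Your proof is correct and is exactly the argument the paper has in mind: the paper gives no separate proof, stating the lemma as a corollary of the $\Ext$-vanishing characterisation of projective dimension together with the long exact $\Ext$-sequence, which is precisely the dimension-shifting you carry out (including the careful handling of the $d=1$ and $d=\infty$ cases).
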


Finally, we occasionally use the following well-known definition.

\begin{definition}[Cycles, boundaries and homology]
    Given a complex $P_\bullet$ in $\K(\proj \cC)$ we can factorise $\smash{{d^P_i} = \kerm{{d^P_{i-1}}} \beta^P_i}$ for any $i \in \ZZ$, where $\beta^P_i \colon X_i \to \ker d^P_{i-1}$ is uniquely determined by the universal property of the kernel.
    For $i \in \ZZ$ we define 
    \begin{align*}
        \operatorname{Z}_i {P_\bullet} &\coloneqq \ker d^P_{i}, & \operatorname{B}_i {P_\bullet} &\coloneqq \im d^P_{i+1}, & \Hm_i {P_\bullet} &\coloneqq \cok \beta^P_i.
    \end{align*}
    Similarly we define $Z^i P^\bullet$, $B^i P^\bullet$ and $H^i P^\bullet$ for a cocomplex $P^\bullet$ in $\K(\proj \cC)$.
\end{definition}

\subsection{\texorpdfstring{$n$}{n}-Exact categories}
In this section we recall important notions concerning $n$-exact categories.
We also introduce the categories $\XnC$, $\KXnC$ and $\exnC$ as well as the functors $\PCn$, $\RCn$ and $\OCn$ which are crucial in \Cref{sec:funapproach}.

Recall the notions of \emph{$n$-kernels} and \emph{$n$-cokernels} from \cite[Definition 2.2]{Jas16}.
To simplify the notation we introduce the following definitions, cf.\s\ also \cite[Definition 2.4]{Jas16}.
\begin{definition}\label{def:nker}
A \emph{\nker} in $\cC$ is a complex
\begin{equation}\label{eq:nexact}
    \begin{tikzcd}[column sep = {5em,between origins}]
        X_\bullet \mathrlap{\colon} & X_{n+1} \ar[r, "d^X_{n+1}"] & X_n \ar[r, "d^X_n"] & \cdots \ar[r, "d^X_{2}"] & X_1 \ar[r, "d^X_1"] & X_{0}
    \end{tikzcd}
\end{equation}
such that $(d^X_{n+1}, \dots, d^X_{2})$ is an $n$-kernel of $d^X_1$.
Dually, we define \emph{\ncokers}.
A complex which is a left and \ncoker is called an \emph{$n$-exact sequence}.
\end{definition}

\begin{definition}
   Let $\XnC$ be the category of all $n$-exact sequences in $\cC$ with morphisms given by morphisms of complexes and the usual composition. 
   Let $\KXnC$ be the homotopy category of $\XnC$, that is the ideal quotient of $\XnC$ by the null-homotopic morphisms.
\end{definition}

    Recall that $\XnC$ and $\KXnC$ have the same objects.
    Furthermore, $\XnC$ is closed under homotopy equivalence in the category $\nC$ of all $n$-term complexes, i.e.\s\ of all complexes of the shape as \eqref{eq:nexact}, by \cite[Proposition 2.5]{Jas16}.
    Thus $\KXnC$ is closed under isomorphism in the homotopy category $\mathbf{K}_n(\cC)$ of $\nC$.
    It will be convenient for us to identify classes of $n$-exact sequences which are closed under homotopy equivalence in $\nC$ with isomorphism closed classes of $\KXnC$. 

    Recall that a morphism $f \in \cC(X, Y)$ may be interpreted as a morphism $f \in \cC^{\op}(Y,X)$ and that we have $\con{f} = \cC(f,-) = \cC^{\op}(-,f) = [f]_{\cC^{\op}}$ in $\mod \cC^{\op}$.
    As the definition of $n$-exact sequences is self-dual, we have the following.
\begin{lemma}\label{lem:op}
    By formally reversing the direction of all arrows and shifting, we obtain a duality $\XnC \to \XnCop$ which induces a duality $\OCn \colon \KXnC \to \KXnCop$. 
    Moreover, $\OCn$ and $\OCnop$ are mutually inverse to each other.
\end{lemma}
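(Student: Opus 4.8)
The plan is to construct the functor explicitly at the level of $n$-term complexes, check that it respects the $n$-exactness conditions, functoriality and homotopies, and then observe that it is strictly involutive, so that its analogue for $\cC^{\op}$ is a genuine two-sided inverse.

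First I would define the object map. Given an $n$-exact sequence $X_\bullet$ as in \eqref{eq:nexact}, reverse every arrow and relabel degrees by the involution $i \mapsto n+1-i$; concretely, set $(X^{\op})_i \coloneqq X_{n+1-i}$ and let $d^{X^{\op}}_i$ be $d^X_{n+2-i}$ read as a morphism $(X^{\op})_i = X_{n+1-i} \to X_{n+2-i} = (X^{\op})_{i-1}$ of $\cC^{\op}$. A one-line index computation using $d^X_j d^X_{j+1} = 0$ shows that $X^{\op}_\bullet$ is a complex in $\cC^{\op}$. Since Jasso's $n$-kernels and $n$-cokernels are defined dually to one another, the assertion ``$(d^X_{n+1}, \dots, d^X_2)$ is an $n$-kernel of $d^X_1$ in $\cC$'' is verbatim the assertion ``$(d^{X^{\op}}_n, \dots, d^{X^{\op}}_1)$ is an $n$-cokernel of $d^{X^{\op}}_{n+1}$ in $\cC^{\op}$'', and symmetrically; hence $X_\bullet$ is a \nker (resp.\ a \ncoker) in $\cC$ if and only if $X^{\op}_\bullet$ is a \ncoker (resp.\ a \nker) in $\cC^{\op}$, so $X_\bullet \in \XnC$ gives $X^{\op}_\bullet \in \XnCop$.

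Next I would handle morphisms and homotopies. A morphism of complexes $f_\bullet \colon X_\bullet \to Y_\bullet$ is a family $f_i \colon X_i \to Y_i$ commuting with differentials; reading the $f_i$ backwards and relabelling gives a family $f^{\op}_j \coloneqq f_{n+1-j} \colon (Y^{\op})_j \to (X^{\op})_j$ commuting with the opposite differentials, that is a morphism $f^{\op}_\bullet \colon Y^{\op}_\bullet \to X^{\op}_\bullet$ in $\XnCop$. This assignment fixes identities and reverses composition, so it is a contravariant functor $\XnC \to \XnCop$; because $(\cC^{\op})^{\op} = \cC$ and $i \mapsto n+1-i$ is an involution, applying the construction twice returns $X_\bullet$ and $f_\bullet$ on the nose, so the analogous functor $\XnCop \to \XnC$ is a strict two-sided inverse and in particular $\XnC \to \XnCop$ is a duality. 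It remains to pass to the homotopy quotients: the homotopy relation for $n$-term complexes, including its special equations in the extreme degrees $0$ and $n+1$, is itself invariant under arrow reversal and the relabelling, so a homotopy $f_\bullet \simeq g_\bullet$ is carried to a homotopy $f^{\op}_\bullet \simeq g^{\op}_\bullet$; thus the duality descends to $\OCn \colon \KXnC \to \KXnCop$ and its strict inverse descends to $\OCnop$, giving both claims.

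I do not anticipate a genuine obstacle: the statement is essentially bookkeeping, and the only point requiring care is to apply the involutive relabelling $i \mapsto n+1-i$ uniformly to terms, differentials, morphism components and homotopy components, and to invoke the self-duality already built into Jasso's definitions of $n$-kernel, $n$-cokernel and homotopy rather than reproving it.
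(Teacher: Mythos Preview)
Your proposal is correct and follows essentially the same approach as the paper's sketch: both define the duality by the explicit formulas $(X^{\op})_i = X_{n+1-i}$, $d^{X^{\op}}_i = d^X_{n+2-i}$, $f^{\op}_j = f_{n+1-j}$, invoke the self-dual nature of $n$-exact sequences to see that $X^{\op}_\bullet \in \XnCop$, observe that the construction is strictly involutive, and then check that homotopies are preserved so that everything descends to the homotopy categories. Your write-up is somewhat more detailed (e.g.\ you spell out the boundary behaviour of the homotopy relation), but there is no substantive difference in strategy.
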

\begin{sketch}
    Explicitly, the duality maps a complex $X_\bullet \in \XnC$ with differential $d^X_\bullet$ to the complex $X_{n + 1 - \bullet} \in \XnCop$ with differential $d^X_{n +2 -\bullet}$ and a morphism $f_\bullet \colon X_\bullet \to Y_\bullet$ in $\XnC$ to $f_{n+1-\bullet} \colon Y_{n+1-\bullet} \to X_{n+1-\bullet}$ in $\XnCop$.
    By the self dual nature of $n$-exact sequences this is well-defined and it is clear that an inverse $\XnCop \to \XnC$ is given by the same formulas.
    A direct calculation shows that these functors are additive, preserve homotopy equivalences and thus descend to dualities $\KXnC \leftrightarrow \KXnCop$.
\end{sketch}

\begin{remark}\label{rem:cohomology}
    By writing all definitions out explicitly we have 
       \begin{align*} \Hm^{n+1} \con[Y]{X_\bullet} = \cok \con[Y]{d^X_{n+1}} = \cok [d^X_{n+1}]_{\cC^{\op}}^Y = \Hm_0 [\OCn(X_\bullet)]^Y_{\cC^{\op}} \end{align*}
    naturally in $X_\bullet \in \KXnC^{\op}$ and $Y \in \cC$, using that $X_\bullet$ vanishes in all degrees above $n+1$.
\end{remark}    

    We want to interpret $n$-exact sequences as projective resolutions of certain functors and therefore need the following lemma, which is similar to \cite[Proposition 3.4]{Gul24}.

   \begin{lemma}\label{lem:nexact}
      Let $X_\bullet$ be a complex in $\nC$ and $F \coloneqq \cok \cov{d^X_1}$. Then the statements 
      \begin{enumerate}[label={{(\alph*)}}]
         \item $F \in \mod \cC$ and $\cov{X_\bullet}$ is a projective resolution of $F$ in $\mod \cC$, and\label{item:projres}
         \item $X_\bullet$ is a \nker\label{item:nkernel}
      \end{enumerate}
      are equivalent. 
      If the statement \ref{item:projres} holds then the statements
      \begin{enumerate}[resume, label={{(\alph*)}}]
         \item $\Ext_{\mod \cC}^i(F, \cov{Y}) = 0$ for all $i = 0, \dots, n$ and $Y \in \cC$, and \label{item:allext}
         \item $X_\bullet$ is an $n$-exact sequence\label{item:nexact}
      \end{enumerate}
      are equivalent as well.
   \end{lemma}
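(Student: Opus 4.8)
The plan is to reformulate both equivalences as statements about pointwise exactness of complexes of $\cC$-modules and then feed them into the exact-category structure on $\mod\cC$ from \Cref{cor:exactcat}. Two translations do all the work. First, evaluating $\cov{X_\bullet}$ at $T\in\cC$ gives precisely the complex of abelian groups $\cC(T,X_{n+1})\to\cdots\to\cC(T,X_0)$, so by \cite[Definition 2.2]{Jas16} the statement that $X_\bullet$ is a \nker is literally the statement that $\cov{X_\bullet}$ is pointwise exact at $\cov{X_i}$ for $i=1,\dots,n+1$, where pointwise exactness at $\cov{X_{n+1}}$ is understood as $\cov{d^X_{n+1}}$ being a pointwise monomorphism. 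Second, by the Yoneda isomorphism $\Hom_{\mod\cC}(\cov{X},\cov{Y})\cong\cC(X,Y)$, the cochain complex $\Hom_{\mod\cC}(\cov{X_\bullet},\cov{Y})$ is canonically $0\to\cC(X_0,Y)\to\cC(X_1,Y)\to\cdots\to\cC(X_{n+1},Y)$ with $\cC(X_j,Y)$ in degree $j$; dually to the first translation, $X_\bullet$ being a \ncoker means exactly that this complex is exact in degrees $0,\dots,n$ for every $Y\in\cC$, in particular that $\cC(X_0,Y)\to\cC(X_1,Y)$ is a monomorphism.

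For \ref{item:projres} $\Leftrightarrow$ \ref{item:nkernel}: by construction $\cov{X_1}\to\cov{X_0}\to F\to 0$ is already pointwise exact at $\cov{X_0}$ and at $F$, and each $\cov{X_i}$ lies in $\proj\cC$ by \Cref{cor:exactcat}, so $\cov{X_\bullet}$ is a projective resolution of $F$ if and only if it is in addition pointwise exact at $\cov{X_1},\dots,\cov{X_{n+1}}$, i.e.\ (by the first translation) if and only if \ref{item:nkernel} holds. Moreover, in that case $F\in\mod\cC$ is automatic: one obtains a pointwise exact sequence $0\to\cov{X_{n+1}}\to\cdots\to\cov{X_0}\to F\to 0$ whose representable terms all lie in $\mod\cC$, and decomposing it into pointwise short exact sequences and applying \Cref{lem:syzygy} inductively, working from the $\cov{X_{n+1}}$-end towards $F$, yields $F\in\mod\cC$. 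The converse is immediate, as \ref{item:projres} already includes the pointwise exactness of $\cov{X_\bullet}$ at $\cov{X_1},\dots,\cov{X_{n+1}}$.

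For \ref{item:allext} $\Leftrightarrow$ \ref{item:nexact} under hypothesis \ref{item:projres}: since \ref{item:projres} implies \ref{item:nkernel} by the above, $X_\bullet$ is already a \nker, so \ref{item:nexact} is equivalent to $X_\bullet$ being a \ncoker. Using that $\cov{X_\bullet}$ is a projective resolution of $F$, we compute $\Ext^i_{\mod\cC}(F,\cov{Y})$ as the $i$-th cohomology of $\Hom_{\mod\cC}(\cov{X_\bullet},\cov{Y})$, which by the second translation is the complex $0\to\cC(X_0,Y)\to\cdots\to\cC(X_{n+1},Y)$; in particular $\Ext^0_{\mod\cC}(F,\cov{Y})=\Hom_{\mod\cC}(F,\cov{Y})$ is the kernel of $\cC(X_0,Y)\to\cC(X_1,Y)$. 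Hence $\Ext^i_{\mod\cC}(F,\cov{Y})=0$ for all $i=0,\dots,n$ and all $Y\in\cC$ if and only if that complex is exact in degrees $0,\dots,n$ for all $Y$, which by the second translation is exactly the \ncoker condition. Therefore \ref{item:allext} $\Leftrightarrow$ \ref{item:nexact}.

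The only delicate part is the bookkeeping around the dualization and the degree conventions — in particular that the $n$-cokernel axiom forces $\cC(X_0,Y)\to\cC(X_1,Y)$ to be a monomorphism, matching the vanishing of $\Ext^0=\Hom$, and that the $n$-kernel axiom includes $\cov{d^X_{n+1}}$ being a pointwise monomorphism — together with the standard fact that a finite projective resolution still computes $\Ext^i_{\mod\cC}(F,-)$ throughout the range $i=0,\dots,n$; this last point is exactly what \ref{item:projres} supplies, which is why the $\Ext$-vanishing equivalence is only claimed under that hypothesis. Everything else is a pointwise diagram chase in $\Ab$ together with the Yoneda isomorphism.
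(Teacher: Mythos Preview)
Your proof is correct and follows essentially the same approach as the paper's: both arguments reduce \ref{item:projres}$\Leftrightarrow$\ref{item:nkernel} to the definition of a \nker via pointwise exactness of $\cov{X_\bullet}$, and both compute $\Ext^i_{\mod\cC}(F,\cov{Y})$ via the Yoneda-identified cocomplex $\con[Y]{X_\bullet}$ to obtain \ref{item:allext}$\Leftrightarrow$\ref{item:nexact}. The only cosmetic difference is that you invoke \Cref{lem:syzygy} to place $F$ in $\mod\cC$, whereas the paper simply observes that the finite pointwise exact resolution (extended by zeros) already witnesses $F\in\mod\cC$ directly from \Cref{lem:fresfunctors}.
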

   \begin{proof}
        Applying $\cov{-}$ to $X_\bullet$ we obtain a complex
      \begin{equation}\label{eq:YonedaEmbedding}
         \begin{tikzcd}[column sep = 3em]
            0 \ar[r] &[-1em] \cov{X_{n+1}} \ar[r, "{\cov{d^X_{n+1}}}"] & \cov{X_n} \ar[r, "{\cov{d^X_n}}"] &[-.3em] \cdots \ar[r, "{\cov{d^X_{2}}}"] &[.4em] \cov{X_1} \ar[r, "{\cov{d^X_1}}"] & \cov{X_{0}} \ar[r] &[-1em] 0
         \end{tikzcd}
      \end{equation}
      of projective functors in $\mod \cC$. 
      This is a projective resolution of $F$, which is then in $\mod \cC$, if and only if it is pointwise exact everywhere except possibly at $\cov{X_{0}}$.
      This is the case if and only if $X_\bullet$ is a \nker, by definition. 

      Applying $\Hom_{\mod \cC}(-, \cov{Y})$ and the Yoneda lemma to \eqref{eq:YonedaEmbedding}, we obtain a cocomplex
      \begin{equation}\label{eq:YonedaCoEmbedding}
         \begin{tikzcd}[column sep = 3em]
            0 &[-1em] \ar[l] \con[Y]{X_{n+1}} & \ar[l, "{\con[Y]{d^X_{n+1}}}"'] \con[Y]{X_n} &[-.3em] \ar[l, "{\con[Y]{d^X_n}}"'] \cdots &[.4em] \ar[l, "{\con[Y]{d^X_{2}}}"'] \con[Y]{X_1} & \ar[l, "{\con[Y]{d^X_1}}"'] \con[Y]{X_{0}} &[-1em] \ar[l] 0\rlap{.} 
         \end{tikzcd}
      \end{equation}
      Under the assumption that $\cov{X_\bullet}$ is a projective resolution of $F$ the sequence \eqref{eq:YonedaCoEmbedding} is exact at $\con[Y]{X_{i}}$ if and only if $\Ext_{\mod \cC}^{i}(F, \cov{Y}) = 0$.
      This is true for all $Y \in \cC$ and $i = 0,\dots,n$ if and only if $X_\bullet$ is a \ncoker, by definition. 
      Since $X_\bullet$ is already a \nker, this is the case if and only if $X_\bullet$ is an $n$-exact sequence. 
   \end{proof}

   We now want to identify the class $\KXnC$ of all $n$-exact sequences in $\cC$ with the class of functors satisfying all conditions in \Cref{lem:nexact}.

   \begin{definition}\label{not:functors} 
        We denote by $\exn \cC \subset \mod \cC$ the subcategory of all $F \in \mod \cC$ with $\pdim_{\mod \cC} F \leq n+1$ and $\Ext_{\mod \cC}^{i}(F, \cov{Y}) = 0$ for all $0 \leq i \leq n$ and all $Y \in \cC$.
   \end{definition}

    By the horseshoe lemma and the long exact $\Ext$-sequences $\exnC \subset \mod \cC$ is closed under extensions.
    In particular, the class of all pointwise exact sequences forms an exact structure on $\exnC$.
    Moreover, the following remark shows that all non-zero objects $F \in \exn \cC$ have projective dimension exactly $n+1$.

   \begin{remark}\label{rem:pdim}
      In any exact category $\cE$ with enough projective objects $\cP$ any object $M \in \cE$ with $\pdim_{\cE} M \leq n+1$ and $\Ext_{\cE}^i(M,P)$ for all $P \in \cP$ and all $0 \leq i \leq n$ satisfies either $M = 0$ or $\pdim_{\cE} M = n+1$.
      Indeed, if $0 \to P_m \to \dots \to P_0 \to M \to 0$ is an augmented projective resolution of minimal length, where $m \geq 0$ and $P_m \neq 0$, the equivalence class of $\id_{P_m} \in \Z^m [P_\bullet]^{\cE}_{P_m}$ defines a non-zero element $\Ext_{\cE}^m(M,P_m) = \Hm^m [P_\bullet]^{\cE}_{P_m}$. 
   \end{remark}
   
The dual of \Cref{lem:kern} and \Cref{lem:nexact} immediately yield the following result.

  \begin{lemma}\label{lem:resolve}
    There is a functor $\PCn\colon \KXnC \to \exn \cC$ sending an $n$-exact sequence $X_\bullet$ to $\cok \cov{d^X_1}$ and a morphism $f_\bullet \colon X_\bullet \to Y_\bullet$ to the unique $\alpha \colon \cok \cov{d^X_1} \to \cok \cov{d^Y_1}$ which satisfies $\smash{\alpha \cokm{\cov{d^X_{1}}} = \cokm{\cov{d^Y_1}} \cov{f_{0}}}$.
  \end{lemma}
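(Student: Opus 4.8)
The plan is to obtain $\PCn$ directly from the universal property of the cokernel in $\mod\cC$ (the dual of \Cref{lem:kern}) together with \Cref{lem:nexact}. For the assignment on objects, let $X_\bullet$ be an $n$-exact sequence and set $F_X\coloneqq\cok\cov{d^X_1}$. Since $X_\bullet$ is in particular a \nker, part~\ref{item:projres} of \Cref{lem:nexact} shows $F_X\in\mod\cC$ and identifies the complex \eqref{eq:YonedaEmbedding} as a projective resolution of $F_X$ with $n+2$ terms, so $\pdim_{\mod\cC}F_X\le n+1$. As \ref{item:projres} holds and $X_\bullet$ is moreover an $n$-exact sequence, the equivalence of \ref{item:allext} and \ref{item:nexact} in \Cref{lem:nexact} gives $\Ext_{\mod\cC}^i(F_X,\cov{Y})=0$ for all $0\le i\le n$ and all $Y\in\cC$; by \Cref{not:functors} this says exactly that $F_X\in\exn\cC$.

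For the assignment on morphisms, fix a morphism of complexes $f_\bullet\colon X_\bullet\to Y_\bullet$. Applying the functor $\cov{-}$ to the commuting square with rows $\cov{d^X_1},\cov{d^Y_1}$ and verticals $\cov{f_1},\cov{f_0}$ gives $\cov{f_0}\,\cov{d^X_1}=\cov{d^Y_1}\,\cov{f_1}$, hence $\cokm{\cov{d^Y_1}}\,\cov{f_0}\,\cov{d^X_1}=\cokm{\cov{d^Y_1}}\,\cov{d^Y_1}\,\cov{f_1}=0$. The sequence $\cov{X_1}\xrightarrow{\cov{d^X_1}}\cov{X_0}\xrightarrow{\cokm{\cov{d^X_1}}}\cok\cov{d^X_1}\to 0$ is pointwise exact, so the universal property of the cokernel (dual to \Cref{lem:kern}) yields a unique $\cC$-morphism $\alpha\colon\cok\cov{d^X_1}\to\cok\cov{d^Y_1}$ with $\alpha\,\cokm{\cov{d^X_1}}=\cokm{\cov{d^Y_1}}\,\cov{f_0}$; I set $\PCn(f_\bullet)\coloneqq\alpha$. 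As $\cokm{\cov{d^X_1}}$ is a pointwise, hence right-cancellable, epimorphism, this identity determines $\PCn(f_\bullet)$ uniquely, and the same cancellation shows at once that $\PCn$ preserves identities and composition (and is additive, since $\cov{-}$, $\cokm{-}$ and the chosen factorisations all are).

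Finally I would check that $\PCn$ descends to $\KXnC$, i.e.\ kills null-homotopic morphisms. If $f_\bullet$ is null-homotopic, then in degree $0$ the homotopy relation reads $f_0=d^Y_1 s_0$ for some $s_0\colon X_0\to Y_1$ (the would-be contribution from negative degrees vanishes, as there are no objects there), so $\cov{f_0}=\cov{d^Y_1}\,\cov{s_0}$ and therefore $\alpha\,\cokm{\cov{d^X_1}}=\cokm{\cov{d^Y_1}}\,\cov{f_0}=\cokm{\cov{d^Y_1}}\,\cov{d^Y_1}\,\cov{s_0}=0$; cancelling the epimorphism $\cokm{\cov{d^X_1}}$ gives $\alpha=0$. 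Hence $\PCn$ factors through the ideal quotient $\XnC\to\KXnC$. All of this is routine manipulation with universal properties; the only step that needs a moment of care is this last one, well-definedness on the homotopy quotient, which comes down entirely to the degree-$0$ component of a chain homotopy and the right-cancellability of the pointwise epimorphism $\cokm{\cov{d^X_1}}$.
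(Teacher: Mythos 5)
Your proof is correct and follows exactly the route the paper intends: the paper gives no written proof, merely stating that the result follows immediately from the dual of \Cref{lem:kern} and \Cref{lem:nexact}, and your argument is a faithful expansion of precisely those two ingredients. The extra care you take with descent to the homotopy category (the degree-$0$ component of the homotopy and right-cancellability of the pointwise epimorphism $\cokm{\cov{d^X_1}}$) is the one detail worth spelling out, and you handle it correctly.
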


    \begin{remark}\label{rem:homology}
        We have $\PCn(X_\bullet)(Y) = \Hm_0(\cov[Y]{X_\bullet})$ naturally in $X_\bullet \in \KXnC$ and $Y\in \cC^{\op}$ using that $X_\bullet$ vanishes in negative degrees.
    \end{remark}
    Recall that projective resolutions and lifts of morphisms to projective resolutions exist and are unique up to homotopy equivalence.
    Thus we have the following.
   \begin{lemma}\label{lem:resex}
    There is a functor $\RCn \colon \exn \cC \to \KXnC$ sending  a functor $F \in \exnC$ to an $n$-exact sequence $X^F_\bullet$ such that there is an augmented projective resolution 
      \begin{equation*}
         \begin{tikzcd}[column sep = 3em]
            0 \ar[r] &[-1.5em] \cov{X^F_{n+1}} \ar[r, "{\cov{d^{X^F}_{n+1}}}"] & \cov{X^F_n} \ar[r, "{\cov{d^{X^F}_n}}"] &[-.3em] \cdots \ar[r, "{\cov{d^{X^F}_{2}}}"] &[.4em] \cov{X^F_1} \ar[r, "{\cov{d^{X^F}_1}}"] & \cov{X^F_{0}} \ar[r, "\varepsilon_F"] &[-1em] F \ar[r] &[-1.5em] 0
         \end{tikzcd}
      \end{equation*}
    and a morphism $\alpha \colon F \to G$ to a morphism $f^\alpha_\bullet \colon X^F_\bullet \to X^G_\bullet$ such that $\cov{f^\alpha_\bullet}$ is a lift of $\alpha$, that is $\alpha \varepsilon_F = \varepsilon_G \cov{f^\alpha_0}$.
    Any such functor $\RCn$ is quasi-inverse to $\PCn$. 
    \end{lemma}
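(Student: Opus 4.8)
The plan is to construct one functor $\RCn$ explicitly from projective resolutions in $\mod\cC$, to observe that the verifications below use only the properties spelled out in the statement, and then to conclude from the fact that $\PCn$ is an equivalence. For the object part, let $F\in\exn\cC$. By \Cref{cor:exactcat} the category $\mod\cC$ is exact with enough projectives, and $\pdim_{\mod\cC}F\le n+1$, so there is an augmented projective resolution $0\to P_{n+1}\to\cdots\to P_0\xrightarrow{\varepsilon}F\to 0$ (with zero terms inserted if $\pdim_{\mod\cC}F<n+1$, in particular the zero complex if $F=0$). By \Cref{cor:exactcat} again each $P_i$ is isomorphic to $\cov{X^F_i}$ for some $X^F_i\in\cC$, and since $\cov{-}\colon\cC\xrightarrow{\sim}\proj\cC$ is an equivalence the differentials have the form $\cov{d^{X^F}_{i+1}}$ for unique $\cC$-morphisms $d^{X^F}_{i+1}\colon X^F_{i+1}\to X^F_i$. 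This gives a complex $X^F_\bullet$ in $\nC$ together with the augmented projective resolution required in the statement, where $\varepsilon_F\coloneqq\varepsilon$. Since $\cov{X^F_\bullet}$ is a projective resolution of $F$, the equivalence of \ref{item:projres} and \ref{item:nkernel} in \Cref{lem:nexact} shows $X^F_\bullet$ is a \nker; and since $F\in\exn\cC$ forces $\Ext^i_{\mod\cC}(F,\cov{Y})=0$ for $0\le i\le n$ and all $Y\in\cC$, the equivalence of \ref{item:allext} and \ref{item:nexact} shows $X^F_\bullet$ is an $n$-exact sequence, whose class is $\RCn(F)\in\KXnC$.

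For the morphism part, given $\alpha\colon F\to G$ the existence of lifts along projective resolutions (recalled just before the lemma) yields a chain map $\cov{X^F_\bullet}\to\cov{X^G_\bullet}$ over $\alpha$, unique up to chain homotopy; as $\cov{-}$ is fully faithful this is $\cov{f^\alpha_\bullet}$ for a unique morphism of complexes $f^\alpha_\bullet\colon X^F_\bullet\to X^G_\bullet$ in $\nC$, which lies in $\XnC$ because its source and target do and which satisfies $\alpha\varepsilon_F=\varepsilon_G\cov{f^\alpha_0}$. Uniqueness up to homotopy of the lift, transported back along the fully faithful $\cov{-}$, shows that the class $[f^\alpha_\bullet]$ in $\KXnC$ depends only on $\alpha$ and is compatible with sums, composition and identities; thus $\RCn(\alpha)\coloneqq[f^\alpha_\bullet]$ defines an additive functor $\RCn\colon\exn\cC\to\KXnC$ of the kind required.

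To finish I would record that $\PCn$ is an equivalence and then deduce the claim formally. It is essentially surjective, since every $F\in\exn\cC$ is isomorphic to $\cok\cov{d^{X^F}_1}=\PCn(X^F_\bullet)$, and it is full and faithful by the existence and uniqueness up to homotopy of lifts along projective resolutions: a morphism $\PCn(X_\bullet)\to\PCn(Y_\bullet)$ lifts to a chain map $\cov{X_\bullet}\to\cov{Y_\bullet}$, hence to a morphism in $\XnC$ which $\PCn$ sends back to it by \Cref{lem:resolve}; and a morphism of $\XnC$ sent to $0$ by $\PCn$ induces the zero map on resolved objects, so it lifts the zero map and is null-homotopic — both statements transported along the fully faithful $\cov{-}$. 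Next, for $F\in\exn\cC$ the relation $\varepsilon_F\cov{d^{X^F}_1}=0$ together with pointwise exactness of $\cov{X^F_1}\to\cov{X^F_0}\to F\to 0$ shows, via the universal property of the cokernel, that $\varepsilon_F$ induces an isomorphism $\PCn\RCn(F)=\cok\cov{d^{X^F}_1}\xrightarrow{\sim}F$; this is natural because the defining identity $\PCn(f^\alpha_\bullet)\,\cokm{\cov{d^{X^F}_1}}=\cokm{\cov{d^{X^G}_1}}\cov{f^\alpha_0}$ of \Cref{lem:resolve}, combined with $\alpha\varepsilon_F=\varepsilon_G\cov{f^\alpha_0}$, gives exactly the square that must commute. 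This last argument used only the properties listed in the statement, so it yields $\PCn\RCn\cong\id_{\exn\cC}$ for any functor $\RCn$ of the described kind; since $\PCn$ is an equivalence, this forces any such $\RCn$ to be a quasi-inverse of $\PCn$. (Equivalently, and still only using the stated properties, $\RCn\PCn(X_\bullet)=X^F_\bullet\cong X_\bullet$ in $\KXnC$ because $\cov{X_\bullet}$ and $\cov{X^F_\bullet}$ are both projective resolutions of $F\coloneqq\PCn(X_\bullet)$ by \Cref{lem:nexact}, hence homotopy equivalent, and this equivalence transports back through $\cov{-}$.)

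I expect the only genuine care to be needed at two routine-but-fiddly spots: the systematic use of the equivalence $\cov{-}\colon\cC\xrightarrow{\sim}\proj\cC$, which rests on idempotent completeness through \Cref{cor:exactcat}, to move back and forth between complexes over $\cC$ and projective resolutions in $\mod\cC$ — in particular to transport chain homotopies, which is what makes $\RCn$ well defined on $\KXnC$ and identifies the relevant homotopy equivalences; and the bookkeeping that identifies $\cokm{\cov{d^{X^F}_1}}$ with $\varepsilon_F$, which is what promotes the isomorphism $\PCn\RCn\cong\id$ from pointwise to natural.
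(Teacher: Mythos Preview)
Your proof is correct and follows essentially the same approach as the paper: both construct $\RCn$ by choosing projective resolutions in $\mod\cC$, transport them to $\cC$ via the equivalence $\cov{-}\colon\cC\xrightarrow{\sim}\proj\cC$, invoke \Cref{lem:nexact} to land in $\KXnC$, and verify the quasi-inverse property through the universal property of the cokernel (for $\PCn\RCn\cong\id$) and uniqueness of resolutions up to homotopy (for $\RCn\PCn\cong\id$). The paper's sketch delegates the existence and homotopy-uniqueness of lifts to \cite[Proposition 12.2, Theorem 12.4, Corollary 12.5]{Bueh10}, whereas you spell these out directly and add the clean extra observation that $\PCn$ is itself an equivalence, which lets you deduce that \emph{any} $\RCn$ with the stated properties is a quasi-inverse from $\PCn\RCn\cong\id$ alone.
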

    \begin{sketch}
        The homotopy categories $\KC$ and $\K(\proj \cC)$ are equivalent via $\cov{-}$ by the Yoneda lemma.
        The constructions in \cite[Proposition 12.2 and Theorem 12.4]{Bueh10} yield a functor $\exnC \to \KC$ as the exact category $\mod \cC$ has enough projective objects given by $\proj \cC$.
        We may choose the projective resolutions such that the image of the constructed functor is in $\KXnC\subset \KC$ using  $\pdim_{\mod \cC} F \leq n+1$ for all $F \in \exnC$ and \Cref{lem:nexact}.

        A calculation using the universal property of the cokernel shows $\PCn \circ \RCn \cong \Id_{\exnC}$ and \cite[Theorem 12.4 and Corollary 12.5]{Bueh10} show that $\RCn \circ \PCn \cong \Id_{\KXnC}$.
    \end{sketch}

The following notions were originally introduced as \emph{$\sX$-admissible $n$-exact sequences}, \emph{$\sX$-admissible monomorphisms} and \emph{$\sX$-admissible epimorphisms} in \cite[Section 4.1]{Jas16}. 

\begin{definition}\label{def:confinfdef}
   Let $\sX \subset \KXnC$ be a class of $n$-exact sequences.
   \begin{enumerate}[label={{(\alph*)}}]
      \item An \emph{$\sX$-conflation} is a complex 
      which is in $\sX$.
      \item An \emph{$\sX$-inflation} (or \emph{$\sX$-deflation}) is a map $d^X_{n+1} \colon X_{n+1} \to X_n$ (or $d^X_1 \colon X_1 \to X_{0}$) that appears as the first (or last) morphism of an $\sX$-conflation $X_\bullet$ as in \eqref{eq:nexact}. 
   \end{enumerate}
\end{definition}

    Notice, if $\sX$ is closed under isomorphism in $\KXnC$ then it follows from \Cref{lem:resex} that a morphism $d^X_1$ is an $\sX$-deflation if and only if $\cok \cov{d^X_1} \in \PCn(\sX)$.
    Moreover, we have the following.
\begin{remark}\label{rem:isoclos}
    The class of $\sX$-inflations and $\sX$-deflations is closed under isomorphism for any class $\sX$ of $n$-exact sequences, which is closed under isomorphism in $\KXnC$ or even just closed under isomorphism in $\XnC$.
\end{remark}

Recall, a quasi abelian category is a category which has all kernels and cokernels and on which the class of all kernel-cokernel pairs forms an exact structure.
Inspired by this we introduce the following notion which we will show to naturally appear in examples arising from commutative rings.
   \begin{definition}
      We say that $\cC$ is \emph{quasi $n$-abelian} if all morphisms of $\cC$ admit both an $n$-kernel and an $n$-cokernel and moreover the class of all $n$-exact sequences of $\cC$ is an $n$-exact structure on $\cC$.
   \end{definition}

   \begin{remark}\label{rem:abimplqab}
        If $\cC$ is $n$-abelian then $\cC$ is quasi $n$-abelian, by \cite[Definition 3.1(A1)]{Jas16} and \cite[Theorem 4.4]{Jas16}. 
        On the other hand, $n$-abelian categories can be characterised as those quasi $n$-abelian categories $\cC$ which have the property that every finitely presented $\cC$-module (resp.\s\ $\cC^{\op}$-module) with no non-zero $\cC$-morphisms (resp.\s\ $\cC^{\op}$-morphisms) to representable functors lies in $\exnC$ (resp.\s\ $\exnCop$).
        In fact, $\exnC$ containing all finitely presented additive functor which have no non-zero $\cC$-morphisms to any representable functor is equivalent to \cite[Definition 3.1(A2${}^{\op}$)]{Jas16} by using an argument similar to \Cref{lem:nexact} to characterize the functors in question as those which are isomorphic to $\cok \cov{f}$ for an epimorphism $f$ in $\cC$.
        The dual observation then shows the above characterisation of $n$-abelian categories.
    \end{remark}
    
    In \cite[Theorem 3.6]{Gul24} a related characterization of $n$-abelian categories has been given.
    Their axiom (F1${}^{\op}$) characterises Jasso's axiom (A1${}^{\op}$) functorially while our description includes no such characterisation.
    The main difference between their axiom (F2${}^{\op}$) and our description of Jasso's axiom (A2${}^{\op}$) is that in loc.\s\ cit.\s\ epimorphisms are implicitly characterised as morphisms $f$ with $\pdim_{\mod \cC^{\op}} \cok \con{f} \leq 1$. 
    A translation between their condition of $n$-torsion-freeness and our $\Ext$-vanishing condition is then possible by using \cite[Proposition 3.4]{Gul24}.
    Similar considerations apply to their axioms (F1) and (F2).

\section{Main results}
\subsection{Weak isomorphisms vs.\s\ homotopy equivalences of \texorpdfstring{$n$}{n}-exact sequences}
Jasso introduced the notion of $n$-exact categories in \cite{Jas16}. 
In \Cref{def:nexact} below we give a different set of axioms with the major difference that we ask the class of $n$-exact sequences to be closed under homotopy equivalence instead of weak isomorphisms.
We show in \Cref{thm:thesame} that the two notions coincide.
This equivalent definition has also been independently observed by Kvamme \cite{Kva24}. 

\begin{definition}\label{def:nexact}
   Let $\sX \subset \KXnC$ be a class which is closed under isomorphism.
   Then $(\cC,\sX)$ is an $n$-exact category if the following list of axioms is satisfied.
   \begin{enumerate}
      \item[\mylabel{EA0}{(Ex0)}] The complex $0 \to 0 \to \cdots \to 0 \to 0$ is in $\sX$.
      \item[\mylabel{EA1}{(Ex1)}] The class of $\sX$-inflations is closed under composition.
      \item[\mylabel{EA1op}{(Ex1${}^{\op}$)}] The class of $\sX$-deflations is closed under composition.
      \item[\mylabel{EA2}{(Ex2)}] If there are two morphisms $d^X_{n+1}$ and $f_{n+1}$ with the same domain
      \begin{equation}\label{eq:PO}\begin{tikzcd}[ampersand replacement = \&]
      	{X_{n+1}} \& {X_{n}} \\
      	{Y_{n+1}} \& {Y_{n}},
      	\arrow["{d^X_{n+1}}", tail, from=1-1, to=1-2]
      	\arrow["{f_{n+1}}"', from=1-1, to=2-1]
      	\arrow["{d^Y_{n+1}}", dotted, tail, from=2-1, to=2-2]
      	\arrow["{f_{n}}"', dotted, from=1-2, to=2-2]
      \end{tikzcd}\end{equation}
      where $d^X_{n+1}$ is also an $\sX$-inflation, then $\sm{-d^X_{n+1} & f_{n+1}}^\top$ is an $\sX$-inflation admitting a weak cokernel $\sm{f_n & d^Y_{n+1}}$ such that $d^Y_{n+1}$ is an $\sX$-inflation.
      \item[\mylabel{EA2op}{(Ex2${}^{\op}$)}] If there are two morphisms $d^X_1$ and $f_{0}$ with the same codomain
      \begin{equation}\label{eq:PB}\begin{tikzcd}[ampersand replacement = \&]
      	{Y_{1}} \& {Y_{0}} \\
      	{X_{1}} \& {X_{0}}, 
      	\arrow["{d^Y_1}", two heads, dotted, from=1-1, to=1-2]
      	\arrow["{f_1}"', dotted, from=1-1, to=2-1]
      	\arrow["{d^X_1}", two heads, from=2-1, to=2-2]
      	\arrow["{f_{0}}"', from=1-2, to=2-2]
      \end{tikzcd}\end{equation}
      where $d^X_1$ is also an $\sX$-deflation, then $\sm{-d^X_1 & f_{0}}$ is an $\sX$-deflation admitting a weak kernel $\sm{f_1 & d^Y_1}^\top$ such that $d^Y_1$ is an $\sX$-deflation.
   \end{enumerate}
\end{definition}

\begin{remark}
    The dotted arrows make the squares \eqref{eq:PO} and \eqref{eq:PB} commute, because the composite of a morphism with any of its weak cokernels resp.\s\ weak kernels vanishes.
    Moreover, the conditions in \ref{EA2} and \ref{EA2op} imply that the diagrams \eqref{eq:PO} and \eqref{eq:PB} are weak pushout resp.\s\ weak pullback diagrams.
\end{remark}
   
Using $\OCn$ from \Cref{lem:op} we have the following remark. 

\begin{remark}\label{rem:opclear}
    An isomorphism closed class $\sX \subset \KXnC$ satisfies \ref{EA2} resp.\s\ \ref{EA1} in $\cC$ if and only if $\OCn(\sX) \subset \KXnCop$ satisfies \ref{EA2op} resp.\s\ \ref{EA1op} in $\cC^{\op}$.
    Indeed, as $\OCn$ formally reverses arrows this follows from weak kernels being dual to weak cokernels.
\end{remark} 

To show that for (idempotent complete) additive categories this definition agrees with the original one, we need the following lemma.

\begin{lemma}\label{lem:mktom-1k}
    Suppose $d^X_1 \colon X_{1} \to X_{0}$ admits an $n$-kernel and for some $1 \leq m \leq n$ there is a sequence
    \begin{equation}\begin{tikzcd}
	{X_m} & {X_{m-1}} & \cdots & {X_1} & {X_{0}},
	\arrow["{d^X_m}", from=1-1, to=1-2]
	\arrow["{d^X_{m-1}}", from=1-2, to=1-3]
	\arrow["{d^X_{2}}", from=1-3, to=1-4]
	\arrow["{d^X_{1}}", from=1-4, to=1-5]
    \end{tikzcd}\label{fig:start}\end{equation}
    where $d^X_i$ is a weak kernel of $d^X_{i-1}$ for $2 \leq i \leq m$.
    Then $d^X_m$ admits an $(n+1-m)$-kernel, that is \eqref{fig:start} can be completed to a \nker. 
\end{lemma}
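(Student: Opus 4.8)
The plan is to pass everything through the Yoneda embedding $\cov{-}$ into $\mod\cC$ and reduce the statement to a projective‑dimension bound, using \Cref{lem:nexact}. Recall the dictionary: a morphism $d\colon A\to B$ is a weak kernel of $e\colon B\to C$ precisely when $\cov{A}\xrightarrow{\cov{d}}\cov{B}\xrightarrow{\cov{e}}\cov{C}$ is pointwise exact at $\cov{B}$; and by \Cref{lem:nexact}, saying $d^X_1$ admits an $n$‑kernel is the same as saying $F\coloneqq\cok\cov{d^X_1}$ lies in $\mod\cC$ and admits a length‑$(n+1)$ projective resolution by representables, in particular $\pdim_{\mod\cC}F\le n+1$. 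Dually, producing an $(n+1-m)$‑kernel of $d^X_m$ amounts to exhibiting a length‑$(n+2-m)$ projective resolution of $\cok\cov{d^X_m}$ by representables. So the lemma reduces to the assertion that $\cok\cov{d^X_m}\in\mod\cC$ with $\pdim_{\mod\cC}\cok\cov{d^X_m}\le n+2-m$ (the case $m=1$ being literally the hypothesis, so assume $m\ge 2$).

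To prove this reduction, I would apply $\cov{-}$ to \eqref{fig:start}. The weak‑kernel hypotheses make $\cov{X_m}\to\cov{X_{m-1}}\to\cdots\to\cov{X_0}$ pointwise exact at $\cov{X_{m-1}},\dots,\cov{X_1}$, and $\cov{X_0}\to F\to 0$ is exact by definition of $F$; hence $\cdots\to\cov{X_{m-1}}\to\cdots\to\cov{X_0}\to F\to 0$ is a truncation of a projective resolution of $F$, and the weak‑kernel equality $\im\cov{d^X_m}=\ker\cov{d^X_{m-1}}$ identifies $\cok\cov{d^X_m}\cong\im\cov{d^X_{m-1}}$, i.e.\ the $(m-1)$‑st syzygy of $F$ along this resolution. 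Iterating \Cref{lem:syzygy} on the pointwise short exact sequences $0\to\ker\cov{d^X_i}\to\cov{X_i}\to\im\cov{d^X_i}\to 0$ keeps all of these syzygies inside $\mod\cC$, and iterating \Cref{lem:syzygy2} gives $\pdim_{\mod\cC}\cok\cov{d^X_m}\le\pdim_{\mod\cC}F-(m-1)\le n+2-m$ (with $\infty-1=\infty$; if some syzygy turns out projective the bound is immediate since $m\le n$).

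It remains to run \Cref{lem:nexact} backwards and actually build the completion. Set $K\coloneqq\ker\cov{d^X_m}$. The sequences $0\to\im\cov{d^X_m}\to\cov{X_{m-1}}\to\cok\cov{d^X_m}\to 0$ and $0\to K\to\cov{X_m}\to\im\cov{d^X_m}\to 0$ together with \Cref{lem:syzygy,lem:syzygy2} give $K\in\mod\cC$ with $\pdim_{\mod\cC}K\le n-m$, so $K$ has a projective resolution of length $\le n-m$, whose terms are representable by \Cref{cor:exactcat}, say $\cov{Z_{n+1}}\to\cdots\to\cov{Z_{m+1}}\to K\to 0$. Composing the augmentation $\cov{Z_{m+1}}\twoheadrightarrow K$ with $K\hookrightarrow\cov{X_m}$ and using $\cC\cong\proj\cC$ yields a morphism $Z_{m+1}\to X_m$ in $\cC$, and splicing produces a complex $Z_{n+1}\to\cdots\to Z_{m+1}\to X_m\to X_{m-1}$ which, by the exactness of the resolution of $K$ together with the above identifications, maps under $\cov{-}$ to a projective resolution of $\cok\cov{d^X_m}$; so by \Cref{lem:nexact} it is a left $(n+1-m)$‑exact sequence, i.e.\ $(d^Z_{n+1},\dots,d^Z_{m+1})$ is an $(n+1-m)$‑kernel of $d^X_m$. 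Finally I would glue this $(n+1-m)$‑kernel onto the tail $X_m\to X_{m-1}\to\cdots\to X_0$ of \eqref{fig:start} and check that the glued $(n+2)$‑term complex maps under $\cov{-}$ to a projective resolution of $F$: pointwise exactness in degrees $0,\dots,m-2$ is inherited from the weak kernels in \eqref{fig:start}, at the seam degree $\cov{X_{m-1}}$ it follows from $d^X_m$ being a weak kernel of $d^X_{m-1}$, and at $\cov{X_m}$ from the fact that $\cov{Z_{m+1}}\to\cov{X_m}$ has image exactly $K=\ker\cov{d^X_m}$; a last invocation of \Cref{lem:nexact} then certifies that the glued complex is a \nker, the desired completion.

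The main obstacle I anticipate is purely organizational rather than conceptual: keeping the index shift between ``$(n+1-m)$‑kernel of $d^X_m$'' and ``$n$‑exact completion of \eqref{fig:start}'' straight, and verifying carefully that the spliced complex is pointwise exact in every degree it must be — the two seam degrees $\cov{X_{m-1}}$ and $\cov{X_m}$ being the only ones that are not immediately formal. Everything else is a routine transport of standard homological bookkeeping (syzygies, the horseshoe lemma, long exact $\Ext$‑sequences) across the equivalence $\cC\cong\proj\cC\subset\mod\cC$.
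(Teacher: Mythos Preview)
Your proposal is correct and follows essentially the same route as the paper: apply $\cov{-}$, use \Cref{lem:nexact} to translate the hypothesis into $\pdim_{\mod\cC}\cok\cov{d^X_1}\le n+1$, iterate \Cref{lem:syzygy,lem:syzygy2} along the weak-kernel chain to bound $\pdim_{\mod\cC}\ker\cov{d^X_m}\le n-m$, then resolve $\ker\cov{d^X_m}$ by representables and splice. The paper's version is terser and omits the seam-exactness check and the final gluing paragraph you spell out, but the argument is the same.
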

\begin{proof}
    Because $d^X_1$ has an $n$-kernel, $\cok\cov{d^X_1}$ is in $\mod \cC$ and has projective dimension at most $n+1$, by \Cref{lem:nexact}.
    By applying $\cov{-}$ to \eqref{fig:start} we obtain a sequence
    \[\begin{tikzcd}[column sep = 3.2em]
	{\cov{X_m}} &[-.5em] {\cov{X_{m-1}}} & \cdots &[-.5em] {\cov{X_1}} &[-.5em] {\cov{X_{0}}} & {\cok{\cov{d^X_1}}} \ar[r] &[-2em] 0
	\arrow["{\cov{d^X_m}}", from=1-1, to=1-2]
	\arrow["{\cov{d^X_{m-1}}}", from=1-2, to=1-3]
	\arrow["{\cov{d^X_{2}}}", from=1-3, to=1-4]
	\arrow["{\cov{d^X_{1}}}", from=1-4, to=1-5]
	\arrow["\aug", from=1-5, to=1-6]
    \end{tikzcd}\]
    which is pointwise exact. 
    Recursive application of \Cref{lem:syzygy,lem:syzygy2} shows that $\ker \cov{d^X_m}$ is in $\mod \cC$ and has projective dimension at most $n-m$. 
    Picking a projective resolution of $\ker \cov{d^X_m}$ and using \Cref{lem:nexact} and the Yoneda embedding being an equivalence shows that $d^X_{m}$ has an $(n+1-m)$-kernel.
\end{proof}

   In the following two lemmas notice that our axioms \ref{EA0}, \ref{EA1} and \ref{EA1op} are already the same as (E0), (E1) and (E1${}^{\op}$) in \cite[Definition 4.2]{Jas16}.
\begin{lemma}\label{lem:thesame}
   An $n$-exact category $(\cC,\sX)$ in the sense of \cite[Definition 4.2]{Jas16} is also an $n$-exact category in the sense of \Cref{def:nexact}. 
\end{lemma}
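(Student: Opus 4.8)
The plan is to verify, for the pair $(\cC,\sX)$, the standing requirement that $\sX$ be an isomorphism-closed subclass of $\KXnC$ together with the five axioms of \Cref{def:nexact}. Three of the axioms, \ref{EA0}, \ref{EA1} and \ref{EA1op}, coincide verbatim with Jasso's (E0), (E1) and (E1${}^{\op}$) (as recorded immediately before the statement), so only the closure property and the axioms \ref{EA2} and \ref{EA2op} require an argument.

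For the closure property I would reason as follows. Suppose $X_\bullet\in\sX$ and $Y_\bullet\in\nC$ is homotopy equivalent to $X_\bullet$. Then $Y_\bullet$ is again an $n$-exact sequence by \cite[Proposition 2.5]{Jas16}, and a homotopy equivalence $X_\bullet\to Y_\bullet$ stays a homotopy equivalence after applying any additive functor, so it is in particular a weak isomorphism of $n$-exact sequences in the sense of \cite{Jas16}. Since $\sX$ is closed under weak isomorphisms, $Y_\bullet\in\sX$. Thus $\sX$ is closed under homotopy equivalence in $\nC$, which is precisely the statement that it is an isomorphism-closed subclass of $\KXnC$.

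For \ref{EA2}, let $d^X_{n+1}\colon X_{n+1}\to X_n$ be an $\sX$-inflation and $f_{n+1}\colon X_{n+1}\to Y_{n+1}$ a morphism. I would choose an $\sX$-conflation $X_\bullet$ whose first morphism is $d^X_{n+1}$ and apply Jasso's (E2) to $X_\bullet$ and $f_{n+1}$. This yields a morphism of complexes $f_\bullet\colon X_\bullet\to Y_\bullet$ forming an $n$-pushout diagram in the sense of \cite{Jas16}, with $d^Y_{n+1}$ an $\sX$-inflation and whose mapping cone
\[
X_{n+1}\xrightarrow{\,\sm{-d^X_{n+1} & f_{n+1}}^\top\,} X_n\oplus Y_{n+1}\longrightarrow X_{n-1}\oplus Y_n\longrightarrow\cdots\longrightarrow X_1\oplus Y_2\longrightarrow Y_1
\]
is an $\sX$-conflation; in particular $\sm{-d^X_{n+1} & f_{n+1}}^\top$ is an $\sX$-inflation. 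It then remains to see that $\sm{f_n & d^Y_{n+1}}\colon X_n\oplus Y_{n+1}\to Y_n$ is a weak cokernel of $\sm{-d^X_{n+1} & f_{n+1}}^\top$; but this is exactly the assertion that the square \eqref{eq:PO} is a weak pushout, which is part of the content of $f_\bullet$ being an $n$-pushout diagram. Axiom \ref{EA2op} follows by the dual argument, either applying Jasso's (E2${}^{\op}$) directly or running the above in $\cC^{\op}$ for $\OCn(\sX)$ and transporting the conclusion back via \Cref{lem:op} and \Cref{rem:opclear}.

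The mathematics involved is light; the real work is bookkeeping, and that is where I expect the main obstacle to lie. Jasso's $n$-pushout diagrams are indexed in the order opposite to our $n$-exact sequences, so one must carefully match the data: identifying the mapping cone above as the $\sX$-conflation witnessing the inflation claim, and extracting from the $n$-pushout condition precisely the weak-cokernel morphism $\sm{f_n & d^Y_{n+1}}$ demanded by \ref{EA2} (equivalently, the weak-pushout property of the top square). Beyond establishing this dictionary, nothing remains to be computed.
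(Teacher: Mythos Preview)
Your overall strategy for \ref{EA2} (use Jasso's (E2) together with \cite[Proposition 4.8]{Jas16} to see that the mapping cone is an $\sX$-conflation) is the same as the paper's, but the step you flag as ``bookkeeping'' is a genuine gap. The $n$-pushout condition only says that the mapping cone is a \ncoker, so the map that is guaranteed to be a weak cokernel of $\sm{-d^X_{n+1}\\ f_{n+1}}$ is the mapping-cone differential $\sm{-d^X_n & 0\\ f_n & d^Y_{n+1}}\colon X_n\oplus Y_{n+1}\to X_{n-1}\oplus Y_n$, not $\sm{f_n & d^Y_{n+1}}\colon X_n\oplus Y_{n+1}\to Y_n$. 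These differ exactly by the row $\sm{-d^X_n & 0}$, and there is no reason, for a general $n$-pushout supplied by (E2), that this row factors through $\sm{f_n & d^Y_{n+1}}$; equivalently, the first square of an $n$-pushout diagram need not be a weak pushout. The paper deals with this by \emph{taking the mapping-cone differential itself} as the weak cokernel required in \ref{EA2}: it sets the ``new'' $Y_n$ to be $X_{n-1}\oplus Y_n$, so that the needed inflation becomes $\sm{0\\ d^Y_{n+1}}$, and then applies Jasso's axiom a \emph{second} time (to $d^Y_{n+1}$ and the zero map) to see that $\sm{0\\ d^Y_{n+1}}$ is an $\sX$-inflation. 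Your argument skips this second application, and without it \ref{EA2} is not established.

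There is also a problem with the closure argument. In Jasso's sense a weak isomorphism of $n$-exact sequences is a morphism with prescribed components being actual isomorphisms; by \cite[Proposition 2.7]{Jas16} every weak isomorphism is a homotopy equivalence, but the converse fails (already for $n=1$ the zero map between $0$ and the split sequence $0\to A\xrightarrow{\id}A$ is a homotopy equivalence with only one component an isomorphism). So ``a homotopy equivalence remains one under additive functors'' does not make it a weak isomorphism, and you cannot invoke closure under weak isomorphisms directly. The paper instead appeals to \cite[Lemma 4.22(4) and Proposition 4.23]{HLN21} to obtain closure of $\sX$ under homotopy equivalence from Jasso's axioms.
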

\begin{proof}
   It follows from \mbox{\cite[Lemma 4.22(4) and Proposition 4.23]{HLN21}} the class $\sX$ is closed under homotopy equivalences in $\nC$ and thus closed under isomorphism in $\KXnC$.

   We only show that \cite[Defintion 4.2(E2${}^{\op}$)]{Jas16} implies \ref{EA2op}, as the statement that \cite[Defintion 4.2(E2)]{Jas16} implies \ref{EA2} follows dually.
   Suppose we are given the solid morphisms of \eqref{eq:PB}, with $d^X_1$ being an $\sX$-deflation. 
   Then \cite[Definition 4.2(E2${}^{\op}$)]{Jas16} and the dual of \cite[Proposition 4.8]{Jas16} show that $\sm{-d^X_1 & f_{0}}$ is an $\sX$-deflation having a weak kernel of the form
   \begin{equation*} 
        \sm{-d^X_{2} & f_{1} \\ 0 & d^Y_1} \colon X_{2} \oplus Y_1 \to X_1 \oplus Y_{0},
   \end{equation*}
   where $d^Y_1$ is an $\sX$-deflation and $d^X_{2}$ is a weak kernel of $d^X_1$.
   By the same argument applied to the $\sX$-deflation $d^Y_1 \colon Y_{1} \to Y_{0}$ and the morphism $0 \colon X_{2} \to Y_{0}$ it follows that the morphism $\sm{0 & d^Y_1} \colon X_{2} \oplus Y_1 \to Y_{0}$ is also an $\sX$-deflation, cf.\s\ also \Cref{rem:isoclos}. Thus
   \begin{equation*}\begin{tikzcd}[ampersand replacement = \&, column sep = large]
      	{ X_{2} \oplus Y_{1}} \& {Y_{0}} \\
      	{X_{1}} \& {X_{0}} 
      	\arrow["{\sm{0 & d^Y_1}}", two heads, from=1-1, to=1-2]
      	\arrow["{\sm{-d^X_2 & f_1}}"', from=1-1, to=2-1]
      	\arrow["{d^X_1}", two heads, from=2-1, to=2-2]
      	\arrow["{f_{0}}"', from=1-2, to=2-2]
      \end{tikzcd}\end{equation*}
   gives a possible choice for the desired dotted arrows in \eqref{eq:PB}.
\end{proof}

\begin{lemma}\label{lem:thesame2}
   An $n$-exact category $(\cC,\sX)$ in the sense of \Cref{def:nexact} is also an $n$-exact category in the sense of \cite[Definition 4.2]{Jas16}.
\end{lemma}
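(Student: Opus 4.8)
The plan is to prove the converse direction of the equivalence: starting from $(\cC,\sX)$ satisfying the axioms of \Cref{def:nexact}, verify Jasso's axioms (E0), (E1), (E1$^{\op}$), (E2), (E2$^{\op}$) from \cite[Definition 4.2]{Jas16}. As noted in the excerpt, (E0), (E1) and (E1$^{\op}$) are literally identical to \ref{EA0}, \ref{EA1} and \ref{EA1op}, so nothing is to be done there. The remaining content is that \ref{EA2} implies (E2) and, dually, \ref{EA2op} implies (E2$^{\op}$); by \Cref{rem:opclear} and \Cref{lem:op} it suffices to treat one of these, say that \ref{EA2op} implies (E2$^{\op}$). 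There is however one extra subtlety hidden in the phrasing of the two definitions: Jasso's framework presupposes that $\sX$ is closed under \emph{weak isomorphisms} in $\nC$, whereas \Cref{def:nexact} only assumes closure under isomorphism in $\KXnC$, i.e.\ under homotopy equivalence. So the plan splits into two tasks: (i) show that an $\sX$ satisfying \Cref{def:nexact} is automatically closed under weak isomorphisms, and (ii) deduce (E2$^{\op}$) from \ref{EA2op}.

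\textbf{Task (i): closure under weak isomorphisms.} The key observation is that a weak isomorphism of $n$-exact sequences $f_\bullet \colon X_\bullet \to Y_\bullet$ induces, via $\PCn$ and the fact that $\cov{-}$ is an equivalence onto $\proj\cC$ (cf.\ \Cref{cor:exactcat,lem:resolve}), a comparison of the two projective resolutions $\cov{X_\bullet}$ and $\cov{Y_\bullet}$ of the $\cC$-modules $\PCn(X_\bullet)$ and $\PCn(Y_\bullet)$; the weak-isomorphism condition forces the induced map $\PCn(X_\bullet) \to \PCn(Y_\bullet)$ to be an isomorphism of $\cC$-modules, after which the uniqueness of projective resolutions up to homotopy (\Cref{lem:resex}) shows $f_\bullet$ is a homotopy equivalence. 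Since $\sX$ is closed under isomorphism in $\KXnC$ and $\PCn(\sX)$ is the relevant subcategory of $\exnC$, membership in $\sX$ only depends on the isomorphism class of $\PCn(X_\bullet)$, so $Y_\bullet \in \sX$ whenever $X_\bullet \in \sX$. (This is essentially the content of \cite[Lemma 4.22(4) and Proposition 4.23]{HLN21} used in the other direction, here run backwards through our functorial dictionary.) I expect this is the step the author actually spells out in detail, perhaps citing \Cref{lem:resex} and \Cref{rem:homology}.

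\textbf{Task (ii): deducing (E2$^{\op}$).} Jasso's (E2$^{\op}$) asks: given an $\sX$-deflation $d^X_1 \colon X_1 \to X_0$ and an arbitrary $f_0 \colon Y_0 \to X_0$, there is an $\sX$-conflation $Y_\bullet$ with last map $d^Y_1$ and a morphism of conflations $f_\bullet \colon Y_\bullet \to X_\bullet$ lifting $f_0$ such that the rightmost square is (weakly) bicartesian, plus the mapping-cone-type sequence built from $Y_\bullet$, $X_\bullet$ and $f_\bullet$ is again in $\sX$. Our \ref{EA2op} directly produces the $\sX$-deflation $\sm{-d^X_1 & f_0} \colon X_1 \oplus Y_0 \to X_0$ together with a weak kernel $\sm{f_1 & d^Y_1}^\top$ whose second component $d^Y_1$ is an $\sX$-deflation. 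Completing $d^Y_1$ to an $n$-exact sequence $Y_\bullet$ (using that $\sX$-deflations are last maps of $\sX$-conflations) and the $\sX$-deflation $\sm{-d^X_1 & f_0}$ to one as well, one reads off the required morphism $f_\bullet \colon Y_\bullet \to X_\bullet$ and the required ``mapping cone'' conflation from the weak-kernel data and from \ref{EA1op}; the bicartesianness of the rightmost square is exactly the weak-pullback assertion recorded in the remark following \Cref{def:nexact}, and a lift in the remaining degrees $2,\dots,n$ exists by the weak-kernel chain, which can be organised using \Cref{lem:mktom-1k}. The main obstacle I anticipate is bookkeeping: matching our normalisation of conflations and of the cone construction to Jasso's precise formulation of (E2$^{\op}$) (and the compatibility with \cite[Proposition 4.8]{Jas16}), together with checking that the cone sequence really lies in $\sX$ rather than merely being an $n$-exact sequence — this last point is where one genuinely needs \ref{EA1op} (closure of deflations under composition) rather than just the weak-pullback property. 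Everything else is diagram-chasing of the kind already carried out in \Cref{lem:thesame}.
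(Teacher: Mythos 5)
Your overall decomposition (first closure under weak isomorphisms, then deducing (E2${}^{\op}$) from \ref{EA2op}) matches the paper, and your Task (ii) is essentially the paper's argument: apply \ref{EA2op} to obtain the deflation $\sm{-d^X_1 & f_0}$ with weak kernel $\sm{f_1 & d^Y_1}^\top$, then extend degree by degree using \Cref{lem:mktom-1k} (the paper organises this as a recursive claim about the mapping-cone differentials $\sm{-d^X_{m+1} & f_m \\ 0 & d^Y_m}$). One inaccuracy there: Jasso's (E2${}^{\op}$) only asks for an $n$-pullback diagram in which $d^Y_1$ is an $\sX$-deflation; it does not ask the cone to lie in $\sX$, so your appeal to \ref{EA1op} at that point is off-target (though harmless).

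The genuine gap is in Task (i). You assert that a weak isomorphism $f_\bullet$ forces $\PCn(f_\bullet)$ to be an isomorphism, whence $f_\bullet$ is a homotopy equivalence. This is precisely what cannot be taken for granted. In the only case not already covered by \cite[Proposition 2.7]{Jas16} — namely $f_0$ and $f_{n+1}$ isomorphisms — the induced $\cC$-morphism $\cok \cov{d^Y_1} \to \cok \cov{d^X_1}$ is only a pointwise epimorphism; its kernel $K$ satisfies $\Hom_{\mod\cC}(K,\cov{Z})=0$ and $\Ext^i_{\mod\cC}(K,\cov{Z})=0$ for $i \leq n-1$ and has finite projective dimension, but nothing forces $K=0$, so $f_\bullet$ need not be an isomorphism in $\KXnC$. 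If your claim were true, every weak isomorphism of $n$-exact sequences would be a homotopy equivalence and the distinction between the two closure conditions in \Cref{thm:thesame} would evaporate without using any axiom; in fact the paper proves exactly this step by invoking \ref{EA2op}: it applies \ref{EA2op} to $Y_\bullet$ and $f_0^{-1}d^X_1$ to get an $\sX$-deflation $\sm{-d^Y_1 & f_0^{-1}d^X_1}$, completes it to an $\sX$-conflation, and compares that conflation with the direct sum of $X_\bullet$ and the contractible complex $0 \to \cdots \to Y_1 = Y_1 \to 0$ via a morphism which is invertible in degrees $0$ and $1$ — a configuration that \cite[Proposition 2.7]{Jas16} does recognise as a homotopy equivalence. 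So closure under weak isomorphisms is a consequence of the axioms, not of the functorial dictionary alone, and your argument as written does not establish it.
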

 \begin{proof} 
   We first show that $\sX$ is closed under weak isomorphisms of $n$-exact sequences.
   By \cite[Proposition 2.7]{Jas16} it is enough to show that if 
   \[\begin{tikzcd}
      Y_\bullet \ar[d, "f_\bullet"] & Y_{n+1} \ar[d, "f_{n+1}"] \ar[r, "d^Y_{n+1}"]  & Y_n \ar[r, "d^Y_n"] \ar[d, "f_n"] & \cdots \ar[r, "d^Y_{2}"] & Y_1 \ar[d, "f_1"] \ar[r, "d^Y_1"] & Y_{0} \ar[d, "f_{0}"]\\
      X_\bullet & X_{n+1} \ar[r, "d^X_{n+1}"] & X_n \ar[r, "d^X_n"] & \cdots \ar[r, "d^X_{2}"] & X_1 \ar[r, "d^X_1"] & X_{0}
   \end{tikzcd}\]
   is a morphism of $n$-exact sequences, with $f_0$ and $f_{n+1}$ isomorphisms, then $X_\bullet \in \sX$ if and only if $Y_\bullet \in \sX$.
   We may assume $Y_\bullet \in \sX$, the other case is dual. 
   By \ref{EA2op} applied to $Y_\bullet$ and the morphism $f^{-1}_{0} d^X_1 \colon X_1 \to Y_{0}$ we have that $\sm{-d^Y_1 & f^{-1}_{0} d^X_1}$ is an $\sX$-deflation.
   There is a commutative diagram with two vertical solid isomorphisms
   \[\begin{tikzcd}[ampersand replacement=\&, column sep = large, row sep = 3em]
      {X_{n+1}} \&[-1.3em] \cdots \&[-1.3em] {X_{3}} \&[-.4em] {Y_{1} \oplus X_{2}} \&[1.3em] {Y_1 \oplus X_1} \&[2.5em] {Y_{0}} \\
   	{Z_{n+1}} \& \cdots \& {Z_{3}} \& {Z_{2}} \& {Y_1 \oplus X_1} \& {Y_{0}},
   	\arrow[from=2-1, tail, to=2-2]
   	\arrow[from=2-2, to=2-3]
    \arrow["{\sm{-d^Y_1 & f^{-1}_{0}d^X_1}}", two heads, from=2-5, to=2-6]
   	\arrow[from=2-3, to=2-4]
   	\arrow[from=2-4, to=2-5]
   	\arrow[dotted, from=1-1, to=2-1]
   	\arrow[dotted, from=1-3, to=2-3]
   	\arrow[dotted, from=1-4, to=2-4]
   	\arrow["{d^X_{n+1}}", from=1-1, to=1-2]
   	\arrow["{d^X_{4}}", from=1-2, to=1-3]
   	\arrow["{\sm{\id_{Y_1}& 0 \\ 0 & d^X_{2}}}", from=1-4, to=1-5]
   	\arrow["{\sm{\id_{Y_1} & 0 \\ f_1 & \id_{X_1}}}"', from=1-5, to=2-5]
   	\arrow["{\sm{0 & d^X_1}}", from=1-5, to=1-6]
      \arrow["f^{-1}_{0}"', from=1-6, to=2-6]
   	\arrow["{\sm{0 \\ d^X_{3}}}", from=1-3, to=1-4]
   \end{tikzcd}\]
   where the upper row is the sum of the complexes $0 \to \cdots\to 0 \to Y_1 = Y_1 \to 0$ and $X_\bullet$, and the lower row is an $\sX$-conflation.
   The upper complex is homotopy equivalent to $X_\bullet$ and thus an $n$-exact sequence. %
   Therefore, our diagram can be completed with the dotted arrows into a morphism of $n$-exact sequences, which is a homotopy equivalence by \cite[Proposition 2.7]{Jas16}.
   It follows that $X_\bullet$ is homotopy equivalent to the lower row which is an $\sX$-conflation and hence $X_\bullet \in \sX$.

   We only show that \ref{EA2op} implies \cite[Defintion 4.2(E2${}^{\op}$)]{Jas16}. 
   That \ref{EA2} implies \cite[Defintion 4.2(E2)]{Jas16} follows dually.
   Suppose we are given a conflation $X_\bullet$ and a morphism $f_{0} \colon Y_{0} \to X_{0}$ as illustrated in the diagram
      \begin{equation}\label{eq:npullback}
         \begin{tikzcd}[column sep = {6em,between origins}]
                                      & Y_n \ar[r, "d^Y_n", dotted] \ar[d, dotted, "f_n"] & \cdots \ar[r, "d^Y_{2}", dotted] & Y_1 \ar[d, "f_1", dotted] \ar[r, "d^Y_1", two heads, dotted] & Y_{0} \ar[d, "\mathrlap{f_{0}}"] \\
            (X_{n+1} \ar[r, tail, "d^X_{n+1}"] & )X_n \ar[r, "d^X_n"] & \cdots \ar[r, "d^X_{2}"] & X_1 \ar[r, two heads, "d^X_1"] & X_{0}.
         \end{tikzcd}
      \end{equation}
    According to \ref{EA2op} the morphism $\sm{-d^X_1 & f_{0}}$ has an $n$-kernel and we may construct an $\sX$-deflation $d^Y_1$ and a morphism $f_1$ such that $\sm{f_1 & d^Y_1}^\top$ is a weak kernel of $\sm{-d^X_1 & f_{0}}$.
    After putting $Y_{-1} \coloneqq 0$ and $d^{Y}_{0}$ to be the zero morphism $Y_{0} \to Y_{-1}$, the axiom (E2${}^{\op}$) from \cite[Defintion 4.2]{Jas16} follows by using the claim below recursively for $m=0,\dots,n-2$.

    \begin{clm}
    Suppose for some integer $0 \leq m \leq n-2$ we are given the solid morphisms of a commutative diagram
    \[\begin{tikzcd}
    	{Y_{m+2}} & {Y_{m+1}} & {Y_{m}} & {Y_{m-1}}\\
    	{X_{m+2}} & {X_{m+1}} & {X_{m}} & 
    	\arrow["{d^Y_{m+2}}", dotted, from=1-1, to=1-2]
    	\arrow["{f_{m+2}}"', dotted, from=1-1, to=2-1]
    	\arrow["{d^Y_{m+1}}", from=1-2, to=1-3]
    	\arrow["{d^Y_{m}}", from=1-3, to=1-4]
    	\arrow["{f_{m+1}}"', from=1-2, to=2-2]
    	\arrow["{f_{m}}", from=1-3, to=2-3]
    	\arrow["{d^X_{m+2}}", from=2-1, to=2-2]
    	\arrow["{d^X_{m+1}}", from=2-2, to=2-3]
    \end{tikzcd}\]
    and the two morphisms 
    \[\text{$d^C_{m+2} \coloneqq \sm{-d^X_{m+2} & f_{m+1} \\ 0 & d^Y_{m+1}}$ \qquad and \qquad $d^C_{m+1} \coloneqq \sm{-d^X_{m+1} & f_{m} \\ 0 & d^Y_{m}}$}\]
    such that $d^C_{m+1}$ has an $(n-m)$-kernel.
    If $\sm{f_{m+1} & d^Y_{m+1}}^\top$ is a weak kernel of $d^C_{m+1}$ then 
    \begin{enumerate}[label={{(\alph*)}}]
        \item $d^C_{m+2}$ is a weak kernel of $d^C_{m+1}$, and \label{item:wk} 
        \item $d^C_{m+2}$ has an $(n-m-1)$-kernel.\label{item:m-1k}
    \end{enumerate}
    In particular, there exists a weak kernel $\sm{f_{m+2} & d^Y_{m+2}} \colon Y_{m+2} \to X_{m+2} \oplus Y_{m+1}$ of $d^C_{m+2}$, which can be chosen to be a kernel if $m=n-2$.
\end{clm}

\begin{clmproof}
    A calculation shows that $d^C_{m+1} d^C_{m+2} = 0$ using that $\sm{f_{m+1} & d^Y_{m+1}}^\top$ is a weak kernel of $d^C_{m+1}$ and $d^X_{m+1} d^X_{m+2} = 0$.
    Furthermore, as $\smash{\sm{f_{m+1} & d^Y_{m+1}}^\top}$ is a direct summand of $d^C_{m+2}$ we conclude \ref{item:wk}. 
    \Cref{item:m-1k} is a direct consequence of \Cref{lem:mktom-1k}.
\end{clmproof}
\end{proof}

Combining \Cref{lem:thesame,lem:thesame2} we conclude the main theorem of this section.
\begin{theorem}\label{thm:thesame}
   For every idempotent complete category $\cC$ the notions of $n$-exact structures on $\cC$ given in \cite[Definition 4.2]{Jas16} and \Cref{def:nexact} coincide. 
\end{theorem}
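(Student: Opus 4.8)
The plan is to read off \Cref{thm:thesame} directly from \Cref{lem:thesame} and \Cref{lem:thesame2}: the first says that every class of admissible $n$-exact sequences in Jasso's sense satisfies the axioms of \Cref{def:nexact}, the second says the converse, and together they give the claimed coincidence of the two notions. The point worth isolating before starting is \emph{where} the two definitions can differ at all. Comparing \Cref{def:nexact} with \cite[Definition 4.2]{Jas16}, the axioms \ref{EA0}, \ref{EA1} and \ref{EA1op} are verbatim (E0), (E1) and (E1${}^{\op}$); hence the only genuine discrepancies are (i) the closure hypothesis on $\sX$ --- closure under homotopy equivalence in $\nC$, equivalently under isomorphism in $\KXnC$, versus Jasso's closure under weak isomorphisms of $n$-exact sequences --- and (ii) the precise shape of the weak-pushout and weak-pullback axioms \ref{EA2} and \ref{EA2op}, to be compared with (E2) and (E2${}^{\op}$). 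Both lemmas therefore amount to reconciling (i) and (ii).

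For the direction ``Jasso $\Rightarrow$ \Cref{def:nexact}'' I would first invoke \cite[Lemma 4.22(4) and Proposition 4.23]{HLN21} to see that a class closed under weak isomorphisms is already closed under homotopy equivalence in $\nC$, so (i) is automatic; the residual content is to extract \ref{EA2op} from (E2${}^{\op}$), which one does by spelling out the explicit form of the weak kernel furnished by the dual of \cite[Proposition 4.8]{Jas16} and then feeding the resulting deflation back into the axiom a second time. The statement \ref{EA2} follows dually. This is exactly \Cref{lem:thesame}.

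For the converse, ``\Cref{def:nexact} $\Rightarrow$ Jasso'' (\Cref{lem:thesame2}), the work genuinely splits in two. First one must upgrade closure under isomorphism in $\KXnC$ to closure under weak isomorphisms; by \cite[Proposition 2.7]{Jas16} it suffices to treat a morphism of $n$-exact sequences whose outer terms $f_0$ and $f_{n+1}$ are isomorphisms, which one handles by a direct-sum trick (adding a contractible complex $0 \to \cdots \to 0 \to Y_1 = Y_1 \to 0$) together with \ref{EA2op} and the homotopy-invariance of $n$-exactness. Second --- and this is where I expect the real obstacle to sit --- one must deduce (E2${}^{\op}$) from \ref{EA2op}, the subtlety being that \ref{EA2op} only produces the \emph{first} weak kernel of the relevant mapping cone, whereas (E2${}^{\op}$) demands an entire $n$-exact sequence. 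The remedy is the internal Claim: given a chain of weak kernels of length $m$, \Cref{lem:mktom-1k} --- itself powered by the syzygy results \Cref{lem:syzygy} and \Cref{lem:syzygy2}, translated through the Yoneda embedding --- upgrades the terminal map to an honest $(n+1-m)$-kernel; applying this recursively for $m = 0, \dots, n-2$ builds the whole conflation, and for $m = n-2$ one even obtains a genuine kernel in the last spot. The dual argument then yields \ref{EA2} $\Rightarrow$ (E2).

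Putting \Cref{lem:thesame} and \Cref{lem:thesame2} together gives the bi-implication of \Cref{thm:thesame}. The hypothesis that $\cC$ is idempotent complete enters throughout via the identification $\cC \cong \proj \cC$ of \Cref{cor:exactcat}, which underpins the reading of $n$-exact sequences as projective resolutions in \Cref{lem:nexact} and hence the homological arguments in \Cref{lem:mktom-1k}.
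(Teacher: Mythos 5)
Your proposal is correct and takes exactly the paper's route: \Cref{thm:thesame} is obtained by combining \Cref{lem:thesame} and \Cref{lem:thesame2}, and your account of the content of those two lemmas (the closure comparison via \cite[Lemma 4.22(4) and Proposition 4.23]{HLN21} and \cite[Proposition 2.7]{Jas16}, the extraction of \ref{EA2op} from (E2${}^{\op}$) via the dual of \cite[Proposition 4.8]{Jas16}, and the recursive construction of the full conflation from a single weak kernel via \Cref{lem:mktom-1k}) matches the paper's proofs. No gaps.
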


\subsection{The axioms of \texorpdfstring{$n$}{n}-exact categories in terms of functor categories}\label{sec:funapproach}
In this section we give an equivalent description of $n$-exact structures on $\cC$ in terms of certain subcategories of $\exn \cC$, that is we prove a higher analogue of \cite[Theorem B]{Eno18}.

   First we characterize all possible isomorphism closed classes $\sX \subset \KXnC$ of $n$-exact sequences which satisfy the axiom \ref{EA2op}.

   \begin{lemma}\label{lem:pb}
      Suppose $\sX \subset \KXnC$ is an isomorphism closed class of $n$-exact sequences and $\sF \coloneqq \PCn(\sX)$.
      Let $X_\bullet \in \sX$ be an $n$-exact sequence and $F \coloneqq \PCn(X_\bullet)$.
      For any $f_{0} \colon Y_{0} \to X_{0}$ the following statements are equivalent.
      \begin{enumerate}[label={{(\alph*)}}]
         \item The morphism $\sm{-d^X_1 & f_{0}}$ is an $\sX$-deflation admitting a weak kernel $\sm{f_1 & d^Y_1}^\top$ such that $d^Y_1$ is an $\sX$-deflation.\label{item:pbinA}
         \item The $\cC$-modules $\im(\aug \cov{f_{0}})$ and $F/\im(\aug \cov{f_{0}})$ are in $\sF$.\label{item:pbinmodA}
      \end{enumerate}
      Moreover, if these conditions are met then any weak kernel $\sm{f_1 & d^Y_1}^\top$ of $\sm{-d^X_1 & f_{0}}$ satisfies the additional condition that $d^Y_1$ is an $\sX$-deflation. 
   \end{lemma}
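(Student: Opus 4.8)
The plan is to push everything through \Cref{lem:dia}, applied to the pair of $\cC$-morphisms $\beta = \cov{d^X_1}$ and $\beta' = \cov{f_{0}}$, together with the characterisation recorded after \Cref{def:confinfdef}: since $\sX$ is isomorphism closed, a morphism is an $\sX$-deflation exactly when the cokernel of its image under $\cov{-}$ lies in $\PCn(\sX) = \sF$. Note that $\aug = \cokm{\cov{d^X_1}} \colon \cov{X_{0}} \to \cok\cov{d^X_1} = F$, so in the notation of \Cref{lem:dia} the morphism $\aug$ plays the role of $\cokm\beta$ and $F$ the role of $\cok\beta$, while $\cokm\beta\beta' = \aug\cov{f_{0}}$.

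I would first record two intermediate facts. \textbf{(1)} Since $\cov{-}$ is additive, the image of $\sm{-d^X_1 & f_{0}}$ under $\cov{-}$ is $\sm{-\cov{d^X_1} & \cov{f_{0}}}$, and the first isomorphism of \Cref{lem:dia} gives $\cok\sm{-\cov{d^X_1} & \cov{f_{0}}} \cong F/\im(\aug\cov{f_{0}})$ naturally; hence $\sm{-d^X_1 & f_{0}}$ is an $\sX$-deflation if and only if $F/\im(\aug\cov{f_{0}}) \in \sF$. \textbf{(2)} Suppose $\sm{-d^X_1 & f_{0}}$ admits a weak kernel $\sm{f_1 & d^Y_1}^{\top} \colon Y_1 \to X_1 \oplus Y_{0}$ in $\cC$. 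Applying $\cov{-}$ and unravelling the universal property of a weak kernel objectwise — for each $Z \in \cC$ the morphisms $Z \to X_1 \oplus Y_{0}$ killed by $\sm{-d^X_1 & f_{0}}$ are precisely those factoring through $\sm{f_1 & d^Y_1}^{\top}$ — shows that $\cov{Y_1} \to \cov{X_1}\oplus\cov{Y_{0}} \to \cov{X_{0}}$ is pointwise exact at its middle term. So the second isomorphism of \Cref{lem:dia}, taken with $\alpha = \cov{f_1}$ and $\alpha' = \cov{d^Y_1}$, gives $\im(\aug\cov{f_{0}}) \cong \cok\cov{d^Y_1}$ naturally; hence $d^Y_1$ is an $\sX$-deflation if and only if $\im(\aug\cov{f_{0}}) \in \sF$.

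Granting these, the argument is short. For \ref{item:pbinA} $\Rightarrow$ \ref{item:pbinmodA}, fact (1) yields $F/\im(\aug\cov{f_{0}}) \in \sF$ and fact (2), applied to the weak kernel provided by \ref{item:pbinA}, yields $\im(\aug\cov{f_{0}}) \in \sF$. For \ref{item:pbinmodA} $\Rightarrow$ \ref{item:pbinA}, fact (1) makes $\sm{-d^X_1 & f_{0}}$ an $\sX$-deflation; as the last morphism of an $\sX$-conflation it admits a weak kernel (e.g.\ the penultimate differential of that conflation), which we write as $\sm{f_1 & d^Y_1}^{\top}$, and fact (2) then shows $d^Y_1$ is an $\sX$-deflation. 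The closing ``Moreover'' is exactly the ``if'' direction of fact (2): once \ref{item:pbinmodA} holds we have $\im(\aug\cov{f_{0}}) \in \sF$, and fact (2) then forces $d^Y_1$ to be an $\sX$-deflation for \emph{any} weak kernel $\sm{f_1 & d^Y_1}^{\top}$ of $\sm{-d^X_1 & f_{0}}$.

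The step I expect to need the most care is fact (2): matching the objectwise universal property of a weak kernel in $\cC$ with pointwise exactness at the middle term of the resulting sequence of representables, and checking that the combinatorics of \Cref{lem:dia} line up — in particular that it is $\cok\alpha' = \cok\cov{d^Y_1}$ rather than $\cok\alpha$ that reappears, and that $\aug$ is indeed $\cokm\beta$ for $\beta = \cov{d^X_1}$. Everything else is bookkeeping with the equivalence $\cC \cong \proj\cC$ and the deflation-versus-$\sF$ dictionary.
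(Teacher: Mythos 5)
Your proposal is correct and follows essentially the same route as the paper: both directions are reduced to the two isomorphisms of \Cref{lem:dia} applied with $\beta = \cov{d^X_1}$ and $\beta' = \cov{f_{0}}$, combined with the dictionary that a morphism is an $\sX$-deflation precisely when the cokernel of its Yoneda image lies in $\sF$. Your ``facts (1) and (2)'' are exactly the isomorphisms $\cok\covsm{-d^X_1 & f_{0}} \cong F/\im(\aug\cov{f_{0}})$ and $\cok\cov{d^Y_1} \cong \im(\aug\cov{f_{0}})$ used in the paper's proof, and your handling of the final ``Moreover'' clause matches as well.
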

   \begin{proof}
   If there are maps such that \ref{item:pbinA} is satisfied then $\smash{F/\im( \aug \cov{f_{0}}) \cong \cok \covsm{-d^X_1 & f_{0}}}$ and $\smash{\im (\aug \cov{f_{0}}) \cong \cok \cov{d^Y_1}}$ are in $\sF$ using \Cref{lem:dia}. Hence, \ref{item:pbinmodA} holds.

   Conversely, assume \ref{item:pbinmodA} holds.
   Then $\smash{\cok \covsm{-d^X_1 & f_{0}} \cong F/\im( \aug \cov{f_{0}})}$ is in $\sF$ by \Cref{lem:dia}, showing that $\sm{-d^X_1 & f_{0}}$ is an $\sX$-deflation. 
   In particular, $\sm{-d^X_1 & f_{0}}$ has a weak kernel $\sm{f_1 & d^Y_1}^\top$.
   Then $\smash{\cok \cov{d^Y_1} \cong \im( \aug \cov{f_{0}})}$ is in $\sF$ by \Cref{lem:dia}, that is $d^Y_1$ is an $\sX$-deflation. 
   This shows that \ref{item:pbinA} and the last part of the lemma are satisfied.
   \end{proof}

    \begin{definition}\label{def:exnc}
        For a subcategory $\sF \subset \ex_n \cC$ which is closed under isomorphism let $\Pb^{\cC}_n(\sF) \subset \ex_n \cC$ be the full subcategory of all objects $F \in \sF$ such that for any $G \in \mod \cC$ and any $\alpha \colon G \to F$ both $\im \alpha$ and $F/\im \alpha$ are in $\sF$.
    \end{definition}

    Notice also that we run over all $G \in \mod \cC$ in \Cref{def:exnc} is not essential. 

    \begin{remark}\label{rem:projenough}
        As $\im \alpha = \im \alpha \beta$ and $F/\im \alpha = F/\im \alpha\beta$ for any pointwise epimorphism $\beta \colon \cov{X} \to G$ we can replace the conditions that the functors $\im \alpha$ and $F/\im \alpha$ are in $\sF$ for all $G \in \mod \cC$ in \Cref{def:exnc} equivalently by the same conditions for all $G$ of the form $\cov{X}$ for $X \in \cC$ or by the same conditions for all finitely generated $\cC$-modules $G$, that is all $\cC$-modules $G$ which admit a pointwise epimorphism $\beta \colon \cov{X} \to G$ for some $X \in \cC$.
    \end{remark}

    The operation $\Pb^\cC_n$ picks out those functors which correspond to deflations which are, in the sense of the axiom \ref{EA2op}, stable under weak pullbacks within a given class of conflations.
    In particular, using \Cref{lem:projlift} we have the following.

    \begin{corollary}\label{lem:pbcl}
        Let $\sX \subset \KXnC$ be a subclass which is closed under isomorphism and $\sF \coloneqq \PCn(\sX)$.
        Then $\sX$ satisfies the axiom \ref{EA2op} if and only if $\sF = \Pb^{\cC}_n(\sF)$.
    \end{corollary}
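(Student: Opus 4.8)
The plan is to read this off from \Cref{lem:pb}, which already carries the essential content: all that remains is to quantify correctly over the relevant test morphisms, using \Cref{rem:projenough} to reduce to representable objects and \Cref{lem:projlift} to rewrite an arbitrary map $\cov{X} \to F$ in the form $\aug \cov{f_{0}}$. Suppose first that $\sX$ satisfies \ref{EA2op}. Since $\Pb^\cC_n(\sF)$ is by \Cref{def:exnc} a full, isomorphism-closed subcategory of $\sF$, it suffices to prove $\sF \subset \Pb^\cC_n(\sF)$. Fix $F \in \sF$; as $\sF = \PCn(\sX)$ and $\sX$ is isomorphism closed, we may assume $F = \PCn(X_\bullet)$ with $X_\bullet \in \sX$, so $\aug \colon \cov{X_{0}} \to F$ is a pointwise epimorphism. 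By \Cref{rem:projenough} it is enough to show, for every $X \in \cC$ and every $\alpha \colon \cov{X} \to F$, that $\im \alpha$ and $F/\im \alpha$ lie in $\sF$. Using \Cref{lem:projlift}, $\alpha$ factors through $\aug$, and identifying the lift with $\cov{f_{0}}$ for a unique $f_{0} \colon X \to X_{0}$ via $\cC \cong \proj \cC$, we get $\alpha = \aug \cov{f_{0}}$. Applying \ref{EA2op} to $X_\bullet$ and $f_{0}$ gives statement \ref{item:pbinA} of \Cref{lem:pb}, hence \ref{item:pbinmodA}: $\im(\aug \cov{f_{0}}) = \im \alpha$ and $F/\im(\aug \cov{f_{0}}) = F/\im \alpha$ lie in $\sF$. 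Thus $F \in \Pb^\cC_n(\sF)$.

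Conversely, suppose $\sF = \Pb^\cC_n(\sF)$. Let $d^X_1$ be an $\sX$-deflation and $f_{0} \colon Y_{0} \to X_{0}$ an arbitrary morphism. By \Cref{def:confinfdef} choose an $\sX$-conflation $X_\bullet$ whose last morphism is $d^X_1$, so that $F \coloneqq \PCn(X_\bullet) = \cok \cov{d^X_1}$ lies in $\sF = \Pb^\cC_n(\sF)$. Applying \Cref{def:exnc} with $G = \cov{Y_{0}} \in \mod \cC$ and $\alpha = \aug \cov{f_{0}} \colon \cov{Y_{0}} \to F$, we obtain that $\im(\aug \cov{f_{0}})$ and $F/\im(\aug \cov{f_{0}})$ are in $\sF$, which is statement \ref{item:pbinmodA} of \Cref{lem:pb}; hence \ref{item:pbinA} holds, i.e.\ $\sm{-d^X_1 & f_{0}}$ is an $\sX$-deflation admitting a weak kernel $\sm{f_1 & d^Y_1}^\top$ such that $d^Y_1$ is an $\sX$-deflation, and the final clause of \Cref{lem:pb} moreover records that every weak kernel of $\sm{-d^X_1 & f_{0}}$ has this property. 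Since $d^X_1$ and $f_{0}$ were arbitrary, this is precisely axiom \ref{EA2op}.

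I expect no genuine obstacle here; the only points requiring care are the reduction via \Cref{rem:projenough} to representable test objects, the passage from a general $\alpha \colon \cov{X} \to F$ to the normalised form $\aug \cov{f_{0}}$ using projectivity of $\cov{X}$ together with $\cC \cong \proj \cC$, and, in the converse direction, the observation that an $\sX$-deflation always extends to an $\sX$-conflation whose associated functor is $\cok \cov{d^X_1}$, so that \Cref{lem:pb} applies verbatim. Everything else is bookkeeping.
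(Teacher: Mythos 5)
Your proof is correct and follows essentially the same route as the paper: the converse direction is read off directly from \Cref{lem:pb}, and the forward direction reduces to representable test objects via \Cref{rem:projenough}, writes $\alpha = \aug\cov{f_0}$ using \Cref{lem:projlift}, and again invokes \Cref{lem:pb}. (Only a cosmetic point: the lift of $\alpha$ through $\aug$ need not be unique, though the morphism $f_0$ corresponding to a chosen lift is, by Yoneda; this does not affect the argument.)
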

    \begin{proof}
        If $\sF = \Pb^{\cC}_n(\sF)$ then it follows immediately from \Cref{lem:pb} that $\sX$ satisfies \ref{EA2op}.
        If $\sX$ satisfies \ref{EA2op} then it suffices to show that $\im \alpha$ and $F/\im \alpha$ are in $\sF$ for any morphism $\alpha \colon \cov{Y_0} \to F$ with $F \in \sF$ and $Y_0 \in \cC$, by \Cref{rem:projenough}.
        If we pick $X_\bullet \in \sX$ with $F = \PCn(X_\bullet)$ then we can find a morphism $f_0 \colon Y_0 \to X_0$ with $\alpha = \cokm{\cov{d^X_1}} \cov{f_0}$, by \Cref{lem:projlift}.
        It follows then from \Cref{lem:pb} that $\im \alpha$ and $F/\im \alpha$ are in $\sF$.
    \end{proof}

    \begin{remark}
        If $\sX$ satisfies \ref{EA2op} the any weak pullback of a $\sX$-deflation is again an $\sX$-deflation by \Cref{lem:pb} and \Cref{lem:pbcl}.
        Dually, weak pushouts of $\sX$-inflations are again $\sX$-inflations if $\sX$ satisfies \ref{EA2}.
    \end{remark}

    The sequence \eqref{eq:sesextseq} in the proof of the following lemma shows that composition of deflations can, up to a small error which is hidden in \ref{EA2op}, be interpreted as taking extensions in $\exnC$.

   \begin{lemma}\label{lem:exclos}
      Suppose $\sX \subset \KXnC$ is a subclass of $n$-exact sequences which is closed under isomorphism in $\KXnC$ and satisfies \ref{EA2op}.
      Then $\sX$ satisfies \ref{EA1op} if and only if $\sF \coloneqq \PCn(\sX)$ is closed under extensions in $\exn \cC$.
   \end{lemma}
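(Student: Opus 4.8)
The plan is to translate the composition of $\sX$-deflations into the formation of an extension in $\exnC$, using the third-isomorphism-type sequence from \Cref{lem:rsnake} applied to a composite of two deflations. First I would fix two composable $\sX$-deflations $d^X_1 \colon X_1 \to X_0$ and $d^Z_1 \colon Z_1 \to Z_0$ with $X_0 = Z_1$, set $F \coloneqq \PCn(X_\bullet) = \cok\cov{d^X_1}$ and $G \coloneqq \PCn(Z_\bullet) = \cok\cov{d^Z_1}$, and consider the composite $g \coloneqq d^Z_1 d^X_1 \colon X_1 \to Z_0$. By \Cref{lem:nexact}\ref{item:projres} applied to the appropriate $n$-kernel, $\cok\cov{g}$ is exactly the functor attached to the composed conflation (once we know such a conflation exists), so the statement reduces to showing $\cok\cov{g} \in \sF$ if and only if $\sF$ is extension closed. \Cref{lem:rsnake} with $\alpha = \cov{d^X_1}$, $\beta = \cov{d^Z_1}$ yields a pointwise exact sequence
\begin{equation}\label{eq:sesextseq}
   0 \to \im\cokm{\cov{g}}\cov{d^Z_1} \to \cok\cov{g} \to \cok\cov{d^Z_1} \to 0,
\end{equation}
that is $0 \to \im\cokm{\cov{g}}\cov{d^Z_1} \to \cok\cov{g} \to G \to 0$; moreover $\cok\cov{g}/\im\cokm{\cov{d^X_1}}\kerm{\cov{d^Z_1}} \cong \im\cokm{\cov{g}}\cov{d^Z_1}$.

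For the forward direction (\ref{EA1op} $\Rightarrow$ $\sF$ extension closed), I would take an extension $0 \to G' \to E \to G \to 0$ in $\exnC$ with $G', G \in \sF$ and must show $E \in \sF$. Realise $G$ as $\PCn$ of some $Z_\bullet \in \sX$, pull the extension back along the projective cover $\cov{Z_0} \to G$ to get a morphism $\alpha \colon \cov{Z_0} \to G$ factoring through $E$; using \Cref{lem:projlift} this is induced by a morphism $Z_0 \to $ (something realising $G'$ as a quotient). The point is that $E$ appears as $\cok\cov{g}$ in a sequence of the form \eqref{eq:sesextseq} for a suitable composite $g = d^Z_1 d^X_1$, where $d^X_1$ is a deflation realising $G'$ and the "small error" term $\im\cokm{\cov{d^X_1}}\kerm{\cov{d^Z_1}}$ is controlled. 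Here is where \ref{EA2op} enters: after possibly replacing $d^X_1$ by a weak pullback (legitimate since \ref{EA2op} holds, cf.\ \Cref{lem:pb,lem:pbcl}), we may arrange that the composite $d^Z_1 d^X_1$ is again an $\sX$-deflation by \ref{EA1op}, hence $E \cong \cok\cov{d^Z_1 d^X_1} \in \PCn(\sX) = \sF$.

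For the converse ($\sF$ extension closed $\Rightarrow$ \ref{EA1op}), given composable $\sX$-deflations $d^X_1, d^Z_1$, both $F = \cok\cov{d^X_1}$ and $G = \cok\cov{d^Z_1}$ lie in $\sF$; I need $\cok\cov{d^Z_1 d^X_1} \in \sF$. The sequence \eqref{eq:sesextseq} exhibits $\cok\cov{d^Z_1 d^X_1}$ as an extension of $G$ by $\im\cokm{\cov{d^Z_1 d^X_1}}\cov{d^Z_1}$, so it suffices to show the latter is in $\sF$. But $\im\cokm{\cov{d^Z_1 d^X_1}}\cov{d^Z_1} \cong F/\im\cokm{\cov{d^X_1}}\kerm{\cov{d^Z_1}}$ is a quotient $F/\im\alpha$ for $\alpha = \cokm{\cov{d^X_1}}\kerm{\cov{d^Z_1}} \colon \ker\cov{d^Z_1} \to F$; since \ref{EA2op} holds we have $F = \Pb^\cC_n(F)$ by \Cref{lem:pbcl}, and by definition of $\Pb^\cC_n$ (\Cref{def:exnc}) this quotient lies in $\sF$. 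Then extension closure of $\sF$ gives $\cok\cov{d^Z_1 d^X_1} \in \sF$, and by the remark after \Cref{def:confinfdef} this says $d^Z_1 d^X_1$ is an $\sX$-deflation.

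The main obstacle I expect is the bookkeeping in the forward direction: matching an abstract extension $0 \to G' \to E \to G \to 0$ in $\exnC$ with a concrete sequence of the form \eqref{eq:sesextseq} coming from a genuine composition of deflations, and in particular arranging the "small error" term to vanish or be absorbed — this is precisely the step where one must invoke \ref{EA2op} (via weak pullback stability) to replace the naive lift by one for which the composite $d^Z_1 d^X_1$ is again an $\sX$-deflation. The converse direction is comparatively formal once \eqref{eq:sesextseq} and \Cref{lem:pbcl} are in hand.
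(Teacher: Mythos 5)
Your converse direction ($\sF$ extension closed $\Rightarrow$ \ref{EA1op}) is correct and is essentially the paper's argument: apply \Cref{lem:rsnake} to the composite, identify the left-hand term of the resulting sequence as $F/\im\alpha$ for $\alpha = \cokm{\cov{d^X_1}}\kerm{\cov{d^Z_1}}$, and use \Cref{lem:pbcl} together with extension closure. (Two small points you should make explicit: $\ker\cov{d^Z_1} \in \mod\cC$, which follows from \Cref{lem:syzygy} and is needed before \Cref{def:exnc} applies; and the conclusion that a morphism $g$ with $\cok\cov{g} \in \sF$ is an $\sX$-deflation, which uses that $\sX$ is closed under isomorphism in $\KXnC$.)

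The forward direction, however, has a genuine gap, and you have correctly located it yourself: you never actually exhibit the given extension $0 \to G' \to E \to G \to 0$ as $\cok\cov{d^Z_1 d^X_1}$ for a composite of two $\sX$-deflations. The obstruction is exactly the error term in \eqref{eq:sesextseq}: for an arbitrary choice of presentations the left-hand term is only a quotient of $G'$, not $G'$ itself. Your proposed fix --- replacing $d^X_1$ by a weak pullback using \ref{EA2op} --- is not justified and is not what is needed; in fact \ref{EA2op} plays no role in this direction. The correct mechanism is a horseshoe-type compatible choice of projective presentations: lift a presentation $\cov{X_0} \twoheadrightarrow H$ through $E \twoheadrightarrow H$ to get $\varepsilon_E \colon \cov{X_0} \to E$ and an induced $\cov{f_0} \colon \cov{Y_0} \to \cov{X_0}$ over $G' \to E$, then enlarge $\cov{Y_0}$ and $\cov{X_0}$ by a common summand $\cov{Z}$ covering $G'$ so that both vertical maps become pointwise epimorphisms. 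A pointwise snake-lemma argument then shows $\ker\varepsilon_{G'} \to \ker\varepsilon_E$ is a pointwise epimorphism, whence choosing $d^{Y}_1$ with $\im\cov{d^Y_1} = \ker\varepsilon_{G'}$ gives $E \cong \cok\cov{f_0 d^Y_1}$ \emph{exactly}, with no error term. Since $\cok\cov{d^Y_1} \cong G'$ and $\cok\cov{f_0} \cong H$ lie in $\sF$, both $d^Y_1$ and $f_0$ are $\sX$-deflations, so $f_0 d^Y_1$ is one by \ref{EA1op} and $E \in \sF$. Without this construction the forward implication is not proved.
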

   \begin{proof}
      Suppose \ref{EA1op} is satisfied. 
      Let $0 \to F \xrightarrow{\alpha} G \xrightarrow{\beta} H \to 0$ be an pointwise exact sequence in $\mod \cC$ with $F,H \in \sF$.
      Picking a projective resolution of $H$ we obtain the solid morphisms of a diagram 
      \begin{equation}\begin{tikzcd}[ampersand replacement=\&]\label{eq:covers}
        \& {\cov{Y_{0}}} \&[.7em] {\cov{X_{0}}} \& {H} \& 0\\
      	0 \& {F} \& G \& {H} \& 0
      	\arrow[Rightarrow, no head, from=1-4, to=2-4]
      	\arrow["{\cov{f_{0}}}", from=1-2, to=1-3]
      	\arrow["\alpha", from=2-2, to=2-3]
      	\arrow["\varepsilon_G", dotted, from=1-3, to=2-3]
      	\arrow["\beta", from=2-3, to=2-4]
      	\arrow[from=2-1, to=2-2]
      	\arrow[from=2-4, to=2-5]
      	\arrow[from=1-3, to=1-4]
         \arrow[from=1-4, to=1-5]
      	\arrow["\varepsilon_F", dotted, from=1-2, to=2-2]
      \end{tikzcd}\end{equation}    
      with pointwise exact rows. The dotted arrows are constructed using \Cref{lem:projlift} such that the diagram is commutative. 
      By picking a pointwise epimorphism $\varepsilon \colon \cov{Z} \to F$ we can modify this into a commutative diagram
      \[\begin{tikzcd}[ampersand replacement=\&]
        \&[.3em] {\cov{ Y_{0} \oplus Z}} \&[3.2em] {\cov{X_{0} \oplus Z} } \& {H} \& 0\\
      	0 \& {F} \& G \& {H} \& 0
      	\arrow[Rightarrow, no head, from=1-4, to=2-4]
      	\arrow["{\covsm{f_{0} & 0 \\ 0 & \id_{Z}}}", from=1-2, to=1-3]
      	\arrow["\alpha", from=2-2, to=2-3]
      	\arrow["\sm{\varepsilon_F & \varepsilon}", from=1-2, to=2-2]
      	\arrow["\beta", from=2-3, to=2-4]
      	\arrow[from=2-1, to=2-2]
      	\arrow[from=2-4, to=2-5]
      	\arrow[from=1-3, to=1-4]
         \arrow[from=1-4, to=1-5]
      	\arrow["\sm{\varepsilon_G & \alpha \varepsilon}", from=1-3, to=2-3]
      \end{tikzcd}\]
      where $\sm{\varepsilon_G & \alpha \varepsilon}$ is a pointwise epimorphism by applying the $4$-lemma in $\Ab$ pointwise.
      Thus we may assume without loss of generality that $\varepsilon_F$ and $\varepsilon_G$ in \eqref{eq:covers} are already pointwise epimorphisms.

      As $F \in \mod \cC$ we may pick an object $Y_{1} \in \cC$ and a $\cC$-morphism $\cov{d^Y_1} \colon \cov{Y_1} \to \cov{Y_{0}}$ such that $\im \cov{d^Y_1} = \ker \smash{\varepsilon_F}$.
      It follows from pointwise evaluation and the snake lemma in $\Ab$ that the through $\smash{\cov{f_{0}}}$ induced $\cC$-morphism $\ker \varepsilon_F \to \ker \varepsilon_G$ is a pointwise epimorphism and thus that $\im \cov{f_{0} d^Y_{1}} = \ker \varepsilon_G$.
      In particular if we define $X_1 \coloneqq Y_1$ and $d^X_{1} \coloneqq f_{0} d^Y_{1}$ we have a commutative diagram
      \[\begin{tikzcd}[ampersand replacement=\&]
      	  \& {\cov{Y_{1}}} \&[.7em] {\cov{X_{1}}} \\
      	  \& {\cov{ Y_{0}}} \& {\cov{X_{0}} } \& {H} \& 0 \\
      	0 \& {F} \& G \& {H} \& 0 \\
      	  \& 0 \& 0
      	\arrow[Rightarrow, no head, from=2-4, to=3-4]
      	\arrow["{\cov{f_{0}}}", from=2-2, to=2-3]
      	\arrow["\alpha", from=3-2, to=3-3]
      	\arrow["{\varepsilon_G}", from=2-3, to=3-3]
      	\arrow["\beta", from=3-3, to=3-4]
      	\arrow[from=3-1, to=3-2]
      	\arrow[from=3-4, to=3-5]
      	\arrow[from=2-3, to=2-4]
      	\arrow[from=2-4, to=2-5]
      	\arrow["{\varepsilon_F}", from=2-2, to=3-2]
      	\arrow["{\cov{d^Y_1}}"', from=1-2, to=2-2]
      	\arrow[Rightarrow, no head, from=1-2, to=1-3]
      	\arrow["{\cov{d^X_1}}", from=1-3, to=2-3]
      	\arrow[from=3-2, to=4-2]
      	\arrow[from=3-3, to=4-3]
      \end{tikzcd}\]
      with exact rows and columns.
      As $F$ and $H$ are in $\sF$, both morphisms $d^Y_1$ and $f_{0}$ are $\sX$-deflations and so is $d^X_1 = f_{0} d^Y_1$ by \ref{EA1op}.
      This shows $G \in \sF$.

      Conversely, let $\sF$ be closed under extensions in $\mod \cC$ and suppose that we are given $\sX$-deflations $d^X_1 \colon X_1 \to X_{0}$ and $d^Y_{1} \colon Y_{1} \to Y_{0}$ with $X_{0} = Y_1$.
      By \Cref{lem:rsnake} there is a pointwise exact sequence
      \begin{equation}\label{eq:sesextseq}\scalebox{.9}{\begin{tikzcd}[ampersand replacement=\&, column sep = small]
         0 \& {{\cok \cov{d^X_1}}/\im \cokm{\hspace{.025em}\cov{d^X_1}} \kerm{\cov{d^Y_1}}} \& {\cok \cov{d^Y_1 d^X_1}} \& {\cok \cov{d^Y_1}} \& 0
	      \arrow[from=1-1, to=1-2]
	      \arrow[from=1-2, to=1-3]
	      \arrow[from=1-3, to=1-4]
	      \arrow[from=1-4, to=1-5]
      \end{tikzcd}}\end{equation}
      of $\cC$-modules. Moreover, $\ker \cov{d^Y_1}$ is in $\mod \cC$ by \Cref{lem:syzygy}, as $\cok \cov{d^Y_1} \in \mod \cC$.
      Thus the leftmost term of \eqref{eq:sesextseq} is also in $\sF$ using \Cref{lem:pbcl} and the rightmost term of \eqref{eq:sesextseq} is in $\sF$ by assumption.
      Hence, $\cok \cov{d^Y_1 d^X_1} \in \sF$ and thus $d^Y_1 d^X_1$ is an $\sX$-deflation.
   \end{proof}

   To obtain a characterisation of the axioms \ref{EA1} and \ref{EA2} we use the following natural duality $\exn \cC \xleftrightarrow{\smash{\,\sim\,}} \exn \cC^{\op}$.
   Recall the vital definitions from \Cref{lem:op,lem:resolve}.

   \begin{lemma}\label{lem:duality}
    Let $\trnc$ denote the duality $\PCnop \circ \OCn \circ \RCn \colon \exn \cC \to \exn \cC^{\op}$.
      \begin{enumerate}[label={{(\alph*)}}]
         \item We have natural equivalences $\trncop \circ \trnc \cong \Id_{\exnC}$ and $\trnc \circ \trncop \cong \Id_{\exn \cC^{\op}}$.\label{item:inv}
         \item We have $\trnc(-)(=) \cong \Ext^{n+1}_{\mod \cC}(-,\cov{=})$ naturally as functors $(\exnC)^{\op} \times \cC \to \Ab$.\label{item:natiso}
         \item The functor $\trnc$ maps pointwise exact sequences to pointwise exact sequences.\label{item:ptwexact} 
      \end{enumerate}
   \end{lemma}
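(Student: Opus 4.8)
The plan is to establish the three parts of \Cref{lem:duality} in the order \ref{item:natiso}, \ref{item:inv}, \ref{item:ptwexact}, since the natural isomorphism in \ref{item:natiso} is the conceptual heart and the other two follow from it together with general facts about duality and $\Ext$. For \ref{item:natiso}, I would unwind the definition $\trnc = \PCnop \circ \OCn \circ \RCn$. Given $F \in \exnC$, the functor $\RCn$ produces an $n$-exact sequence $X^F_\bullet$ whose Yoneda embedding $\cov{X^F_\bullet}$ is a (minimal-length-allowed) projective resolution of $F$ in $\mod \cC$, by \Cref{lem:resex}. Applying $\OCn$ formally reverses the arrows to give $\OCn(X^F_\bullet) \in \KXnCop$, and then $\PCnop$ sends this to $\cok [d^{X^F}_{n+1}]_{\cC^{\op}}$, i.e.\ the cokernel of the last differential of the reversed complex. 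Evaluating at $Y \in \cC$ and using \Cref{rem:cohomology} (which already records $\Hm_0[\OCn(X_\bullet)]^Y_{\cC^{\op}} = \Hm^{n+1}\con[Y]{X_\bullet} = \cok \con[Y]{d^X_{n+1}}$ together with \Cref{rem:homology}), one identifies $\trnc(F)(Y)$ with $\Hm^{n+1}$ of the cocomplex $\con[Y]{X^F_\bullet} = \Hom_{\mod\cC}(\cov{X^F_\bullet}, \cov{Y})$ obtained by applying $\Hom_{\mod\cC}(-,\cov{Y})$ to the projective resolution of $F$ (using the Yoneda lemma as in the proof of \Cref{lem:nexact}). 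Since $\cov{X^F_\bullet}$ augmented by $\varepsilon_F$ is a genuine projective resolution of $F$ of length $\leq n+1$, this $\Hm^{n+1}$ is by definition $\Ext^{n+1}_{\mod\cC}(F,\cov{Y})$. Naturality in both variables then has to be checked: in $F$ it follows because $\RCn$ sends $\alpha \colon F \to G$ to a chain map lifting $\alpha$, unique up to homotopy, and $\Ext^{n+1}$ is computed by exactly such lifts; in $Y$ it is immediate from functoriality of $\con[-]{X^F_\bullet}$. The one subtlety is well-definedness: $\RCn$ is only defined up to natural equivalence, so I would remark that any two choices give homotopy-equivalent complexes and hence canonically isomorphic $\Ext^{n+1}$, so the natural isomorphism is canonical.

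For \ref{item:inv}, I would deduce $\trncop \circ \trnc \cong \Id_{\exnC}$ from the dual statements already available: $\OCn$ and $\OCnop$ are mutually inverse dualities by \Cref{lem:op}, and $\RCn, \PCn$ (resp.\ $\RCnop, \PCnop$) are mutually quasi-inverse by \Cref{lem:resex}. So
\[
\trncop \circ \trnc = \PCn \circ \OCnop \circ \RCnop \circ \PCnop \circ \OCn \circ \RCn \cong \PCn \circ \OCnop \circ \OCn \circ \RCn \cong \PCn \circ \RCn \cong \Id_{\exnC},
\]
using $\RCnop \circ \PCnop \cong \Id$ for the first simplification and $\OCnop \circ \OCn \cong \Id$ for the second. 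The symmetric computation gives the other identity. Here I would also note $\trnc$ is a well-defined functor (not just defined on objects): since $\RCn$, $\OCn$, $\PCnop$ are all functors, their composite is, and it lands in $\exn\cC^{\op}$ because $\OCn$ sends $n$-exact sequences in $\cC$ to $n$-exact sequences in $\cC^{\op}$ and $\PCnop$ lands in $\exn\cC^{\op}$ by the $\cC^{\op}$-version of \Cref{lem:resolve}.

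For \ref{item:ptwexact}, I would argue that by \ref{item:natiso} it suffices to show that a pointwise short exact sequence $0 \to F \to G \to H \to 0$ in $\exn\cC$ induces a pointwise exact sequence after applying $\Ext^{n+1}_{\mod\cC}(-,\cov{Y})$ for every $Y$. From the long exact $\Ext$-sequence in the exact category $(\mod\cC, \sX)$ (the exact structure of \Cref{cor:exactcat}) we get
\[
\cdots \to \Ext^{n}_{\mod\cC}(F,\cov{Y}) \to \Ext^{n+1}_{\mod\cC}(H,\cov{Y}) \to \Ext^{n+1}_{\mod\cC}(G,\cov{Y}) \to \Ext^{n+1}_{\mod\cC}(F,\cov{Y}) \to \Ext^{n+2}_{\mod\cC}(H,\cov{Y}) \to \cdots
\]
and the two outer terms vanish: $\Ext^{n}_{\mod\cC}(F,\cov{Y}) = 0$ by the defining property of $\exnC$ (\Cref{not:functors}), and $\Ext^{n+2}_{\mod\cC}(H,\cov{Y}) = 0$ because $\pdim_{\mod\cC} H \leq n+1$. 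Hence $0 \to \Ext^{n+1}(H,\cov{Y}) \to \Ext^{n+1}(G,\cov{Y}) \to \Ext^{n+1}(F,\cov{Y}) \to 0$ is exact in $\Ab$, which is exactly pointwise exactness of $0 \to \trnc(H) \to \trnc(G) \to \trnc(F) \to 0$ via the natural isomorphism of \ref{item:natiso} and the fact that $\trnc$ is contravariant. The main obstacle in all of this is purely bookkeeping in \ref{item:natiso}: carefully matching the cokernel appearing in $\PCnop \circ \OCn$ with $\Ext^{n+1}$ computed from the projective resolution, and verifying naturality is compatible with the up-to-homotopy ambiguity in $\RCn$ — there are no deep ideas, but the index-shifting in $\OCn$ and the dualization of Yoneda need to be tracked precisely.
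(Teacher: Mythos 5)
Your proposal is correct and follows essentially the same route as the paper: part \ref{item:natiso} is obtained by identifying $\trnc(F)(Y)$ with $\Hm^{n+1}\con[Y]{\RCn(F)}$ via \Cref{rem:cohomology,rem:homology} and the Yoneda lemma, part \ref{item:inv} by composing the quasi-inverse pairs from \Cref{lem:op,lem:resex}, and part \ref{item:ptwexact} from the long exact $\Ext$-sequence together with the vanishing of $\Ext^{n}_{\mod\cC}(F,\cov{Y})$ and $\Ext^{n+2}_{\mod\cC}(H,\cov{Y})$ for objects of $\exnC$. The only differences are the order of the parts and your added (welcome) care about naturality and the up-to-homotopy ambiguity in $\RCn$.
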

   \begin{proof}
      \Cref{item:inv} follows from \Cref{lem:op,lem:resolve}.
      To show \ref{item:natiso} notice that $\cov{\RCn(F)}$ is a projective resolution of $F \in \exnC$. 
      Thus, for any $Y \in \cC$ we have 
      \begin{align*} \Ext_{\mod \cC}^{n+1}(F, \cov{Y}) &\cong \Hm^{n+1} \Hom_{\mod \cC}(\cov{\RCn(F)}, \cov{Y}) \tag{by definition of $\Ext^{n+1}_{\mod \cC}$}\\
                                                       &\cong \smash{\Hm^{n+1} \con[Y]{\RCn(F)}}  \tag{by the Yoneda lemma}\\
                                                       &= \smash{(\PCnop \circ \OCn \circ \RCn)(F)(Y)} \tag{by \Cref{rem:cohomology,rem:homology}} \\
                                                       &= \smash{\trnc(F)(Y)} \tag{by definition}
       \end{align*}
       naturally in $F \in (\exn \cC)^{\op}$ and $Y \in \cC$.
       Using that $\Ext_{\mod \cC}^{i}(F,\cov{Y}) = 0$ for all $F \in \exnC$ and $Y \in \cC$, whenever $i = n$ or $i = n+2$, we conclude \ref{item:ptwexact} as a consequence of \ref{item:natiso} and the long exact $\Ext$-sequence.
   \end{proof}

    Notice, if $\cC = \proj A$ where $A$ is a finite dimensional $\kk$-algebra then $\Tr_1^{\cC}$ can be identified with a restriction of the Auslander-Bridger transpose to a subcategory of $\mod A \cong \mod \cC$, where it is a well-defined functor.
    In fact, under the identification $\mod A \cong \mod \cC$ the category $\ex_0\, \cC$ is identified with the category of all finitely generated modules of projective dimension $1$ which have no morphisms into the regular module $A$.
 
    \begin{definition}\label{def:exncop}
        For a subcategory $\sF \subset \ex_n \cC$ which is closed under isomorphism we define the isomorphism closed subcategory $\Po_n^{\cC}(\sF) \coloneqq \trncop(\Pb_n^{\cC^{\op}}( \trnc(\sF))) $ of $\ex_n \cC$. 
    \end{definition}

    The duality $\OCn$ allows us now to characterize the axioms \ref{EA1} and \ref{EA2} similarly to \ref{EA1op} and \ref{EA2op}

    \begin{corollary}\label{cor:poextcl2}
        Let $\sX \subset \KXnC$ be a class closed under isomorphism in $\KXnC$ and define $\sF \coloneqq \PCn(\sX)$. Then
        \begin{enumerate}[label={{(\alph*)}}] 
            \item $\sX$ satisfies the axiom \ref{EA2} if and only if $\Po^{\cC}_n(\sF) = \sF$ and\label{item:parta}
            \item if $\sX$ satisfies \ref{EA2} then $\sX$ satisfies \ref{EA1} if and only if $\sF$ is closed under extensions in $\exn \cC$.
        \end{enumerate}
    \end{corollary}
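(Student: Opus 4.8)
The plan is to reduce both parts to \Cref{lem:pbcl} and \Cref{lem:exclos}, which treat \ref{EA2op} and \ref{EA1op}, by transporting everything along the two dualities available: $\OCn\colon\KXnC\to\KXnCop$ from \Cref{lem:op} and $\trnc\colon\exnC\to\exnCop$ from \Cref{lem:duality}. First I would record the bookkeeping. Since $\OCn$ is an equivalence and $\sX$ is isomorphism closed in $\KXnC$, the class $\OCn(\sX)\subset\KXnCop$ is isomorphism closed in $\KXnCop$; and since $\RCn$ is a quasi-inverse of $\PCn$ by \Cref{lem:resex} and $\sF=\PCn(\sX)$, the essential image $\RCn(\sF)$ equals $\sX$. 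Consequently $\trnc(\sF)=\PCnop(\OCn(\RCn(\sF)))$ coincides, as an isomorphism closed subcategory of $\exnCop$, with $\PCnop(\OCn(\sX))$.

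For part (a) I would apply \Cref{lem:pbcl} to the isomorphism closed class $\OCn(\sX)\subset\KXnCop$ in the idempotent complete category $\cC^{\op}$, whose image under $\PCnop$ is $\trnc(\sF)$: it gives that $\OCn(\sX)$ satisfies \ref{EA2op} in $\cC^{\op}$ if and only if $\trnc(\sF)=\Pb_n^{\cC^{\op}}(\trnc(\sF))$. By \Cref{rem:opclear} the former is equivalent to $\sX$ satisfying \ref{EA2} in $\cC$. Applying the duality $\trncop$, which by \Cref{lem:duality}\ref{item:inv} is mutually inverse to $\trnc$, one sees that $\trnc(\sF)=\Pb_n^{\cC^{\op}}(\trnc(\sF))$ holds if and only if $\sF=\trncop(\Pb_n^{\cC^{\op}}(\trnc(\sF)))$, and the right-hand side is $\Po_n^{\cC}(\sF)$ by \Cref{def:exncop}. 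Chaining these equivalences proves part (a).

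For part (b), assume $\sX$ satisfies \ref{EA2}, equivalently $\OCn(\sX)$ satisfies \ref{EA2op} in $\cC^{\op}$. Then \Cref{lem:exclos}, applied to $\OCn(\sX)$ in $\cC^{\op}$, shows that $\OCn(\sX)$ satisfies \ref{EA1op} if and only if $\PCnop(\OCn(\sX))=\trnc(\sF)$ is closed under extensions in $\exnCop$. By \Cref{rem:opclear}, \ref{EA1op} for $\OCn(\sX)$ is the same as \ref{EA1} for $\sX$. It remains to check that $\sF$ is extension closed in $\exnC$ precisely when $\trnc(\sF)$ is extension closed in $\exnCop$; this holds because $\trnc$ and $\trncop$ are mutually inverse dualities that each send pointwise short exact sequences to pointwise short exact sequences, by \Cref{lem:duality}\ref{item:ptwexact} applied to $\cC$ and to $\cC^{\op}$, so a short exact sequence $0\to F\to G\to H\to 0$ in $\exnC$ with $F,H\in\sF$ corresponds to $0\to\trnc(H)\to\trnc(G)\to\trnc(F)\to 0$ in $\exnCop$ with $\trnc(F),\trnc(H)\in\trnc(\sF)$, and conversely. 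Combining these equivalences gives part (b). Since the whole argument is a formal transport along dualities, I do not anticipate any genuine obstacle; the only point requiring care is to confirm that all the classes occurring, namely $\OCn(\sX)$, $\trnc(\sF)$ and $\Pb_n^{\cC^{\op}}(\trnc(\sF))$, are isomorphism closed, so that \Cref{lem:pbcl}, \Cref{lem:exclos} and \Cref{def:exncop} apply verbatim with $\cC$ replaced by $\cC^{\op}$.
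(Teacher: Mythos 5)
Your proposal is correct and follows essentially the same route as the paper: both identify $\trnc(\sF)$ with $\PCnop(\OCn(\sX))$, translate \ref{EA2} and \ref{EA1} to \ref{EA2op} and \ref{EA1op} via \Cref{rem:opclear}, and then invoke \Cref{lem:pbcl} and \Cref{lem:exclos} in $\cC^{\op}$, transporting the conclusions back through the exact duality $\trnc$. You merely spell out the bookkeeping (isomorphism closure, $\RCn(\sF)=\sX$, transfer of extension closedness) in more detail than the paper does.
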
  
    \begin{proof}
        Essential images are invariant under auto-equivalences which are natural isomorphic to the identity functor and $\trnc$ is an exact duality.
        If we define $\sG \coloneqq \operatorname{Fun}^{\cC^{\op}}_n(\OCn(\sX))$ then $\Po^{\cC}_n(\sF) = \sF$ holds if and only if $\smash{\Pb^{\cC^{\op}}_n(\sG) = \sG}$ holds and $\sF$ is extension closed in $\exnC$ if and only if $\sG$ is extension closed in $\exnC^{\op}$.
        By \Cref{rem:opclear} the statements of the corollary are consequences of \Cref{lem:pbcl} and \Cref{lem:exclos}, respectively.
    \end{proof}
 
    The following remark in the case $n=1$ is due to Keller, cf.\s\ e.g.\s\ \cite[Appendix A]{Kel90}.
   \begin{remark}
      The axioms \ref{EA1} or \ref{EA1op} in \Cref{def:nexact} imply each other under assumption of the remaining axioms.
      In fact these axioms are then both equivalent to $\PCn(\sX)$ being closed under extensions in $\exn \cC$ by \Cref{lem:exclos} and \Cref{cor:poextcl2}.
   \end{remark}

   \begin{theorem}\label{thm:main}
      For every idempotent complete additive category $\cC$ there is an inclusion preserving one-to-one correspondence 
      \begin{align*}
         \left\{\parbox{13.2em}{\centering $n$-exact structures $(\cC,\sX)$ on $\cC$}\right\} &\xleftrightarrow{\text{\emph{1:1}}} {\left\{\parbox{18em}{\centering extension closed subcategories $\sF \subset \exn \cC$ such that $\Pb_n^{\cC}(\sF) = \sF$ and $\Po_n^{\cC}(\sF) = \sF$}\right\}}, \\
         (\cC,\sX) &\longmapsto \PCn(\sX), \\
         (\cC, \RCn(\sF)) &\longmapsfrom \sF.
      \end{align*}
   \end{theorem}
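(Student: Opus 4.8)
The plan is to assemble the lemmas of this subsection: almost all of the content has already been extracted into \Cref{lem:resolve}, \Cref{lem:resex}, \Cref{lem:pbcl}, \Cref{lem:exclos} and \Cref{cor:poextcl2}, so what remains is to match each axiom of \Cref{def:nexact} with the corresponding stability condition and to check that the two assignments are mutually inverse. Throughout, for an isomorphism closed class $\sX \subset \KXnC$ of $n$-exact sequences write $\sF := \PCn(\sX)$, which is an isomorphism closed subcategory of $\exn \cC$ by \Cref{lem:resolve} (its objects lie in $\exn \cC$ by \Cref{lem:nexact}).

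First I would treat the forward direction. Suppose $(\cC,\sX)$ is an $n$-exact structure; by \Cref{thm:thesame} we may use the axioms of \Cref{def:nexact}. Then \ref{EA2op} gives $\Pb^{\cC}_n(\sF) = \sF$ by \Cref{lem:pbcl}; \ref{EA2} gives $\Po^{\cC}_n(\sF) = \sF$ by \Cref{cor:poextcl2}\,\ref{item:parta}; and, since \ref{EA2op} is now available, \ref{EA1op} gives that $\sF$ is extension closed in $\exn \cC$ by \Cref{lem:exclos}. Hence $\PCn(\sX)$ lies in the right-hand set. (Axiom \ref{EA0} only guarantees $\sX \neq \emptyset$, and the zero complex maps to the zero functor, which lies in every nonempty extension closed subcategory; this is consistent with the correspondence.)

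For the backward direction, let $\sF \subset \exn \cC$ be an isomorphism closed, extension closed subcategory with $\Pb^{\cC}_n(\sF) = \sF$ and $\Po^{\cC}_n(\sF) = \sF$, and set $\sX := \RCn(\sF)$, the essential image. Since $\RCn$ is quasi-inverse to $\PCn$ by \Cref{lem:resex}, $\sX$ is precisely the class of $n$-exact sequences $X_\bullet$ with $\PCn(X_\bullet) \in \sF$; in particular it is independent of the chosen functor $\RCn$, is closed under isomorphism in $\KXnC$, and satisfies $\PCn(\sX) = \sF$. Now I would verify the axioms of \Cref{def:nexact} for $(\cC,\sX)$: \ref{EA0} holds because $0 \in \sF$ (as $\sF$ is nonempty and extension closed), so the zero complex lies in $\sX$; \ref{EA2op} holds by \Cref{lem:pbcl} using $\Pb^{\cC}_n(\sF) = \sF$; \ref{EA2} holds by \Cref{cor:poextcl2}\,\ref{item:parta} using $\Po^{\cC}_n(\sF) = \sF$; \ref{EA1op} holds by \Cref{lem:exclos}, using that $\sF$ is extension closed and \ref{EA2op} has just been established; and \ref{EA1} holds by \Cref{cor:poextcl2}\,(b), using that $\sF$ is extension closed and \ref{EA2} has just been established.

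Finally, the two assignments are mutually inverse and inclusion preserving: $\PCn$ and $\RCn$ are quasi-inverse equivalences $\KXnC \simeq \exn \cC$ by \Cref{lem:resex}, so $\PCn(\RCn(\sF))$ is the isomorphism closure of $\sF$, which equals $\sF$, and $\RCn(\PCn(\sX))$ is the isomorphism closure of $\sX$ in $\KXnC$, which equals $\sX$; since both sides of the stated correspondence consist of isomorphism closed classes, these are genuine equalities. Inclusion preservation in both directions is then immediate, as an equivalence preserves and reflects containments of essential images. I expect the only delicate points to be the bookkeeping around \ref{EA0} versus nonemptiness and $0 \in \sF$, and the observation that the quasi-inverse statements of \Cref{lem:resex} upgrade to honest equalities once one restricts to isomorphism closed classes; everything else is a direct application of \Cref{lem:pbcl}, \Cref{lem:exclos} and \Cref{cor:poextcl2}.
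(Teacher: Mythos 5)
Your proposal is correct and follows essentially the same route as the paper, which likewise reduces the theorem to \Cref{lem:pbcl}, \Cref{lem:exclos} and \Cref{cor:poextcl2} after observing that the quasi-inverse pair $\PCn$, $\RCn$ induces an inclusion preserving bijection between isomorphism closed classes in $\KXnC$ and in $\exn\cC$. The only quibble is your parenthetical claim that nonemptiness plus extension closure forces $0 \in \sF$ (extension closure alone does not produce the zero object); this is a convention about subcategories of additive categories containing $0$ that the paper also leaves implicit, not a genuine gap.
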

    \begin{proof}
        As $\PCn$ and $\RCn$ are quasi-inverse to each other, they induce inclusion preserving bijections between isomorphism closed subclasses of $\exn \cC$ and $\KXnC$.
        The result follows from \Cref{lem:exclos} and \Cref{lem:pbcl,cor:poextcl2}.
    \end{proof}

    \begin{remark}
        As $\sF = \exnC$ corresponds to the class $\sX = \KXnC$ of all $n$-exact sequences it follows that $\cC$ is quasi $n$-abelian if and only if $\Pb^\cC_n(\exn\cC) = \exnC$ and $\Po^\cC_n(\exn\cC) = \exnC$.
    \end{remark}

\subsection{Existence of maximal \texorpdfstring{$n$}{n}-exact structures}

   In this section we show that $\cC$ has a unique maximal $n$-exact structure.
   The following lemma is essential.
   \begin{lemma}\label{lem:pbextclosed}
      Suppose $\sF \subset \ex_n \cC$ is an isomorphism closed subclass. 
      \begin{enumerate}[label={{(\alph*)}}]
         \item If $\sF$ is extension closed in $\mod \cC$ then so is $\Pb^\cC_n(\sF)$.\label{item:PBextclosed}
      \end{enumerate}
      Moreover, let $\sG \subset \sF$ be another class which is closed under isomorphism. Then
      \begin{enumerate}[label={{(\alph*)}}, resume]
         \item the inclusions $\Pb^{\cC}_n(\sG) \subset \Pb^{\cC}_n(\sF) \subset \sF$ hold\label{item:incl}, and
         \item if $\Pb^{\cC}_n (\sF) = \sF$ and $\Po^{\cC}_n(\sG) = \sG$ hold then $\Pb^{\cC}_n(\sG) = \sG$ holds as well.\label{item:pbab}
      \end{enumerate} 
   \end{lemma}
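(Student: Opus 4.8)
The plan is to prove the three parts in order, the only substantial one being \ref{item:PBextclosed}.

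For \ref{item:PBextclosed}, I would start from a pointwise exact sequence $0 \to F' \xrightarrow{\iota} F \xrightarrow{\pi} F'' \to 0$ with $F', F'' \in \Pb^{\cC}_n(\sF)$. Extension closedness of $\sF$ gives $F \in \sF$ (hence $F \in \exn\cC$), so the only thing to check is the defining condition of \Cref{def:exnc} for $F$: for every $G \in \mod\cC$ and every $\alpha \colon G \to F$, both $\im\alpha$ and $F/\im\alpha$ lie in $\sF$. The idea is to split this computation along the given sequence. First, $\pi\alpha \colon G \to F''$ has $\im(\pi\alpha) \in \sF$ and $F''/\im(\pi\alpha) \in \sF$ since $F'' \in \Pb^{\cC}_n(\sF)$; in particular $\im(\pi\alpha) \in \sF \subset \mod\cC$, so $K \coloneqq \ker(\pi\alpha) \in \mod\cC$ by \Cref{lem:syzygy}. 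As $\alpha(K) \subseteq \iota(F')$ pointwise, $\alpha$ restricts to a $\cC$-morphism $\bar\alpha \colon K \to F'$, and $F' \in \Pb^{\cC}_n(\sF)$ forces $\im\bar\alpha \in \sF$ and $F'/\im\bar\alpha \in \sF$. A pointwise chase in $\Ab$ using the isomorphism theorems then identifies $\im\bar\alpha$ with $\im\alpha \cap \iota(F')$ and produces two pointwise exact sequences
\[
   0 \to \im\bar\alpha \to \im\alpha \to \im(\pi\alpha) \to 0,
   \qquad
   0 \to F'/\im\bar\alpha \to F/\im\alpha \to F''/\im(\pi\alpha) \to 0 ,
\]
whose outer terms all lie in $\sF$; extension closedness of $\sF$ then gives $\im\alpha \in \sF$ and $F/\im\alpha \in \sF$, i.e.\ $F \in \Pb^{\cC}_n(\sF)$. (By \Cref{rem:projenough} one may restrict to $G = \cov{X}$, but this is not needed.)

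Part \ref{item:incl} is formal: $\Pb^{\cC}_n(\sF) \subset \sF$ is immediate from \Cref{def:exnc}, and if $F \in \Pb^{\cC}_n(\sG)$ then $F \in \sG \subset \sF$ and every $\alpha \colon G \to F$ has $\im\alpha, F/\im\alpha \in \sG \subset \sF$, so $F \in \Pb^{\cC}_n(\sF)$. For \ref{item:pbab}, I would first unwind \Cref{def:exncop}: applying the duality $\trnc$ and using $\trnc \circ \trncop \cong \Id$ from \Cref{lem:duality}, the hypothesis $\Po^{\cC}_n(\sG) = \sG$ is equivalent to $\Pb_n^{\cC^{\op}}(\trnc(\sG)) = \trnc(\sG)$. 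Now let $F \in \sG$ and $\alpha \colon G \to F$ with $G \in \mod\cC$. Since $F \in \sG \subset \sF = \Pb^{\cC}_n(\sF)$, we already know $\im\alpha, F/\im\alpha \in \sF \subset \exn\cC$, so $0 \to \im\alpha \to F \to F/\im\alpha \to 0$ is a pointwise exact sequence all of whose terms lie in $\exn\cC$ — this is precisely where the hypothesis $\Pb^{\cC}_n(\sF) = \sF$ enters. Applying the exact duality $\trnc$ (\Cref{lem:duality}\ref{item:ptwexact}) yields a pointwise exact sequence $0 \to \trnc(F/\im\alpha) \to \trnc(F) \to \trnc(\im\alpha) \to 0$ in $\exn\cC^{\op}$, exhibiting $\trnc(F/\im\alpha)$ as (up to isomorphism) the image of a $\cC^{\op}$-morphism into $\trnc(F)$ with cokernel $\trnc(\im\alpha)$. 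Since $\trnc(F) \in \trnc(\sG) = \Pb_n^{\cC^{\op}}(\trnc(\sG))$, both $\trnc(F/\im\alpha)$ and $\trnc(\im\alpha)$ lie in $\trnc(\sG)$; applying $\trncop$ and $\trncop \circ \trnc \cong \Id$ gives $F/\im\alpha, \im\alpha \in \sG$. Hence $\sG \subset \Pb^{\cC}_n(\sG)$, and the reverse inclusion is \ref{item:incl} applied with $\sG$ in place of $\sF$.

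I expect the main obstacle to be the bookkeeping in \ref{item:PBextclosed}: picking the auxiliary morphism $\bar\alpha \colon \ker(\pi\alpha) \to F'$ into the subobject correctly, and then verifying by a pointwise diagram chase that $\im\alpha$ and $F/\im\alpha$ decompose as the two short exact sequences displayed above, so that the two applications of extension closedness genuinely fit together. Parts \ref{item:incl} and \ref{item:pbab} are short once \ref{item:PBextclosed} and the exactness of $\trnc$ are available, the only subtlety in \ref{item:pbab} being the observation that $\Pb^{\cC}_n(\sF) = \sF$ is exactly what places $\im\alpha$ and $F/\im\alpha$ in $\exn\cC$ and thereby makes $\trnc$ applicable.
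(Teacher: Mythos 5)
Your proposal is correct and follows essentially the same route as the paper: part \ref{item:PBextclosed} is proved by restricting $\alpha$ to $\ker(\pi\alpha)$ (which is the paper's $\gamma'$ on $\ker \coimm{\beta\gamma}$, with membership in $\mod\cC$ secured by \Cref{lem:syzygy}) and then splicing $\im\alpha$ and $F/\im\alpha$ into the same two short exact sequences the paper uses, while \ref{item:incl} and \ref{item:pbab} are handled exactly as in the paper via the definition and the exact duality $\trnc$. The only cosmetic difference is that you work with arbitrary $G \in \mod\cC$ rather than invoking \Cref{rem:projenough}, which is harmless.
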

   \begin{proof}
      We show \ref{item:PBextclosed}.
      Let $0 \to F \xrightarrow{\alpha} G \xrightarrow{\beta} H \to 0$ be a pointwise exact sequence in $\mod \cC$ with $F$ and $H$ in $\Pb^\cC_n(\sF)$.
      To show $G \in \Pb^{\cC}_n(\sF)$ we only need to show that that $\im \gamma$ and $G/\im \gamma$ are in $\sF$ for any $\cC$-morphism $\gamma \colon \cov{X} \to G$ and any $X \in \cC$, cf.\s\ \Cref{rem:projenough}.
      We can draw the solid arrows of a commutative diagram
      \[\begin{tikzcd}
      	&  &  & 0 \\
      	0 & {\ker \coimm{\beta \gamma}} & \cov{X} & {\im \beta \gamma} & 0 \\
      	0 & F & G & H & 0 \\
      	0 & {\cok \gamma'} & {\cok \gamma} & {\cok \imm{\beta\gamma}} & 0 \\
      	& 0 & 0 & 0
      	\arrow["\alpha", from=3-2, to=3-3]
      	\arrow["\beta", from=3-3, to=3-4]
      	\arrow[from=3-1, to=3-2]
      	\arrow[from=3-4, to=3-5]
      	\arrow["\gamma", from=2-3, to=3-3]
      	\arrow[from=3-3, to=4-3, "\cokm{\gamma}"]
      	\arrow[dotted, from=2-3, to=2-4, "\coimm{\beta\gamma}"]
      	\arrow[dotted, from=2-4, to=2-5]
      	\arrow[dotted, from=2-4, to=3-4, "\imm{\beta\gamma}"]
      	\arrow[dotted, from=2-2, to=2-3, "\kerm{\coimm{\beta \gamma}}"]
      	\arrow[dotted, from=2-2, to=3-2, "\gamma'"]
      	\arrow[dotted, from=4-4, to=5-4]
      	\arrow[from=4-3, to=5-3]
      	\arrow[dotted, from=4-2, to=5-2]
      	\arrow[dotted, from=3-2, to=4-2, "\cokm{\gamma'}"]
      	\arrow[dotted, from=4-2, to=4-3, "\alpha'"]
      	\arrow[dotted, from=4-3, to=4-4, "\beta'"]
      	\arrow[dotted, from=3-4, to=4-4, "\cokm{\imm{\beta\gamma}}"]
      	\arrow[dotted, from=4-4, to=4-5]
      	\arrow[dotted, from=4-1, to=4-2]
      	\arrow[dotted, from=1-4, to=2-4]
      	\arrow[dotted, from=2-1, to=2-2]
      \end{tikzcd}\]
      with pointwise exact rows and columns. 
      Image factorization of $\beta\gamma$ yields a pointwise epimorphism $\coimm{\beta\gamma}$ and a pointwise monomorphism $\imm{\beta\gamma}$.
      Moreover, $H \in \Pb^{\cC}_n(\sF)$ yields that $\im \beta\gamma$ and $\cok \beta\gamma = \cok \imm{\beta\gamma}$ are in $\sF$.
      Forming the kernel of $\coimm{\beta \gamma}$ yields a pointwise monomorphism $\kerm{\coimm{\beta \gamma}}$ and the universal property of the kernel $\alpha \colon F \to G$ yields the morphism $\gamma'$.
      Notice that $\ker{\coimm{\beta\gamma}}$ is in $\mod \cC$, using \Cref{lem:syzygy} and $\im \beta \gamma$ and $\cov{X}$ being in $\mod \cC$.
      As $F \in \Pb^\cC_{n}(\sF)$ we have that $\im \gamma'$ and $\cok \gamma'$ are in $\sF$.
      Now using the universal properties of the cokernel yields the morphisms $\alpha'$ and $\beta'$ making the diagram commutative.
      A pointwise calculation using the snake-lemma in $\Ab$ shows that the lower column in pointwise exact.
      This shows that constructed dotted arrows make the diagram commutative with pointwise exact rows and columns.

      Now notice there is a pointwise exact sequence $0 \to \ker \cokm{\gamma'} \to \ker \cokm{\gamma} \to \ker \cokm{\imm{\beta\gamma}} \to 0$ induced by the above diagram, using the universal property of the kernel and again a pointwise calculation in $\Ab$ involving the snake lemma.
      Moreover, the way we defined kernels and images we have $\ker \cokm{\gamma'} = \im \gamma'$, $\ker \cokm{\gamma} = \im \gamma$ and $\ker \imm{\beta\gamma} = \im \beta\gamma$, using that the columns are pointwise exact.
      As $\sF$ is extension closed we have that $\im \gamma$ and $\cok \gamma$ are in $\sF$ because $\cok \gamma'$, $\cok \imm{\beta\gamma}$, $\im \gamma'$ and $\im \beta \gamma$ are so.
      Thus $G \in \Pb^{\cC}_n(\sF)$.

      \Cref{item:incl} follows immediately from the definition of $\Pb^\cC_n$.
      We proceed with \ref{item:pbab}, so assume that $\Pb^\cC_n(\sF) = \sF$ and $\Po^\cC_n(\sG) = \sG$.
      Let $\gamma \colon \cov{X} \to G$ be a morphism with $G \in \sG$.
      There is a pointwise exact sequence $0 \to \im \gamma \to G \to \cok \gamma \to 0$ with terms in $\sF \subset \exnC$ as $G \in \sG \subset \sF = \Pb^{\cC}_n(\sF)$.
      Therefore we have a pointwise exact sequence 
      \[0 \to \trnc (\cok \gamma) \to \trnc (G) \to  \trnc (\im \gamma)\to 0\] 
      in $\exn \cC^{\op}$ using that $\trnc$ is an exact duality.
      Notice, $\trnc(\sG) = \Pb^{\cC^{\op}}_n(\trnc(\sG))$ is equivalent to $\sG = \Po_n^\cC(\sG)$ and therefore we have $\trnc (\cok \gamma) \in \trnc(\sG)$ and $\trnc(\im \gamma) \in \trnc(\sG)$.
      But then both $\im \gamma$ and $\cok \gamma$ have to be in $\sG$.
    \end{proof}
    
    It follows for example from \Cref{lem:pbextclosed}\ref{item:pbab} and its dual statement that in any quasi $n$-abelian category $\cC$ any isomorphism closed class of $n$-exact sequences $\sX \subset \KXnC$ satisfies \ref{EA2op} if and only if it satisfies \ref{EA2}.

    To show that there is a unique maximal $n$-exact structure on $\cC$ we apply $\Pb^{\cC}_n$ and $\Po^{\cC}_n$ recursively to $\exnC$ to obtain a category that is actually stable under the application of $\Pb^{\cC}_n$ and $\Pb^\cC_n$.
    
    \begin{definition}
        For an isomorphism closed subcategory $\sF \subset \exnC$ and any $i \in \NN$ let 
        \begin{enumerate}[label={{(\alph*)}}]
            \item $(\Pb^\cC_n)^i(\sF)$ be the $i$-fold application of $\Pb^{\cC}_n$ to $\sF$, and 
            \item $(\Pb^{\cC}_n)^\infty(\sF)$ be the intersection $\bigcap_{i \in \NN} (\Pb^{\cC}_n)^i(\sF)$.
        \end{enumerate}
        We define $(\Po^\cC_n)^i(\sF)$ and $(\Pb^{\cC}_n)^\infty(\sF)$ similarly and put $\mathsf{max}_n \cC \coloneqq (\Po^{\cC}_n)^\infty((\Pb^{\cC}_n)^\infty(\exnC) )$.
    \end{definition}

    We show that $\mathsf{max}_n \cC$ yields the unique maximal $n$-exact structure on $\cC$ via \Cref{thm:main}.

   \begin{proposition}\label{prop:actualmain}
      The subcategory $\mathsf{max}_n \cC \subset \exnC$ is extension closed and satisfies $\Pb^{\cC}_n(\mathsf{max}_n \cC) = \mathsf{max}_n \cC$ and  $\Po^{\cC}_n(\mathsf{max}_n \cC) = \mathsf{max}_n \cC$. 
      If an isomorphism closed subcategory $\sF \subset \exnC$ satisfies $\Pb^\cC_n(\sF) = \sF$ and $\Po^{\cC}_n(\sF) = \sF$ then $\sF \subset \mathsf{max}_n \cC$.
   \end{proposition}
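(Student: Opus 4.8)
The plan is to isolate how the operator $\Pb^{\cC}_n$ behaves under iteration and descending intersection, to apply this once over $\cC$ and once --- through the exact duality $\trnc$ --- over $\cC^{\op}$, and finally to glue the two conclusions with \Cref{lem:pbextclosed}. The basic observation, which I would prove first, is this: if $\sH \subset \exnC$ is an isomorphism closed subcategory which is extension closed in $\mod \cC$ (for instance $\sH = \exnC$, extension closed in $\mod \cC$ as recorded after \Cref{not:functors}), then writing $\sH_i \coloneqq (\Pb^{\cC}_n)^i(\sH)$ the first two parts of \Cref{lem:pbextclosed} together with induction on $i$ show that the $\sH_i$ form a descending chain of isomorphism closed subcategories of $\exnC$ each extension closed in $\mod \cC$; as an intersection of extension closed (resp.\ isomorphism closed) subcategories is again such, $(\Pb^{\cC}_n)^\infty(\sH) = \bigcap_i \sH_i$ is an isomorphism closed subcategory of $\exnC$, extension closed in $\mod \cC$, and it is contained in $\sH$.

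The only step that is not pure bookkeeping is that this intersection is a fixed point: $\Pb^{\cC}_n\bigl((\Pb^{\cC}_n)^\infty(\sH)\bigr) = (\Pb^{\cC}_n)^\infty(\sH)$. The inclusion ``$\subset$'' is again \Cref{lem:pbextclosed}\ref{item:incl}. For ``$\supset$'', take $F \in (\Pb^{\cC}_n)^\infty(\sH)$; then $F \in \sH_{i+1} = \Pb^{\cC}_n(\sH_i)$ for every $i$, so by \Cref{def:exnc} (see also \Cref{rem:projenough}) for each $G \in \mod \cC$ and each $\alpha \colon G \to F$ both $\im \alpha$ and $F/\im \alpha$ lie in $\sH_i$ for all $i$, hence in $(\Pb^{\cC}_n)^\infty(\sH)$; since also $F \in (\Pb^{\cC}_n)^\infty(\sH)$, we obtain $F \in \Pb^{\cC}_n\bigl((\Pb^{\cC}_n)^\infty(\sH)\bigr)$. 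Thus $\Pb^{\cC}_n$ commutes with descending intersections of isomorphism closed subcategories, the point being that its defining condition is a conjunction of membership requirements involving no finiteness; I expect this commutation to be the only place where a small argument is needed.

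Applying this with $\sH = \exnC$ over $\cC$ yields that $\sF' \coloneqq (\Pb^{\cC}_n)^\infty(\exnC)$ is an isomorphism closed subcategory of $\exnC$, extension closed in $\mod \cC$, with $\Pb^{\cC}_n(\sF') = \sF'$. Since $\trnc \colon \exnC \to \exn \cC^{\op}$ is a duality carrying pointwise exact sequences to pointwise exact sequences (\Cref{lem:duality}), $\trnc(\sF')$ is an isomorphism closed subcategory of $\exn \cC^{\op}$, extension closed in $\mod \cC^{\op}$; moreover $\trnc \circ \trncop \cong \Id$ gives $(\Po^{\cC}_n)^i(\sF') = \trncop\bigl((\Pb^{\cC^{\op}}_n)^i(\trnc(\sF'))\bigr)$ for all $i$, and therefore $\mathsf{max}_n \cC = (\Po^{\cC}_n)^\infty(\sF') = \trncop\bigl((\Pb^{\cC^{\op}}_n)^\infty(\trnc(\sF'))\bigr)$. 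Applying the observation of the previous two paragraphs over $\cC^{\op}$ to $\sH = \trnc(\sF')$ and transporting the conclusion back through the exact duality $\trncop$ then shows that $\mathsf{max}_n \cC$ is an isomorphism closed subcategory of $\exnC$, extension closed in $\mod \cC$ --- equivalently in $\exnC$, since $\exnC$ is itself extension closed in $\mod \cC$ --- and satisfies $\Po^{\cC}_n(\mathsf{max}_n \cC) = \mathsf{max}_n \cC$. Finally, from $\mathsf{max}_n \cC \subset \sF'$, $\Pb^{\cC}_n(\sF') = \sF'$ and $\Po^{\cC}_n(\mathsf{max}_n \cC) = \mathsf{max}_n \cC$, \Cref{lem:pbextclosed}\ref{item:pbab} applied with $\sF = \sF'$ and $\sG = \mathsf{max}_n \cC$ gives $\Pb^{\cC}_n(\mathsf{max}_n \cC) = \mathsf{max}_n \cC$, which finishes the three stability properties.

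For the maximality statement, let $\sF \subset \exnC$ be an isomorphism closed subcategory with $\Pb^{\cC}_n(\sF) = \sF$ and $\Po^{\cC}_n(\sF) = \sF$. An induction on $i$ using the monotonicity contained in \Cref{lem:pbextclosed}\ref{item:incl} --- from $\sF \subset (\Pb^{\cC}_n)^i(\exnC)$ one gets $\sF = \Pb^{\cC}_n(\sF) \subset \Pb^{\cC}_n\bigl((\Pb^{\cC}_n)^i(\exnC)\bigr) = (\Pb^{\cC}_n)^{i+1}(\exnC)$ --- shows $\sF \subset (\Pb^{\cC}_n)^i(\exnC)$ for all $i$, hence $\sF \subset \sF'$. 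Transporting the same monotonicity through $\trnc$ and running the induction again, now starting from $\sF \subset \sF'$, gives $\sF = \Po^{\cC}_n(\sF) \subset (\Po^{\cC}_n)^i(\sF')$ for all $i$, so $\sF \subset (\Po^{\cC}_n)^\infty(\sF') = \mathsf{max}_n \cC$, as required. Throughout, the only thing to keep aligned is the identification of ``extension closed in $\mod \cC$'' with ``extension closed in $\exnC$'' for subcategories of $\exnC$, which is harmless since $\exnC$ is extension closed in $\mod \cC$.
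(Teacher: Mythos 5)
Your proposal is correct and follows essentially the same route as the paper: the same inductive application of \Cref{lem:pbextclosed}\ref{item:PBextclosed} and \ref{item:incl} to get a descending chain of extension closed subcategories, the same fixed-point argument for the intersection, the same transport through the exact duality $\trnc$ for the $\Po^{\cC}_n$-statements, the same use of \Cref{lem:pbextclosed}\ref{item:pbab} to recover $\Pb^{\cC}_n$-stability of $\mathsf{max}_n \cC$, and the same monotonicity induction for maximality. No gaps.
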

    \begin{proof}
        It follows from inductively applying \Cref{lem:pbextclosed}\ref{item:PBextclosed} that $(\Pb^{\cC}_n)^{i}(\exnC)$ is extension closed in $\exn \cC$ for $i \in \NN$.
        The same is true for $(\Pb^{\cC}_n)^{\infty}(\exnC)$ as the intersection of extension closed subcategories is again extension closed.
        If $F \in (\Pb^{\cC}_n)^{\infty}(\exnC)$ then $F \in (\Pb^{\cC}_n)^{i}(\exnC)$ for all $i \in \NN$.
        If $\alpha \colon G \to F$ is a $\cC$-morphism with $G \in \mod \cC$ then both $\im \alpha$ and $\cok \alpha$ are in $(\Pb^{\cC}_n)^{i-1}(\exnC)$ for all $i \in \NN_{\geq 1}$ and therefore also in $(\Pb^{\cC}_n)^{\infty}(\exnC)$ as the latter is the intersection over a descending chain, see \Cref{lem:pbextclosed}\ref{item:incl}.
        That means that 
        \begin{equation}\label{eq:PBCLOSED}\Pb^{\cC}_n((\Pb^{\cC}_n)^{\infty}(\exnC)) = (\Pb^{\cC}_n)^{\infty}(\exnC)\end{equation}
        holds.
        As $\trnc$ is an exact duality the same arguments applied to $(\Pb^{\cC}_n)^{\infty}(\exnC)$ instead of $\exnC$ show that $\mathsf{max}_n \cC$ is extension closed in $\exnC$ and that $\Po^{\cC}_n(\mathsf{max}_n \cC) = (\mathsf{max}_n \cC)$
        holds.
        It follows from \eqref{eq:PBCLOSED} and \Cref{lem:pbextclosed}\ref{item:pbab} that $\Pb^{\cC}_n(\mathsf{max}_n \cC) = \mathsf{max}_n \cC$. 
        
        If $\sF \subset \exnC$ satisfies $\Pb^{\cC}_n(\sF) = \sF$ then \Cref{lem:pbextclosed}\ref{item:incl} shows that $\sF \subset (\Pb^\cC_n)^i({\exnC})$ for all $i \in \NN$ inductively.
        Thus also $\sF \subset (\Pb^\cC_n)^\infty({\exnC})$ holds in this case.
        The same argument involving $\Po^{\cC}_n$ shows that $\sF \subseteq \mathsf{max}_n \cC$ if additionally $\Po^{\cC}_n(\sF) = \sF$.
   \end{proof}

    \begin{theorem}\label{thm:exmax}
       Every idempotent complete additive category $\cC$ has a unique maximal $n$-exact structure.
       This structure corresponds to $\mathsf{max}_n \cC$ via  \Cref{thm:main}, that is its conflations are given by all $(n+1)$-term complexes $X_\bullet$ whose Yoneda embedding $\cov{X_\bullet}$ is a projective resolution of a functor in $\mathsf{max}_n \cC$. 
    \end{theorem}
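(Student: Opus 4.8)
The plan is to deduce this directly from \Cref{thm:main} and \Cref{prop:actualmain}. First I would upgrade the bijection of \Cref{thm:main} to an isomorphism of partially ordered sets: order the $n$-exact structures $(\cC,\sX)$ on $\cC$ by inclusion of the classes $\sX$, and order the extension closed subcategories $\sF \subset \exn \cC$ with $\Pb^{\cC}_n(\sF) = \sF$ and $\Po^{\cC}_n(\sF) = \sF$ by inclusion. Since $\PCn$ and $\RCn$ are quasi-inverse and both preserve inclusions of isomorphism closed subclasses — which is exactly what the proof of \Cref{thm:main} records — the correspondence and its inverse are inclusion preserving. Hence finding the maximal $n$-exact structure is equivalent to finding a greatest element of the second poset.

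Next I would appeal to \Cref{prop:actualmain}. It states that $\mathsf{max}_n \cC$ is extension closed in $\exn \cC$ and satisfies $\Pb^{\cC}_n(\mathsf{max}_n \cC) = \mathsf{max}_n \cC$ and $\Po^{\cC}_n(\mathsf{max}_n \cC) = \mathsf{max}_n \cC$, so $\mathsf{max}_n \cC$ belongs to the second poset, and moreover that any $\sF$ in that poset satisfies $\sF \subset \mathsf{max}_n \cC$. Thus $\mathsf{max}_n \cC$ is the greatest element of the second poset. Transporting along the order isomorphism of the previous paragraph, $(\cC, \RCn(\mathsf{max}_n \cC))$ is an $n$-exact structure whose class of conflations contains that of every $n$-exact structure on $\cC$; in particular it is maximal. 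Uniqueness is the elementary order-theoretic remark that a poset has at most one greatest element and that any maximal element of a poset possessing a greatest element must coincide with it — so $(\cC, \RCn(\mathsf{max}_n \cC))$ is the unique maximal $n$-exact structure on $\cC$.

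Finally, I would unwind the explicit description of the conflations. Since $\mathsf{max}_n \cC$ is isomorphism closed and $\PCn$, $\RCn$ are mutually quasi-inverse, the conflations of $(\cC, \RCn(\mathsf{max}_n \cC))$ are precisely the $n$-exact sequences $X_\bullet$ with $\PCn(X_\bullet) = \cok \cov{d^X_1} \in \mathsf{max}_n \cC$; by the equivalence of \ref{item:projres} and \ref{item:nkernel} in \Cref{lem:nexact}, together with the $\Ext$-vanishing built into the containment $\mathsf{max}_n \cC \subset \exn \cC$, this condition is in turn the same as requiring $\cov{X_\bullet}$ to be a projective resolution in $\mod \cC$ of a functor lying in $\mathsf{max}_n \cC$, which gives the asserted description. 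There is no substantial obstacle in this last theorem itself: the real work sits in \Cref{prop:actualmain} (which in turn rests on \Cref{lem:pbextclosed}), and the only points deserving a word of care are that the correspondence of \Cref{thm:main} is an order isomorphism in both directions and that ``maximal'' in the statement must be bridged to ``greatest'' via the poset fact noted above.
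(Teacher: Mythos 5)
Your proposal is correct and follows essentially the same route as the paper: both rest on the inclusion-preserving bijection of \Cref{thm:main} together with \Cref{prop:actualmain}, the only cosmetic difference being that the paper packages the maximality step as \Cref{cor:better} (a slightly more general statement covering classes satisfying only \ref{EA2} and \ref{EA2op}) whereas you apply the second half of \Cref{prop:actualmain} directly to $\PCn(\sX)$ for an arbitrary $n$-exact structure $\sX$.
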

    \begin{proof}
        There is an $n$-exact structure which corresponds to $\mathsf{max}_n \cC$ by the first part of \Cref{prop:actualmain}.
        As the class of $\sX$-conflations of an $n$-exact category $(\cC,\sX)$ satisfies \ref{EA2} and \ref{EA2op} the maximality of this structure follows from \Cref{cor:better} below.
    \end{proof}

    The following application of \Cref{thm:exmax} is useful when constructing $n$-exact structures containing a certain class of $n$-exact sequences as the axioms \ref{EA1} and \ref{EA1op} seem to be usually harder to verify than \ref{EA2} and \ref{EA2op}.
    \begin{corollary}\label{cor:better}
        Let $\sX \subset \KXnC$ be an isomorphism closed class of $n$-exact sequences satisfying the axioms \ref{EA2} and \ref{EA2op}.
        Then all $\sX$-conflations are also conflations in the maximal $n$-exact structure on $\cC$.
    \end{corollary}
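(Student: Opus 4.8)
The plan is to translate the axioms \ref{EA2} and \ref{EA2op} into the functorial stability conditions appearing in \Cref{thm:main}, to identify the resulting subcategory of $\exnC$ as a subcategory of $\mathsf{max}_n \cC$, and then to transport this inclusion back along the equivalence $\PCn$.

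First I would set $\sF \coloneqq \PCn(\sX)$, which is an isomorphism closed subcategory of $\exnC$ as it is an essential image. Since $\sX$ satisfies \ref{EA2op}, \Cref{lem:pbcl} gives $\Pb^{\cC}_n(\sF) = \sF$; since $\sX$ satisfies \ref{EA2}, \Cref{cor:poextcl2}\ref{item:parta} gives $\Po^{\cC}_n(\sF) = \sF$. Note that we do \emph{not} claim $\sF$ is extension closed — it need not be, since $\sX$ is not assumed to satisfy \ref{EA1} or \ref{EA1op} — and this is fine, because the statement of \Cref{prop:actualmain} that we invoke next only requires the two stability conditions $\Pb^{\cC}_n(\sF)=\sF$ and $\Po^{\cC}_n(\sF)=\sF$, not extension-closedness of $\sF$ itself.

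Next I would apply the last statement of \Cref{prop:actualmain} to conclude $\sF \subset \mathsf{max}_n \cC$. Since $\PCn$ and $\RCn$ are quasi-inverse to each other they restrict to mutually inverse, inclusion preserving bijections between isomorphism closed subclasses of $\KXnC$ and of $\exnC$, so $\sX = \RCn(\PCn(\sX)) = \RCn(\sF) \subset \RCn(\mathsf{max}_n \cC)$. By the first part of \Cref{prop:actualmain} together with \Cref{thm:main}, the pair $(\cC,\RCn(\mathsf{max}_n \cC))$ is an $n$-exact structure — the maximal one — whose conflations are precisely the members of $\RCn(\mathsf{max}_n \cC)$. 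As an $\sX$-conflation is, by \Cref{def:confinfdef}, exactly a complex lying in $\sX$, every $\sX$-conflation therefore lies in $\RCn(\mathsf{max}_n \cC)$, i.e.\ is a conflation of the maximal $n$-exact structure, which is the claim.

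There is essentially no hard step here: the content has been front-loaded into \Cref{lem:pbcl}, \Cref{cor:poextcl2} and \Cref{prop:actualmain}. The only points requiring care are the bookkeeping for the direction of the bijection $\PCn \leftrightarrow \RCn$ on isomorphism closed classes, and avoiding a circular appeal to the maximality assertion of \Cref{thm:exmax} (whose proof cites the present corollary): we use only the explicit construction $\mathsf{max}_n \cC$, the extension-closedness and stability statements of \Cref{prop:actualmain}, and the correspondence of \Cref{thm:main}.
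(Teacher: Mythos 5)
Your proposal is correct and follows essentially the same route as the paper: set $\sF = \PCn(\sX)$, use \Cref{lem:pbcl} and \Cref{cor:poextcl2}\ref{item:parta} to get the two stability conditions, apply the last part of \Cref{prop:actualmain} to get $\sF \subset \mathsf{max}_n \cC$, and transport back via the inclusion preserving correspondence of \Cref{thm:main}. Your explicit remarks that extension-closedness of $\sF$ is not needed and that no circular appeal to \Cref{thm:exmax} occurs are both accurate and match the paper's intent.
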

    \begin{proof}
        By \Cref{lem:pbcl,cor:poextcl2}\ref{item:parta} the subcategory $\sF \coloneqq \PCn(\sX)$ of $\exnC$ satisfies the hypotheses of \Cref{prop:actualmain} and thus $\sF \subset \mathsf{max}_n \cC$.
        The result follows as the one-to-one correspondence in \Cref{thm:main} is inclusion preserving.
    \end{proof}

\begin{acknowledgment}
    I would like to greatly thank my PhD supervisor Peter Jørgensen and my postdoctoral supervisor Steffen König for many helpful comments, discussions and suggestions concerning this paper.
    Moreover, I would like to thank Vitor Gulisz for helpful comments and information regarding his paper.
    Many thanks also to Sondre Kvamme for helpful discussions regarding this paper, in particular for discussions about functor categories and \Cref{def:nexact}. 
\end{acknowledgment}

      \bibliographystyle{alpha}
      
\end{document}